\documentclass[english]{article}
\usepackage[T1]{fontenc}
\usepackage[latin9]{inputenc}
\usepackage{float}
\usepackage{units}
\usepackage{bm}
\usepackage{dsfont}
\usepackage{amsmath}
\usepackage{amsthm}
\usepackage{amssymb}
\usepackage[authoryear,round]{natbib}

\makeatletter

\newcommand{\lyxmathsym}[1]{\ifmmode\begingroup\def\b@ld{bold}
  \text{\ifx\math@version\b@ld\bfseries\fi#1}\endgroup\else#1\fi}

\floatstyle{ruled}
\newfloat{algorithm}{tbp}{loa}
\providecommand{\algorithmname}{Algorithm}
\floatname{algorithm}{\protect\algorithmname}

\theoremstyle{plain}
\newtheorem{thm}{\protect\theoremname}[section]
\theoremstyle{plain}
\newtheorem{assumption}[thm]{\protect\assumptionname}
\theoremstyle{plain}
\newtheorem{lem}[thm]{\protect\lemmaname}
\theoremstyle{remark}
\newtheorem{rem}[thm]{\protect\remarkname}
\theoremstyle{definition}
\newtheorem{example}[thm]{\protect\examplename}

\usepackage{iclr2025_conference,times}

\usepackage{hyperref}
\usepackage{url}
\usepackage{cleveref}

\author{Zijian Liu\thanks{Corresponding author.} ~\& Zhengyuan Zhou\\
Stern School of Business, New York University\\
\texttt{\{zl3067,zzhou\}@stern.nyu.edu}}

\iclrfinalcopy

\makeatother

\usepackage{babel}
\providecommand{\assumptionname}{Assumption}
\providecommand{\examplename}{Example}
\providecommand{\lemmaname}{Lemma}
\providecommand{\remarkname}{Remark}
\providecommand{\theoremname}{Theorem}

\begin{document}
\global\long\def\E{\mathbb{\mathbb{E}}}%
\global\long\def\F{\mathcal{F}}%
\global\long\def\N{\mathbb{\mathbb{N}}}%
\global\long\def\R{\mathbb{R}}%
\global\long\def\B{\mathbb{B}}%
\global\long\def\argmin{\mathbb{\mathrm{argmin}}}%
\global\long\def\prog{\mathbb{\mathrm{prog}}}%
\global\long\def\O{\mathcal{O}}%
\global\long\def\1{\mathds{1}}%
\global\long\def\bx{\boldsymbol{x}}%
\global\long\def\by{\boldsymbol{y}}%
\global\long\def\bz{\boldsymbol{z}}%
\global\long\def\bg{\boldsymbol{g}}%
\global\long\def\bu{\boldsymbol{u}}%
\global\long\def\bm{\boldsymbol{m}}%
\global\long\def\bv{\boldsymbol{v}}%
\global\long\def\bw{\boldsymbol{w}}%
\global\long\def\bxi{\boldsymbol{\xi}}%
\global\long\def\bep{\boldsymbol{\epsilon}}%
\global\long\def\bzero{\boldsymbol{0}}%
\global\long\def\defeq{\triangleq}%
\global\long\def\p{\mathfrak{p}}%
\global\long\def\d{\mathrm{d}}%
\global\long\def\bD{\boldsymbol{D}}%

\title{Nonconvex Stochastic Optimization under Heavy-Tailed Noises: Optimal
Convergence without Gradient Clipping}
\maketitle
\begin{abstract}
Recently, the study of heavy-tailed noises in first-order nonconvex
stochastic optimization has gotten a lot of attention since it was
recognized as a more realistic condition as suggested by many empirical
observations. Specifically, the stochastic noise (the difference between
the stochastic and true gradient) is considered to have only a finite
$\mathfrak{p}$-th moment where $\mathfrak{p}\in\left(1,2\right]$
instead of assuming it always satisfies the classical finite variance
assumption. To deal with this more challenging setting, people have
proposed different algorithms and proved them to converge at an optimal
$\mathcal{O}(T^{\frac{1-\mathfrak{p}}{3\mathfrak{p}-2}})$ rate for
smooth objectives after $T$ iterations. Notably, all these new-designed
algorithms are based on the same technique -- gradient clipping.
Naturally, one may want to know whether the clipping method is a necessary
ingredient and the only way to guarantee convergence under heavy-tailed
noises. In this work, by revisiting the existing Batched Normalized
Stochastic Gradient Descent with Momentum (Batched NSGDM) algorithm,
we provide the first convergence result under heavy-tailed noises
but \textit{without} gradient clipping. Concretely, we prove that
Batched NSGDM can achieve the optimal $\mathcal{O}(T^{\frac{1-\mathfrak{p}}{3\mathfrak{p}-2}})$
rate even under the relaxed smooth condition. More interestingly,
we also establish the first $\mathcal{O}(T^{\frac{1-\mathfrak{p}}{2\mathfrak{p}}})$
convergence rate in the case where the tail index $\mathfrak{p}$
is unknown in advance, which is arguably the common scenario in practice.
\end{abstract}

\section{Introduction}

This paper studies the optimization problem $\min_{\bx\in\R^{d}}F(\bx)$
where $F:\R^{d}\to\R$ is differentiable and could be nonconvex. When
$F$ is smooth (i.e., the gradient of $F$ is Lipschitz), the classical
first-order method, Gradient Descent (GD), is known to converge at
the optimal rate $\O(T^{-\frac{1}{2}})$ to find a stationary point
(i.e., to minimize $\left\Vert \nabla F(\bx)\right\Vert $) \citep{nesterov2018lectures}.
However, a main drawback of GD in the modern view is that it requires
true gradients, which could be computationally burdensome (e.g., large-scale
tasks) or even infeasible to obtain (e.g., streaming data). As such,
a famous variant of GD, Stochastic Gradient Descent (SGD) \citep{robbins1951stochastic},
has become the gold standard and been widely implemented nowadays
due to its lightweight yet efficient computational procedure. Under
the standard finite variance noise condition, i.e., the second moment
of the difference between the stochastic gradient and the true gradient
is bounded, SGD has been proved to converge in the rate of $\O(T^{-\frac{1}{4}})$
\citep{ghadimi2013stochastic}, which is unimprovable if without further
assumptions as indicated by the lower bound \citep{arjevani2023lower}.

Although the finite variance assumption has been widely adopted in
theoretical study (see, e.g., \citet{lan2020first}), it has been
recently recognized as too optimistic in modern machine learning tasks
pointed out by empirical observations \citep{simsekli2019tail,csimcsekli2019heavy,zhang2020adaptive},
which reveal a more realistic setting: the heavy-tailed regime, i.e.,
the stochastic noise only has a finite $\p$-th moment where $\mathfrak{p}\in\left(1,2\right]$.
Such a new assumption brings new challenges in both algorithmic design
and theoretical analysis since SGD unfortunately fails to work and
the prior theory of SGD becomes invalid when $\p<2$ \citep{zhang2020adaptive}.

To resolve the failure of SGD under heavy-tailed noises, Clipped SGD
(or its further variants) has been proposed and shown to converge
in both expectation \citep{zhang2020adaptive} and high probability
\citep{cutkosky2021high,pmlr-v195-liu23c,nguyen2023improved,pmlr-v235-liu24bo}
at a rate $\O(T^{\frac{1-\p}{3\p-2}})$, which is indeed optimal \citep{zhang2020adaptive}.
As suggested by the name, the central tool lying in all these existing
algorithms is gradient clipping, which seems to be a necessary ingredient
against heavy-tailed noises so far. Therefore, we are naturally led
to the following question:
\begin{center}
\textit{Is gradient clipping the only way to guarantee convergence
under heavy-tailed noises? If not, can we find an algorithm that converges
at the optimal rate $\O(T^{\frac{1-\p}{3\p-2}})$ without clipping?}
\par\end{center}

Another important but often omitted issue in previous studies is that
the tail index $\p$ of heavy-tailed noises is always implicitly assumed
to be known and then used to choose the clipping magnitude and the
stepsize (or the momentum parameter) \citep{zhang2020adaptive,cutkosky2021high,pmlr-v195-liu23c,nguyen2023improved,pmlr-v235-liu24bo}.
However, knowing $\p$ exactly or even estimating its approximate
value is a non-trivial task in lots of cases, e.g., the online setting.
Consequently, the existing convergence theory for Clipped SGD immediately
becomes vacuous when no prior information on $\p$ is guaranteed,
which is however arguably the common scenario in practice. The above
discussion thereby leads us to another research question:
\begin{center}
\textit{Does there exist an algorithm that provably converges under
heavy-tailed noises even if the tail index $\p$ is unknown?}
\par\end{center}

This work provides affirmative answers to both of the above questions
by revisiting a well-known technique in optimization: gradient normalization,
which is surprisingly effective even under heavy-tailed noises as
demonstrated by our refined theoretical analysis.

\subsection{Our Contributions}

We study the Batched Normalized Stochastic Gradient Descent with Momentum
(Batched NSGDM) algorithm \citep{pmlr-v119-cutkosky20b} under heavy-tailed
noises and establish several new results:
\begin{itemize}
\item We prove that Batched NSGDM converges in expectation at the optimal
rate $\O(T^{\frac{1-\p}{3\p-2}})$ when noises only have a finite
$\p$-th moment where $\mathfrak{p}\in\left(1,2\right]$, which is
the first convergence result under heavy-tailed noises not requiring
gradient clipping\footnote{In preparing the camera-ready version of the paper, it was brought
to our attention that two independent and concurrent works established
similar results \citep{hubler2024gradient,sun2024gradient}.}.
\item We establish a refined lower complexity result having a more precise
order on problem-dependent parameters (e.g., the noise level $\sigma_{0}$),
which perfectly matches our new convergence theory for Batched NSGDM
further showing the optimality of our analysis.
\item We initiate the study of optimization under heavy-tailed noises with
an unknown tail index $\p$ and provide the first provable rate $\O(T^{\frac{1-\p}{2\p}})$
also achieved by the same Batched NSGDM method indicating the robustness
of gradient normalization against heavy-tailed noises\footnote{\citet{hubler2024gradient} also gave a matched lower bound for Batched
NSGD, i.e., $\beta_{t}\equiv0$ in Algorithm \ref{alg:NSGDM}.}.
\item Our analysis goes beyond the classical smoothness condition and heavy-tailed
noises assumption studied in previous works and is the first to hold
under their generalized counterparts (see Assumptions \ref{assu:smooth}
and \ref{assu:general-heavy-noise}).
\item Our proof is based on a novel expected inequality for the vector-valued
martingale difference sequence, which might be of independent interest.
\end{itemize}

\subsection{Related Work\label{subsec:related-work}}

We focus on the literature on nonconvex problems under heavy-tailed
noises. For recent progress on convex optimization, the reader can
refer to \citet{zhang2022parameter,pmlr-v202-sadiev23a,liu2023stochastic,NEURIPS2023_ca24eb48,puchkin2024breaking,pmlr-v235-gorbunov24a}
for details.

\textbf{Upper bound under heavy-tailed noises. }For smooth objectives,
different works have established the optimal rate $\O(T^{\frac{1-\p}{3\p-2}})$
(up to extra logarithmic factors) for Clipped SGD or its further variants
\citep{zhang2020adaptive,cutkosky2021high,pmlr-v195-liu23c,nguyen2023improved,pmlr-v235-liu24bo},
among which, \citet{zhang2020adaptive} and \citet{nguyen2023improved}
respectively provide the best expected and high-probability bounds
for Clipped SGD as their results do not contain any extra $\O(\log T)$
factor. Notably, \citet{zhang2020adaptive} can recover the standard
$\O(T^{-\frac{1}{2}})$ rate in the noiseless case. In contrast, the
rates by \citet{cutkosky2021high,pmlr-v195-liu23c,nguyen2023improved,pmlr-v235-liu24bo}
are not adaptive to the noise level. In addition, we note that the
results from \citet{cutkosky2021high} also depend on an extra vulnerable
assumption, i.e., the stochastic gradient itself has a finite $\p$-th
moment, which can be easily violated. However, all these works require
a known tail index $\p$ to establish convergence theory, which may
not be realistic in practice.

\textbf{Lower bound under heavy-tailed noises.} As far as we know,
\citet{zhang2020adaptive} is the first and only work that provides
the $\Omega(T^{\frac{1-\p}{3\p-2}})$ lower bound for nonconvex optimization
under the classical smooth assumption and the finite $\p$-th moment
condition on noises, where the proof is based on the tool named probability
zero-chain developed by \citet{arjevani2023lower}. However, we should
remind the reader that the lower bound by \citet{zhang2020adaptive}
is actually proved based on assuming a finite $\p$-th moment on the
stochastic gradient instead of the noise and hence may fail to provide
a correct dependence on problem-dependent parameters like the noise
level $\sigma_{0}$.

In addition, we provide a quick review of the gradient normalization
technique.

\textbf{Gradient normalization.} The normalized gradient method has
a long history and could date back to the pioneering work of \citet{nesterov1984minimization},
which is the first paper to suggest considering normalization in (quasi-)convex
optimization problems and provides a theoretical convergence rate.
Many later works (e.g., \citet{kiwiel2001convergence,hazan2015beyond,levy2016power,nacson2019convergence})
further explore the potential of gradient normalization. In deep learning,
gradient normalization (or its variant) has also gained more and more
attention since it can tackle the gradient explosion/vanish issue
and has been observed to accelerate the training \citep{you2017large,you2019large}.
However, a provable theory still lacks for general nonconvex problems
until \citep{pmlr-v119-cutkosky20b}, who established the first meaningful
bound by adding momentum to the normalized gradient method.

\section{Preliminaries\label{sec:Preliminaries}}

\textbf{Notation.} $\N$ denotes the set of natural numbers (excluding
$0$). $\left[T\right]\defeq\left\{ 1,2,\cdots,T\right\} $ for any
$T\in\N$. $\left\langle \cdot,\cdot\right\rangle $ is the Euclidean
inner product on $\R^{d}$ and $\left\Vert \cdot\right\Vert \defeq\sqrt{\left\langle \cdot,\cdot\right\rangle }$
is the $\ell_{2}$ norm. Given a sequence $r_{t}\in\R,\forall t\in\left[T\right]$,
we use the notation $r_{s:t}\defeq\prod_{\ell=s}^{t}r_{\ell}$ for
any $1\leq s\leq t\le T$ and $r_{s:t}\defeq1$ if $s>t$.

We consider the optimization problem $\min_{\bx\in\R^{d}}F(\bx)$
in this work, where $F:\R^{d}\to\R$ is differentiable and potentially
nonconvex. A computationally tractable goal is to minimize $\left\Vert \nabla F(\bx)\right\Vert $,
which is the focus hereinafter. Our analysis relies on the following
assumptions.
\begin{assumption}
\textbf{\label{assu:lb}Finite Lower Bound.} $F_{*}\defeq\inf_{\bx\in\R^{d}}F(\bx)>-\infty$.
\end{assumption}

\begin{assumption}
\textbf{\label{assu:smooth}Generalized Smoothness.} There exist $L_{0}\geq0$
and $L_{1}\geq0$ such that $\left\Vert \nabla F(\bx)-\nabla F(\by)\right\Vert \leq\left(L_{0}+L_{1}\left\Vert \nabla F(\bx)\right\Vert \right)\left\Vert \bx-\by\right\Vert $
for any $\bx,\by\in\R^{d}$ satisfying $\left\Vert \bx-\by\right\Vert \leq\frac{1}{L_{1}}.$
\end{assumption}

The original definition of generalized smoothness was proposed by
\citet{Zhang2020Why} but required the objective to be twice differentiable.
Here, we instead adopt a weaker version introduced later by \citet{zhang2020improved},
which only needs $F$ to be differentiable. As one can see, Assumption
\ref{assu:smooth} degenerates to the standard $L_{0}$-smoothness
when $L_{1}=0$ and hence is more general. We note that there exist
other versions of generalized smoothness proposed recently and refer
to \citet{pmlr-v202-chen23ar,NEURIPS2023_7e8bb8d1,NEURIPS2023_a3cc5012}
for details.
\begin{assumption}
\textbf{\label{assu:unbias}Unbiased Estimator.} At the $t$-th iteration,
we can access a batch of unbiased gradient estimator $G_{t}\defeq\left\{ \bg_{t}^{1},\cdots,\bg_{t}^{B}\right\} $,
i.e., $\E\left[\bg_{t}^{i}\mid\F_{t-1}\right]=\nabla F(\bx_{t}),\forall i\in\left[B\right]$,
where $B$ is the batch size and $\F_{t}\defeq\sigma(G_{1},\cdots,G_{t})$
denotes the natural filtration. Moreover, we assume that $\bg_{t}^{i},\forall i\in\left[B\right]$
are mutually independent for any fixed $t$.
\end{assumption}

\begin{assumption}
\textbf{\label{assu:general-heavy-noise}Generalized Heavy-Tailed
Noises.} There exist $\p\in\left(1,2\right]$, $\sigma_{0}\geq0$
and $\sigma_{1}\geq0$ such that $\E\left[\left\Vert \bxi_{t}^{i}\right\Vert ^{\p}\mid\F_{t-1}\right]\leq\sigma_{0}^{\p}+\sigma_{1}^{\p}\left\Vert \nabla F(\bx_{t})\right\Vert ^{\p},\forall i\in\left[B\right]$
almost surely where $\bxi_{t}^{i}\defeq\bg_{t}^{i}-\nabla F(\bx_{t})$.
\end{assumption}

We remark that Assumption \ref{assu:general-heavy-noise} is a relaxation
of the traditional heavy-tailed noises assumption (i.e., set $\sigma_{1}=0$)
and is new as far as we know. The reason for proposing this generalized
version is that the finite $\p$-th moment requirement used in prior
works can be violated in certain situations where the new assumption
instead holds. We refer the reader to Example \ref{exa:noises} provided
in Appendix \ref{sec:example} for details. In addition, such a form
of Assumption \ref{assu:general-heavy-noise} may reminisce about
the affine variance condition studied in the existing literature \citep{bottou2018optimization}.
Indeed, this new assumption is inspired by it and can also be viewed
as its extension.

However, to make our work more reader-friendly, we will stick to the
case $\sigma_{1}=0$ (i.e., the classical heavy-tailed noises assumption
used in prior works) in the main text to focus on conveying our high-level
idea and avoid further complicating the analysis. The full version
of our new result considering an arbitrary pair $(\sigma_{0},\sigma_{1})$
in Assumption \ref{assu:general-heavy-noise} is deferred to Appendix
\ref{sec:Full-Theorems}.

To finish this section, we introduce the following smoothness inequality
under Assumption \ref{assu:smooth}, whose proof is omitted and can
be found in, for example, \citet{zhang2020improved}.
\begin{lem}
\label{lem:smooth-ineq}Under Assumption \ref{assu:smooth}, for any
$\bx,\by\in\R^{d}$ satisfying\textup{ $\left\Vert \bx-\by\right\Vert \leq\frac{1}{L_{1}}$},
there is
\[
F(\by)\leq F(\bx)+\langle\nabla F(\bx),\by-\bx\rangle+\frac{L_{0}+L_{1}\left\Vert \nabla F(\bx)\right\Vert }{2}\left\Vert \bx-\by\right\Vert ^{2}.
\]
\end{lem}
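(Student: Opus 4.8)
The plan is to run the classical ``integrate the gradient along the segment'' argument, taking care only that every point we touch stays inside the region $\{\bw\in\R^{d}:\left\Vert \bw-\bx\right\Vert \leq1/L_{1}\}$ on which Assumption \ref{assu:smooth} is stated. Fix $\bx,\by\in\R^{d}$ with $\left\Vert \bx-\by\right\Vert \leq1/L_{1}$ and put $\bx_{t}\defeq\bx+t(\by-\bx)$ for $t\in[0,1]$, so that $\bx_{0}=\bx$ and $\bx_{1}=\by$. Since $\left\Vert \bx_{t}-\bx\right\Vert =t\left\Vert \by-\bx\right\Vert \leq\left\Vert \by-\bx\right\Vert \leq1/L_{1}$ for all $t\in[0,1]$, the whole segment lies in the admissible ball; in particular Assumption \ref{assu:smooth} applies to the pair $(\bx,\bx_{t})$ for every such $t$, which (together with the triangle inequality) also shows that $\left\Vert \nabla F(\bx_{t})\right\Vert $ stays bounded along the segment and that $t\mapsto\nabla F(\bx_{t})$ is continuous.

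Next I would invoke the fundamental theorem of calculus for the $C^{1}$ function $t\mapsto F(\bx_{t})$, whose derivative is $\langle\nabla F(\bx_{t}),\by-\bx\rangle$, to write
\[
F(\by)-F(\bx)-\langle\nabla F(\bx),\by-\bx\rangle=\int_{0}^{1}\langle\nabla F(\bx_{t})-\nabla F(\bx),\by-\bx\rangle\,\d t.
\]
Then I would bound the integrand pointwise by Cauchy--Schwarz followed by Assumption \ref{assu:smooth}:
\[
\langle\nabla F(\bx_{t})-\nabla F(\bx),\by-\bx\rangle\leq\left\Vert \nabla F(\bx_{t})-\nabla F(\bx)\right\Vert \left\Vert \by-\bx\right\Vert \leq(L_{0}+L_{1}\left\Vert \nabla F(\bx)\right\Vert )\,t\left\Vert \by-\bx\right\Vert ^{2},
\]
where the last step uses $\left\Vert \bx_{t}-\bx\right\Vert =t\left\Vert \by-\bx\right\Vert $. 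Integrating over $t\in[0,1]$ and using $\int_{0}^{1}t\,\d t=1/2$ together with $\left\Vert \bx-\by\right\Vert =\left\Vert \by-\bx\right\Vert $ then yields exactly the asserted inequality.

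The only point that genuinely requires attention is the bookkeeping in the first paragraph: that the segment joining $\bx$ and $\by$ never leaves the ball of radius $1/L_{1}$ about $\bx$, so that Assumption \ref{assu:smooth} is actually applicable at each $\bx_{t}$, and that differentiability of $F$ makes the curve $t\mapsto F(\bx_{t})$ smooth enough for the fundamental theorem of calculus. Once these are observed, the rest is a one-line calculus estimate, so I do not anticipate any real obstacle; this is presumably why the authors state the lemma without proof and simply refer to \citet{zhang2020improved}.
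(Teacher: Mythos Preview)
Your argument is correct and is exactly the standard route: parametrize the segment, apply the fundamental theorem of calculus, then Cauchy--Schwarz plus Assumption~\ref{assu:smooth} pointwise, noting that $\left\Vert \bx_{t}-\bx\right\Vert \leq1/L_{1}$ keeps every intermediate point in the admissible ball so the assumption applies with anchor $\bx$. The paper itself omits the proof and simply refers to \citet{zhang2020improved}, where precisely this computation is carried out, so there is nothing further to compare.
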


\section{Convergence without Gradient Clipping\label{sec: algo}}

\begin{algorithm}[H]
\caption{\label{alg:NSGDM}Batched Normalized Stochastic Gradient Descent with
Momentum (Batched NSGDM)}

\textbf{Input:} initial point $\bx_{1}\in\R^{d}$, batch size $B\in\N$,
momentum parameter $\beta_{t}\in\left[0,1\right]$, stepsize $\eta_{t}>0$

\textbf{for} $t=1$ \textbf{to} $T$ \textbf{do}

$\quad$$\bg_{t}=\frac{1}{B}\sum_{i=1}^{B}\bg_{t}^{i}$

$\quad$$\bm{_{t}}=\beta_{t}\bm{_{t-1}}+(1-\beta_{t})\bg_{t}$\hfill{}$\triangleright\text{ where }\bm{_{0}}\defeq\bg_{1}$

$\quad$$\bx_{t+1}=\bx_{t}-\eta_{t}\frac{\bm{_{t}}}{\left\Vert \bm{_{t}}\right\Vert }$\hfill{}$\triangleright\text{ where }\frac{\bzero}{\left\Vert \bzero\right\Vert }\defeq\bzero$

\textbf{end for}
\end{algorithm}

\begin{rem}
Instead of the norm normalization in Algorithm \ref{alg:NSGDM},
we can consider an elementwise normalization rule, i.e., $\bx_{t+1}[i]=\bx_{t}[i]-\eta_{t,i}\frac{\bm{_{t}}[i]}{\left|\bm{_{t}}[i]\right|},\forall i\in\left[d\right]$,
which is known as Batched Sign Stochastic Gradient Descent with Momentum
(Batched SSGDM). By applying the idea introduced in Section \ref{sec:analysis},
one can also establish the convergence of Batched SSGDM under heavy-tailed
noises.
\end{rem}

The method we are interested in, Batched NSGDM \citep{pmlr-v119-cutkosky20b},
is provided above in Algorithm \ref{alg:NSGDM}. Compared to the widely
used Batched SGDM algorithm, the only difference is the extra normalization
step, which we will show has a crucial effect when dealing with heavy-tailed
noises. Since many prior works (e.g., \citet{you2017large,pmlr-v119-cutkosky20b,jin2021non})
have explained how and why Batched NSGDM works (in the finite/affine
variance case), we hence do not repeat the discussion here again.
The reader seeking the intuition behind the algorithm could refer
to, for example, \citet{pmlr-v119-cutkosky20b} for details.

Now we are ready to present our new result for this classical algorithm
under heavy-tailed noises.

\subsection{Convergence with A Known Tail Index $\protect\p$\label{subsec:full}}

In this subsection, we provide the convergence rate of Batched NSGDM
under an ideal situation, i.e., when every problem-dependent parameter
is known, which is commonly assumed implicitly in the optimization
literature.
\begin{thm}
\label{thm:main-fixed-full-info}Under Assumptions \ref{assu:lb},
\ref{assu:smooth}, \ref{assu:unbias} and \ref{assu:general-heavy-noise}
(with $\sigma_{1}=0$), let $\Delta_{1}\defeq F(\bx_{1})-F_{*}$,
then for any $T\in\N$, by taking
\begin{align*}
\beta_{t} & \equiv\beta=1-\min\left\{ 1,\max\left\{ \left(\frac{\Delta_{1}L_{1}+\sigma_{0}}{\sigma_{0}T}\right)^{\frac{\p}{2\p-1}},\left(\frac{\Delta_{1}L_{0}}{\sigma_{0}^{2}T}\right)^{\frac{\p}{3\p-2}}\right\} \right\} ,\\
\eta_{t} & \equiv\eta=\min\left\{ \sqrt{\frac{(1-\beta)\Delta_{1}}{L_{0}T}},\frac{1-\beta}{8L_{1}}\right\} ,\quad B=1,
\end{align*}
Algorithm \ref{alg:NSGDM} guarantees
\[
\frac{1}{T}\sum_{t=1}^{T}\E\left[\left\Vert \nabla F(\bx_{t})\right\Vert \right]=\O\left(\frac{\Delta_{1}L_{1}}{T}+\sqrt{\frac{\Delta_{1}L_{0}}{T}}+\frac{(\Delta_{1}L_{1})^{\frac{\p-1}{2\p-1}}\sigma_{0}^{\frac{\p}{2\p-1}}+\sigma_{0}}{T^{\frac{\p-1}{2\p-1}}}+\frac{(\Delta_{1}L_{0})^{\frac{\p-1}{3\p-2}}\sigma_{0}^{\frac{\p}{3\p-2}}}{T^{\frac{\p-1}{3\p-2}}}\right).
\]
\end{thm}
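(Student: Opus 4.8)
The plan is to push the classical descent analysis of normalized momentum methods through, replacing its usual finite-variance control of the momentum error with a heavy-tailed substitute. Write $\bep_t\defeq\bm_t-\nabla F(\bx_t)$. Since $\|\bx_{t+1}-\bx_t\|=\eta\|\bm_t\|/\|\bm_t\|\le\eta\le\frac{1-\beta}{8L_1}\le\frac{1}{L_1}$, Lemma~\ref{lem:smooth-ineq} applies at every iteration; combined with the elementary bound $\langle\nabla F(\bx_t),\bm_t/\|\bm_t\|\rangle\ge\|\nabla F(\bx_t)\|-2\|\bep_t\|$ (which holds also when $\bm_t=\bzero$ under the stated convention) it gives $F(\bx_{t+1})\le F(\bx_t)-\eta\|\nabla F(\bx_t)\|+2\eta\|\bep_t\|+\frac{L_0+L_1\|\nabla F(\bx_t)\|}{2}\eta^2$. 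Telescoping over $t\in[T]$, using $L_1\eta\le1$ to absorb the $\tfrac{L_1\eta^2}{2}\|\nabla F(\bx_t)\|$ terms into the left-hand side, dividing by $\eta T$ and taking expectations, I obtain
\[
\frac{1}{T}\sum_{t=1}^{T}\E\|\nabla F(\bx_t)\|\lesssim\frac{\Delta_1}{\eta T}+L_0\eta+\frac{1}{T}\sum_{t=1}^{T}\E\|\bep_t\|.
\]

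Next I would dissect $\bep_t$. Unrolling $\bm_t=\beta\bm_{t-1}+(1-\beta)\bg_t$ with $\bm_1=\bg_1$ yields $\bep_t=S_t+D_t$, where $S_t\defeq\sum_{k=1}^{t}a_{t,k}\bxi_k$ collects the stochastic noises $\bxi_k\defeq\bg_k-\nabla F(\bx_k)$ with deterministic weights $a_{t,1}=\beta^{t-1}$, $a_{t,k}=(1-\beta)\beta^{t-k}$ for $k\ge2$, and $D_t\defeq\sum_{k=2}^{t}\beta^{t-k+1}\bigl(\nabla F(\bx_{k-1})-\nabla F(\bx_k)\bigr)$ is the gradient drift. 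For $D_t$, Assumption~\ref{assu:smooth} together with $\|\bx_k-\bx_{k-1}\|=\eta\le\frac{1}{L_1}$ gives $\|\nabla F(\bx_{k-1})-\nabla F(\bx_k)\|\le(L_0+L_1\|\nabla F(\bx_{k-1})\|)\eta$; exchanging the order of summation and bounding the resulting geometric series by $\frac{1}{1-\beta}$ gives $\frac{1}{T}\sum_t\|D_t\|\lesssim\frac{\eta L_0}{1-\beta}+\frac{\eta L_1}{(1-\beta)T}\sum_t\|\nabla F(\bx_t)\|$, and since $\frac{\eta L_1}{1-\beta}\le\frac18$ the last term can again be absorbed into the left-hand side of the first display after the $\bep_t$ term there is expanded.

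The heart of the argument is the bound on $\E\|S_t\|$. Since $\bxi_k$ is $\F_k$-measurable with $\E[\bxi_k\mid\F_{k-1}]=\bzero$ and (here $B=1$, $\sigma_1=0$) $\E[\|\bxi_k\|^{\p}\mid\F_{k-1}]\le\sigma_0^{\p}$, the sequence $\{a_{t,k}\bxi_k\}_{k=1}^{t}$ is an $\R^d$-valued martingale difference sequence having only a finite $\p$-th moment, so the usual $L^2$ martingale identity is unavailable. The substitute I would establish and use is a $\p$-th moment inequality for vector-valued martingale difference sequences of the shape $\E\|\sum_k d_k\|^{\p}\le C_{\p}\sum_k\E\|d_k\|^{\p}$ with an absolute constant $C_{\p}$ (this is the novel technical lemma announced in the introduction); applied here it gives $\E\|S_t\|^{\p}\lesssim\sum_{k=1}^{t}a_{t,k}^{\p}\,\E\|\bxi_k\|^{\p}\le\sigma_0^{\p}\sum_{k=1}^{t}a_{t,k}^{\p}$, and Jensen's inequality ($\p\ge1$) then gives $\E\|S_t\|\le(\E\|S_t\|^{\p})^{1/\p}$. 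A short computation of the geometric sum, using $1-\beta^{\p}\ge1-\beta$, shows $\sum_{k=1}^{t}a_{t,k}^{\p}\le\beta^{\p(t-1)}+(1-\beta)^{\p-1}$, so, averaging over $t$ and using $\sum_t\beta^{t-1}\le\frac{1}{1-\beta}$,
\[
\frac{1}{T}\sum_{t=1}^{T}\E\|S_t\|\lesssim\sigma_0\left(\frac{1}{(1-\beta)T}+(1-\beta)^{\frac{\p-1}{\p}}\right).
\]

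Combining the above estimates (and absorbing $L_0\eta$ into $\frac{\eta L_0}{1-\beta}$ since $1-\beta\le1$) gives
\[
\frac{1}{T}\sum_{t=1}^{T}\E\|\nabla F(\bx_t)\|\lesssim\frac{\Delta_1}{\eta T}+\frac{\eta L_0}{1-\beta}+\frac{\sigma_0}{(1-\beta)T}+\sigma_0(1-\beta)^{\frac{\p-1}{\p}}.
\]
Substituting $\eta=\min\{\sqrt{(1-\beta)\Delta_1/(L_0T)},\,(1-\beta)/(8L_1)\}$ bounds the first two terms by $\frac{\Delta_1L_1}{(1-\beta)T}+\sqrt{\frac{\Delta_1L_0}{(1-\beta)T}}$; then substituting the prescribed $\beta$ — using $1-\beta\ge\bigl(\tfrac{\Delta_1L_1+\sigma_0}{\sigma_0T}\bigr)^{\frac{\p}{2\p-1}}$ and $1-\beta\ge\bigl(\tfrac{\Delta_1L_0}{\sigma_0^2T}\bigr)^{\frac{\p}{3\p-2}}$ on the denominator terms, $1-\beta\le$ the maximum of these two on the last term, and the exponent identities $\tfrac12-\tfrac{\p}{2(3\p-2)}=\tfrac{\p-1}{3\p-2}$ and $1-\tfrac{\p}{2\p-1}=\tfrac{\p-1}{2\p-1}$ — collapses the right-hand side to the four claimed terms, with the truncation $\min\{1,\cdot\}$ in $\beta$ producing the extra $\frac{\Delta_1L_1}{T}$ and $\sqrt{\frac{\Delta_1L_0}{T}}$ through a short case split. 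I expect the main obstacle to be the heavy-tailed martingale inequality of the third paragraph: that is exactly where gradient clipping was used in all prior work, and every later estimate hinges on controlling $\E\|S_t\|$ with only a $\p$-th moment available; the algebra of the final substitution is lengthy but routine.
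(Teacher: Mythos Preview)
Your proposal is correct and follows the paper's overall route---descent inequality via Lemma~\ref{lem:smooth-ineq}, the same decomposition $\bep_t=S_t+D_t$ of the momentum error (your unrolling coincides with Lemma~\ref{lem:main-decomposition} after merging the $\bep_0$ term with the $s=1$ summand of the noise sum), geometric control of the drift $D_t$, absorption of the $L_1$-terms using $\eta L_1/(1-\beta)\le\tfrac18$, and the final parameter substitution, which your exponent identities handle correctly.

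The one genuine difference is the martingale tool you propose for $S_t$. You invoke a $\p$-th moment inequality $\E\bigl\|\sum_k d_k\bigr\|^{\p}\le C_{\p}\sum_k\E\|d_k\|^{\p}$ (a vector-valued von Bahr--Esseen/Burkholder bound for $\p\in(1,2]$) and then pass to the first moment by Jensen. The paper instead proves and uses the first-moment inequality $\E\bigl\|\sum_t\bv_t\bigr\|\le 2\sqrt{2}\,\E\bigl[(\sum_t\|\bv_t\|^{\p})^{1/\p}\bigr]$ (Lemma~\ref{lem:main-core}), established via an AdaGrad regret argument, and then peels conditional expectations layer by layer. For the present statement ($\sigma_1=0$, $B=1$) the two routes yield the same bound $(\sum_k a_{t,k}^{\p})^{1/\p}\sigma_0$ on $\E\|S_t\|$, and your approach is arguably more classical. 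The paper's form, however, keeps the expectation \emph{outside} the $(\cdot)^{1/\p}$, which it explicitly notes is what makes the analysis go through in the generalized setting $\sigma_1>0$ (see the remark after Lemma~\ref{lem:main-core} and the proof of Lemma~\ref{lem:err-bound}); your $\p$-th moment inequality would not directly accommodate that extension since $\E\|\bxi_s\|^{\p}$ then depends on $\E\|\nabla F(\bx_s)\|^{\p}$ rather than $\E\|\nabla F(\bx_s)\|$.
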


Theorem \ref{thm:main-fixed-full-info} states the convergence rate
of Algorithm \ref{alg:NSGDM} under heavy-tailed noises. The full
version, Theorem \ref{thm:fixed-full-info}, that works for any $\sigma_{1}\geq0$
is deferred to Appendix \ref{sec:Full-Theorems}. As a quick sanity
check, Theorem \ref{thm:main-fixed-full-info} reduces to the rate
$\O((\Delta_{1}L_{0}\sigma_{0}^{2}/T)^{\frac{1}{4}})$ when $\p=2$
(i.e., the finite variance case), which is known to be tight not only
in $T$ but also in $\Delta_{1},L_{0}$ and $\sigma_{0}$ \citep{arjevani2023lower}.

We would like to discuss this theorem here further. First and most
importantly, as far as we know, this is the first and only convergence
result for nonconvex stochastic optimization under heavy-tailed noises
but without employing the gradient clipping technique, which is the
central tool for all previous algorithms under the same setting, not
to mention the rate $\O(T^{\frac{1-\p}{3\p-2}})$ is also optimal
in $T$ since it matches the lower bound $\Omega(T^{\frac{1-\p}{3\p-2}})$
proved in \citet{zhang2020adaptive}. In fact, the lower-order term
$\O((\Delta_{1}L_{0}\sigma_{0}^{\frac{\p}{\p-1}}/T)^{\frac{\p-1}{3\p-2}})$
in our rate is also tight in $\Delta_{1},L_{0}$ and $\sigma_{0}$
as indicated by Theorem \ref{thm:main-lower-bound} below, where we
present a refined lower bound by extending the prior result \citep{zhang2020adaptive}.
To the best of our knowledge, we are also the first to obtain a tight
dependence on these parameters compared to the previous best-known
bounds for Clipped SGD that are only optimal in $T$ \citep{zhang2020adaptive,nguyen2023improved}.
The proof of Theorem \ref{thm:main-lower-bound} is given in Appendix
\ref{sec:Full-Theorems} and mostly follows the same way established
in \citet{carmon2020lower,arjevani2023lower,zhang2020adaptive} but
with a simple alteration to ensure that the noise magnitude $\sigma_{0}$
shows up in the correct order.
\begin{thm}
\label{thm:main-lower-bound}For any given $\p\in\left(1,2\right]$,
$\Delta_{1},L_{0},\sigma_{0}>0$, and small enough $\varepsilon>0$,
there exist a function $F$ (depending on the previous parameters)
satisfying Assumptions \ref{assu:lb}, \ref{assu:smooth} (with $L_{1}=0$)
and $F(\bzero)-F_{*}\leq\Delta_{1}$ and a stochastic oracle satisfying
Assumptions \ref{assu:unbias} (with $B=1$) and \ref{assu:general-heavy-noise}
(with $\sigma_{1}=0$) such that any zero-respecting algorithm\footnote{A first-order algorithm is called zero-respecting if it satisfies
$\bx_{t}\in\cup_{s<t}\mathrm{support}(\bg_{s}),\forall t\in\N$. The
reader could refer to Definition 1 in \citet{arjevani2023lower} for
details.} startinig from $\bx_{1}=\bzero$ requires $\Omega(\Delta_{1}L_{0}\sigma_{0}^{\frac{\p}{\p-1}}\varepsilon^{-\frac{3\p-2}{\p-1}})$
iterations to find an $\varepsilon$-stationary point, i.e., $\E\left[\left\Vert \nabla F(\bx)\right\Vert \right]\leq\varepsilon$.
\end{thm}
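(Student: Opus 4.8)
The plan is to reduce the heavy-tailed lower bound to the zero-chain machinery of \citet{carmon2020lower,arjevani2023lower}, exactly along the lines of \citet{zhang2020adaptive}, but to track the role of $\sigma_0$ carefully so that it appears with the correct exponent $\sigma_0^{\p/(\p-1)}$ rather than being absorbed into constants. Concretely, I would start from the standard hard instance $F(\bx) = \lambda \bar F(\bx/\mu)$ built from the Nesterov-type "chain" function $\bar F:\R^{d}\to\R$, where $\bar F$ has the zero-chain property (its gradient at a point activates at most one new coordinate), is smooth after rescaling, and has the property that any zero-respecting algorithm needs $\Omega(d)$ iterations to make the last coordinate nonzero and hence to reach a point with small gradient. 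The scaling parameters $\lambda,\mu$ are chosen so that $F$ has initial suboptimality $\Theta(\Delta_1)$ and smoothness constant $\Theta(L_0)$ (with $L_1=0$), which pins down $\lambda\mu^{2}\sim\Delta_1$ and $\lambda/\mu^{2}\sim L_0$ up to absolute constants, i.e.\ $\lambda\sim\sqrt{\Delta_1 L_0}$ and $\mu\sim(\Delta_1/L_0)^{1/4}$; this is the routine part.

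The novel bookkeeping is in the stochastic oracle. The idea (as in \citet{arjevani2023lower,zhang2020adaptive}) is to make the stochastic gradient reveal the "next" coordinate only with small probability $\rho$: the oracle returns $\bg = \nabla F(\bx) + \bxi$ where $\bxi$ is supported on the single progress-coordinate, equal to $\frac{1}{\rho}$ times the true component of that coordinate with probability $\rho$ and to $-$(that component) otherwise, so that $\E[\bxi\mid\F_{t-1}]=\bzero$. Then on the low-probability event the stochastic gradient in that coordinate has magnitude $\sim c/\rho$ for some constant $c$ determined by $\lambda,\mu$ and the chain function, so $\E[\|\bxi\|^{\p}] \lesssim \rho\cdot(c/\rho)^{\p} = c^{\p}\rho^{1-\p}$. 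To satisfy Assumption \ref{assu:general-heavy-noise} with the prescribed $\sigma_0$ (and $\sigma_1=0$), I choose $\rho$ so that $c^{\p}\rho^{1-\p}\asymp \sigma_0^{\p}$, i.e.\ $\rho\asymp (c/\sigma_0)^{\p/(\p-1)}$. A progress lemma then shows that with this oracle a zero-respecting algorithm activates a new coordinate only every $\Omega(1/\rho)$ iterations in expectation, so reaching an $\varepsilon$-stationary point requires $\Omega(d/\rho)$ iterations; finally $d$ is chosen as the smallest dimension for which the residual gradient of the unactivated chain is still $>\varepsilon$, giving $d \asymp \lambda/(\mu\varepsilon) \asymp \Delta_1 L_0/\varepsilon^{\ldots}$ after substituting the scalings. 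Multiplying $d$ by $1/\rho$ and plugging in $\lambda\sim\sqrt{\Delta_1 L_0}$, $\mu\sim(\Delta_1/L_0)^{1/4}$, $c\sim\lambda/\mu\sim L_0\mu \sim (\Delta_1 L_0^{3})^{1/4}$ (the exact power of $c$ is whatever the chain function dictates) yields, after arithmetic, the claimed $\Omega(\Delta_1 L_0 \sigma_0^{\p/(\p-1)}\varepsilon^{-(3\p-2)/(\p-1)})$.

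The main obstacle I anticipate is not any single hard step but getting every exponent to line up: one must be careful that $c$ — the size of the activated gradient component in the hard instance — carries exactly the right powers of $\lambda$ and $\mu$ so that after the substitution $\rho\asymp(c/\sigma_0)^{\p/(\p-1)}$ and $d\asymp(\text{something})/\varepsilon^{2}$ (the quadratic-in-$\varepsilon$ count from the $\O(T^{-1/2})$-type chain), the combination $d/\rho$ collapses to the stated formula. A secondary subtlety is verifying that the constructed noise genuinely satisfies Assumption \ref{assu:general-heavy-noise} \emph{almost surely} and conditionally on $\F_{t-1}$ (not merely in expectation over everything), and that "small enough $\varepsilon$" is exactly the condition ensuring $\rho\le 1$ and $d\ge 1$ are simultaneously feasible; both are handled by the same constant-chasing. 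Since \citet{zhang2020adaptive} already carried out the $T$-dependence with $\sigma_0$ absorbed, the write-up can cite that construction wholesale and only re-derive the oracle's moment bound and the final substitution with $\sigma_0$ kept explicit.
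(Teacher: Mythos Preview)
Your plan is exactly the paper's: rescale the Carmon--Arjevani chain, attach the Bernoulli progress-revealing oracle with probability $\rho$, tune $\rho$ to the $\p$-th moment budget, and output $d/\rho$. The only issue is that your stated scaling is off in a way that would prevent the arithmetic from closing. For the chain $f_d$ one has $f_d(\bzero)-\inf f_d=\Theta(d)$, not $\Theta(1)$, so with $F(\bx)=\lambda\,\bar F(\bx/\mu)$ the suboptimality constraint reads $\lambda d\lesssim\Delta_1$, \emph{not} $\lambda\mu^2\sim\Delta_1$. Together with smoothness $\lambda/\mu^2\asymp L_0$ and the gradient-scale condition $\lambda/\mu\asymp\varepsilon$ (needed so that an unactivated last coordinate still forces $\|\nabla F\|>\varepsilon$), this yields $\mu\asymp\varepsilon/L_0$, $\lambda\asymp\varepsilon^2/L_0$, $d\asymp\Delta_1 L_0/\varepsilon^2$, and in particular $c\asymp\lambda/\mu\asymp\varepsilon$ rather than $(\Delta_1 L_0^3)^{1/4}$. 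Then $\rho\asymp(\varepsilon/\sigma_0)^{\p/(\p-1)}$ and $d/\rho\asymp\Delta_1 L_0\,\sigma_0^{\p/(\p-1)}\varepsilon^{-(3\p-2)/(\p-1)}$ drops out with no further exponent-chasing. By contrast, your values $\lambda\sim\sqrt{\Delta_1 L_0}$, $\mu\sim(\Delta_1/L_0)^{1/4}$ would force $d\lesssim\Delta_1/\lambda\asymp\sqrt{\Delta_1/L_0}$ independently of $\varepsilon$, which kills the lower bound; and the formula $d\asymp\lambda/(\mu\varepsilon)$ has no basis, since the chain's residual-gradient property holds for every $d$ and it is the suboptimality budget, not $\varepsilon$, that fixes $d$.
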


\begin{rem}
The careful reader may find that Theorem \ref{thm:main-lower-bound}
is established under the classical smooth assumption instead of the
relaxed $(L_{0},L_{1})$-smooth condition. However, noticing that
the function class satisfying the traditional smooth condition is
a subclass of the function set fulfilling generalized smoothness,
this lower bound can thus be directly applied to the setting considered
in Theorem \ref{thm:main-fixed-full-info}.
\end{rem}

\begin{rem}
We also note that, when $\p=2$, Theorem \ref{thm:main-lower-bound}
perfectly matches the existing lower bound $\Omega(\Delta_{1}L_{0}\sigma_{0}^{2}\varepsilon^{-4})$
in the finite variance case \citep{arjevani2023lower}.
\end{rem}

Moreover, we would like to mention that Theorem \ref{thm:main-fixed-full-info}
holds under generalized smoothness (and generalized heavy-tailed noises),
which greatly extends the implication of our result compared to the
previous works \citep{zhang2020adaptive,nguyen2023improved} that
can be only applied to the classical setting. In addition, the reader
may want to ask why we need a batch size $B$ given it is set to $1$.
The reason for considering $B$ is that it plays an important role
when $\sigma_{1}>0$. To be precise, $B$ could possibly be larger
than $1$ to guarantee the convergence when $\sigma_{1}>0$. For details
about how the batch size $B$ works, please refer to Theorem \ref{thm:fixed-full-info}
and its proof. Lastly, we want to point out that Theorem \ref{thm:main-fixed-full-info}
perfectly recovers the fastest rate in the noiseless case. In other
words, when $\sigma_{0}=0$, the rate degenerates to $\O(\Delta_{1}L_{1}/T+\sqrt{\Delta_{1}L_{0}/T})$,
which is the best result for deterministic $(L_{0},L_{1})$-smooth
nonconvex optimization as far as we are aware \citep{liu2023near}. 

Given the above discussion, we believe that our work provides a new
insight on how to deal with heavy-tailed noises, i.e., using normalization,
in stochastic nonconvex optimization problems.

\subsection{Convergence without A Known Tail Index $\protect\p$\label{subsec:partial}}

As pointed out at the beginning of Subsection \ref{subsec:full},
the convergence result shown in Theorem \ref{thm:main-fixed-full-info}
can only hold under full information because the choices of $\beta$
and $\eta$ heavily rely on problem-dependent parameters, which are,
however, hard to estimate in practice. Especially, assuming prior
information on $\p$ is not realistic. As such, we will try to reduce
the dependence on these parameters in this subsection.
\begin{thm}
\label{thm:main-fixed-partial-info}Under Assumptions \ref{assu:lb},
\ref{assu:smooth}, \ref{assu:unbias} and \ref{assu:general-heavy-noise}
(with $\sigma_{1}=0$), let $\Delta_{1}\defeq F(\bx_{1})-F_{*}$,
then for any $T\in\N$, by taking
\begin{eqnarray*}
\beta_{t}\equiv\beta=1-\frac{1}{T^{\frac{1}{2}}}, & \eta_{t}\equiv\eta=\min\left\{ \frac{1}{T^{\frac{3}{4}}},\frac{1}{8L_{1}T^{\frac{1}{2}}}\right\} , & B=1,
\end{eqnarray*}
Algorithm \ref{alg:NSGDM} guarantees
\[
\frac{1}{T}\sum_{t=1}^{T}\E\left[\left\Vert \nabla F(\bx_{t})\right\Vert \right]=\O\left(\frac{\Delta_{1}L_{1}}{\sqrt{T}}+\frac{\Delta_{1}+L_{0}}{T^{\frac{1}{4}}}+\frac{\sigma_{0}}{T^{\frac{\p-1}{2\p}}}\right).
\]
\end{thm}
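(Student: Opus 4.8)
The plan is to derive a ``descent-with-drift'' recursion for $F(\bx_t)$ using the generalized smoothness inequality (Lemma~\ref{lem:smooth-ineq}), then control the accumulated error of the momentum estimate $\bm_t$ relative to the true gradient $\nabla F(\bx_t)$. Write $\bep_t \defeq \bm_t - \nabla F(\bx_t)$. Because the step is normalized, $\left\Vert \bx_{t+1}-\bx_t\right\Vert = \eta \le \frac{1}{8L_1}\cdot\frac{1}{1-\beta}$ is not $\le \frac1{L_1}$ by itself unless we are careful, so first I would check that the stepsize choice makes $\eta \le \frac{1}{L_1}$ so Lemma~\ref{lem:smooth-ineq} applies at every step (here $\eta \le T^{-3/4}$ and $\eta \le \frac{1}{8L_1 T^{1/2}} \le \frac1{L_1}$, fine). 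Plugging the update $\bx_{t+1}=\bx_t-\eta \bm_t/\left\Vert \bm_t\right\Vert$ and using $\langle \nabla F(\bx_t), \bm_t/\left\Vert\bm_t\right\Vert\rangle \ge \left\Vert\nabla F(\bx_t)\right\Vert - 2\left\Vert\bep_t\right\Vert$ (the standard normalized-gradient inner-product bound), one gets
\[
F(\bx_{t+1}) \le F(\bx_t) - \eta\left\Vert\nabla F(\bx_t)\right\Vert + 2\eta\left\Vert\bep_t\right\Vert + \frac{(L_0 + L_1\left\Vert\nabla F(\bx_t)\right\Vert)\eta^2}{2}.
\]
Summing over $t\in[T]$, rearranging, and using $\eta L_1 \le \frac18$ to absorb the $L_1\left\Vert\nabla F(\bx_t)\right\Vert\eta^2/2$ term into the negative $-\eta\left\Vert\nabla F(\bx_t)\right\Vert$ term yields
\[
\frac{1}{T}\sum_{t=1}^T \left\Vert\nabla F(\bx_t)\right\Vert \lesssim \frac{\Delta_1}{\eta T} + L_0\eta + \frac{1}{T}\sum_{t=1}^T \left\Vert\bep_t\right\Vert .
\]

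The core of the argument is bounding $\E\left[\left\Vert\bep_t\right\Vert\right]$. Expanding the momentum recursion, $\bep_t = \beta\bep_{t-1} + \beta(\nabla F(\bx_{t-1}) - \nabla F(\bx_t)) + (1-\beta)\bxi_t$ where $\bxi_t = \bg_t - \nabla F(\bx_t)$ is a martingale-difference term (conditionally mean zero, finite $\p$-th moment). Unrolling gives $\bep_t = \beta^{t-1}\bep_1 + \sum_{s} (\text{bias from gradient drift}) + \sum_{s}(1-\beta)\beta^{t-s}\bxi_s$. The gradient-drift bias is controlled by generalized smoothness: $\left\Vert\nabla F(\bx_{s-1}) - \nabla F(\bx_s)\right\Vert \le (L_0 + L_1\left\Vert\nabla F(\bx_{s-1})\right\Vert)\eta$, and summing the geometric weights turns this into an $O(\eta/(1-\beta))$ contribution plus an $L_1\eta/(1-\beta)$ fraction of the gradient norms (again absorbable). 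The genuinely new piece is the stochastic term: I need $\E\left[\left\Vert \sum_s (1-\beta)\beta^{t-s}\bxi_s\right\Vert\right]$. Under only a finite $\p$-th moment one cannot use the variance/orthogonality trick; instead I would invoke the paper's ``novel expected inequality for vector-valued martingale difference sequences'' advertised in the contributions — morally a vector-valued Burkholder/von~Bahr--Esseen-type bound giving $\E\left[\left\Vert\sum_s a_s\bxi_s\right\Vert\right] \lesssim \left(\sum_s a_s^\p \E\left[\left\Vert\bxi_s\right\Vert^\p\right]\right)^{1/\p}$ up to a constant depending on $\p$. With $a_s = (1-\beta)\beta^{t-s}$ and $\E\left[\left\Vert\bxi_s\right\Vert^\p\right]\le\sigma_0^\p$ (using $B=1$, $\sigma_1=0$), this gives $\E\left[\left\Vert\text{stoch. term}\right\Vert\right] \lesssim \sigma_0 (1-\beta)\left(\sum_s \beta^{\p(t-s)}\right)^{1/\p} \lesssim \sigma_0 (1-\beta)^{1 - 1/\p} = \sigma_0(1-\beta)^{(\p-1)/\p}$.

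Assembling everything: after averaging over $t$ (and handling the transient $\beta^{t-1}\bep_1$ term, which contributes $O(\left\Vert\bep_1\right\Vert/((1-\beta)T)) = O(\sigma_0/((1-\beta)T))$ and is lower order), the bound becomes
\[
\frac{1}{T}\sum_{t=1}^T \E\left[\left\Vert\nabla F(\bx_t)\right\Vert\right] \lesssim \frac{\Delta_1}{\eta T} + L_0\eta + \frac{L_0\eta}{1-\beta} + \sigma_0(1-\beta)^{\frac{\p-1}{\p}} + \frac{\sigma_0}{(1-\beta)T}.
\]
Now substitute $1-\beta = T^{-1/2}$ and $\eta = \min\{T^{-3/4}, (8L_1 T^{1/2})^{-1}\}$. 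The term $\Delta_1/(\eta T)$ splits into $\Delta_1 T^{-1/4}$ and $8L_1\Delta_1 T^{-1/2}$; the term $L_0\eta/(1-\beta) \le L_0 T^{-3/4}\cdot T^{1/2} = L_0 T^{-1/4}$; the stochastic term is $\sigma_0 (T^{-1/2})^{(\p-1)/\p} = \sigma_0 T^{-(\p-1)/(2\p)}$; and $\sigma_0/((1-\beta)T) = \sigma_0 T^{-1/2}$, which is dominated by $\sigma_0 T^{-(\p-1)/(2\p)}$ for $\p\le 2$. Collecting, $\frac1T\sum_t\E\left[\left\Vert\nabla F(\bx_t)\right\Vert\right] = \O(\Delta_1 L_1/\sqrt T + (\Delta_1 + L_0)/T^{1/4} + \sigma_0/T^{(\p-1)/(2\p)})$, which is exactly the claimed rate.

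The main obstacle is the stochastic-term bound: establishing (or citing in the right form) the vector-valued $L_1$-norm moment inequality for martingale differences with only a $\p$-th moment. In the scalar case or under finite variance this is routine (Doob/orthogonality), but for $\p\in(1,2)$ and vector-valued increments one needs a von~Bahr--Esseen / Burkholder-type estimate of the form $\E\left\Vert\sum_s a_s\bxi_s\right\Vert \le C_\p (\sum_s a_s^\p\,\E\left\Vert\bxi_s\right\Vert^\p)^{1/\p}$ with a dimension-free constant $C_\p$; getting the $\ell^\p$-aggregation of the geometric weights right (rather than a cruder $\ell^1$ or $\ell^2$ bound, which would spoil the exponent) is what produces the sharp $(1-\beta)^{(\p-1)/\p}$ factor and hence the $T^{-(\p-1)/(2\p)}$ rate. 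A secondary, more bookkeeping-level obstacle is making all the $L_1$-dependent cross terms (from generalized smoothness appearing both in the descent step and in the gradient-drift bias) genuinely absorbable by the negative $-\eta\left\Vert\nabla F(\bx_t)\right\Vert$ term simultaneously, which forces the precise constant $\eta L_1 \le \tfrac18$ in the stepsize and a careful summation order.
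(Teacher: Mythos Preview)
Your proposal is correct and follows essentially the same route as the paper: the descent inequality via Lemma~\ref{lem:smooth-ineq}, the unrolled decomposition of $\bep_t$ into transient/drift/stochastic parts, the $(1-\beta)^{(\p-1)/\p}$ bound on the stochastic sum, absorption of the $L_1$-cross terms via $\eta\le(1-\beta)/(8L_1)$, and the final parameter substitution all match the paper's argument (Lemmas~\ref{lem:main-descent-lemma}, \ref{lem:main-decomposition}, \ref{lem:main-err-bound} and Theorem~\ref{thm:fixed-partial-info}). The only cosmetic difference is that where you invoke a von~Bahr--Esseen/Burkholder-type inequality with a constant $C_\p$, the paper instead proves the needed bound (Lemma~\ref{lem:main-core}) with the universal constant $2\sqrt{2}$ via an AdaGrad regret argument; for the big-$\O$ statement here either suffices.
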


Theorem \ref{thm:main-fixed-partial-info} shows the first provable
convergence upper bound $\O(T^{\frac{1-\p}{2\p}})$ when the tail
index $\p$ is unknown and, at the same time, tries to relax the dependence
on other parameters. As one can see, the only parameter that we require
now is $L_{1}$. It is noteworthy that, once the time horizon $T$
is large enough to satisfy $T=\Omega(L_{1}^{4})$ (or equivalently,
$L_{1}$ is small enough to satisfy $L_{1}=\O(T^{\frac{1}{4}})$),
we can get rid of $L_{1}$ in $\eta$ and thus do not need any information
on the problem. A remarkable implication is that, in the same classical
smooth case (i.e., when $L_{1}=0$) studied in previous works \citep{zhang2020adaptive,nguyen2023improved},
Batched NSGDM can converge in the rate $\O(T^{\frac{1-\p}{2\p}})$
without knowing any of $\p,\Delta_{1},L_{0},\sigma_{0}$. In contrast,
the choices of the clipping magnitude and the stepsize in Clipped
SGD provided by \citet{zhang2020adaptive,nguyen2023improved} heavily
depend on these parameters.

In addition, there are several points we would like to clarify. First,
one may want to ask whether the requirement of knowing $L_{1}$ can
be totally lifted, which we incline to a positive answer. But to
prevent deviating from the main topic of our paper --- heavy-tailed
noises, we defer the detailed discussion about $L_{1}$ to Appendix
\ref{sec:L1}, in which we will explain the reason and talk about
a possible way to achieve this goal. Another question that we view
important but currently have no answer to is whether the $\O(T^{\frac{1-\p}{3\p-2}})$
rate is still achievable when $\p$ is unknown, which we leave as
an interesting direction to be explored in the future. Moreover, the
reader may find that the rate in Theorem \ref{thm:main-fixed-partial-info}
loses the adaptivity on $\sigma_{0}$ since it can only guarantee
the $\O(T^{-\frac{1}{4}})$ convergence instead of the optimal $\O(T^{-\frac{1}{2}})$
rate in the noiseless case. We remark that this is a common phenomenon
for optimization algorithms when oblivious to the level of noise.
For example, it is well-known that SGD suffers the same issue even
under the finite variance assumption but when $\sigma_{0}$ is unknown.
The last thing we have to mention is that, unfortunately, the good
property of not needing the tail index $\p$ may fail when considering
$\sigma_{1}>0$ in the generalized heavy-tailed assumption. Precisely
speaking, we can only prove that there exists a constant threshold
$\sigma_{1}^{*}>0$ such that $B$ can always be set to $1$ if $\sigma_{1}\leq\sigma_{1}^{*}$
and $B$ has to be chosen based on $\p$ when $\sigma_{1}>\sigma_{1}^{*}$.
The details can be found in Theorem \ref{thm:fixed-partial-info}
in the appendix.

In summary, we exhibit the first convergence result under heavy-tailed
noises when only partial information about the problem is available.
Particularly, under the classical heavy-tailed noises, we show how
to guarantee convergence even if the tail index $\p$ is unknown.

\section{How Gradient Normalization Works\label{sec:analysis}}

In this section, we will explain how gradient normalization works
under heavy-tailed noises by both intuitive discussion and theoretical
analysis. Moreover, to keep the analysis simple and better compare
the difference between Batched NSGDM and Clipped SGD studied previously,
we will focus on the classical $L_{0}$-smooth case (i.e., take $L_{1}=0$
in Assumption \ref{assu:smooth}) under finite $\p$-th moment noises
(i.e., take $\sigma_{1}=0$ in Assumption \ref{assu:general-heavy-noise})
to align with prior works. Due to limited space, the missing proofs
of presented lemmas (and their full version) are deferred in the appendix.

\subsection{What does Gradient Clipping do?}

Before analyzing Algorithm \ref{alg:NSGDM}, let us first recap the
update rule of Clipped SGD:
\begin{equation}
\bx_{t+1}=\bx_{t}-\eta_{t}\hat{\bg}_{t},\quad\hat{\bg}_{t}\defeq\min\left\{ 1,\frac{\tau_{t}}{\left\Vert \bg_{t}\right\Vert }\right\} \bg_{t},\label{eq:clipped-sgd-1}
\end{equation}
where $\hat{\bg}_{t}$ is the clipped gradient and $\tau_{t}>0$ is
known as the clipping magnitude playing a critical role in the convergence.
Especially, (\ref{eq:clipped-sgd-1}) can recover SGD by setting $\tau_{t}=+\infty$.

We provide a simple analysis here to illustrate how gradient clipping
helps Clipped SGD to converge in expectation. First, by the well-known
smoothness inequality, i.e., Lemma \ref{lem:smooth-ineq} with $L_{1}=0$,
we know
\begin{align}
F(\bx_{t+1}) & \leq F(\bx_{t})+\left\langle \nabla F(\bx_{t}),\bx_{t+1}-\bx_{t}\right\rangle +\frac{L_{0}}{2}\left\Vert \bx_{t+1}-\bx_{t}\right\Vert ^{2}\nonumber \\
 & \overset{\eqref{eq:clipped-sgd-1}}{=}F(\bx_{t})-\eta_{t}\left\langle \nabla F(\bx_{t}),\hat{\bg}_{t}\right\rangle +\frac{\eta_{t}^{2}L_{0}}{2}\left\Vert \hat{\bg}_{t}\right\Vert ^{2}\nonumber \\
 & =F(\bx_{t})-\left(\eta_{t}-\frac{\eta_{t}^{2}L_{0}}{2}\right)\left\Vert \nabla F(\bx_{t})\right\Vert ^{2}-(\eta_{t}-\eta_{t}^{2}L_{0})\left\langle \nabla F(\bx_{t}),\bep_{t}\right\rangle +\frac{\eta_{t}^{2}L_{0}}{2}\left\Vert \bep_{t}\right\Vert ^{2},\label{eq:clipped-sgd-2}
\end{align}
where $\bep_{t}\defeq\hat{\bg}_{t}-\nabla F(\bx_{t})$. Following
the existing literature (e.g., \citet{cutkosky2021high,pmlr-v195-liu23c}),
we next decompose $\bep_{t}$ into $\bep_{t}=\bep_{t}^{u}+\bep_{t}^{b}$,
where $\bep_{t}^{u}\defeq\hat{\bg}_{t}-\E\left[\hat{\bg}_{t}\mid\F_{t-1}\right]$
and $\bep_{t}^{b}\defeq\E\left[\hat{\bg}_{t}\mid\F_{t-1}\right]-\nabla F(\bx_{t})$.
By noticing $\E\left[\left\langle \nabla F(\bx_{t}),\bep_{t}^{u}\right\rangle \right]=\E\left[\left\langle \bep_{t}^{b},\bep_{t}^{u}\right\rangle \right]=0$,
we thus have
\begin{align}
\E\left[F(\bx_{t+1})\right]\leq & \E\left[F(\bx_{t})\right]-\left(\eta_{t}-\frac{\eta_{t}^{2}L_{0}}{2}\right)\E\left[\left\Vert \nabla F(\bx_{t})\right\Vert ^{2}\right]\nonumber \\
 & +(\eta_{t}-\eta_{t}^{2}L_{0})\E\left[\left\langle \nabla F(\bx_{t}),-\bep_{t}^{b}\right\rangle \right]+\frac{\eta_{t}^{2}L_{0}}{2}\E\left[\left\Vert \bep_{t}^{u}\right\Vert ^{2}+\left\Vert \bep_{t}^{b}\right\Vert ^{2}\right]\nonumber \\
\overset{\text{if }\eta_{t}\leq\nicefrac{1}{L_{0}}}{\leq} & \E\left[F(\bx_{t})\right]-\frac{\eta_{t}}{2}\E\left[\left\Vert \nabla F(\bx_{t})\right\Vert ^{2}\right]+\frac{\eta_{t}^{2}L_{0}}{2}\E\left[\left\Vert \bep_{t}^{u}\right\Vert ^{2}\right]+\frac{\eta_{t}}{2}\E\left[\left\Vert \bep_{t}^{b}\right\Vert ^{2}\right],\label{eq:clipped-sgd-3}
\end{align}
where the last step is by $\E\left[\left\langle \nabla F(\bx_{t}),-\bep_{t}^{b}\right\rangle \right]\leq\frac{\E\left[\left\Vert \nabla F(\bx_{t})\right\Vert ^{2}\right]+\E\left[\left\Vert \bep_{t}^{b}\right\Vert ^{2}\right]}{2}$
and $\eta_{t}-\eta_{t}^{2}L_{0}\geq0$.
\begin{itemize}
\item Now let us check what will happen for SGD if without clipping, i.e.,
taking $\tau_{t}=+\infty$. In this case, (\ref{eq:clipped-sgd-3})
recovers the classical one-step inequality for SGD by observing $\bep_{t}^{u}=\bxi_{t}\defeq\bg_{t}-\nabla F(\bx_{t})$
and $\bep_{t}^{b}=\bzero$ now, which also reveals why SGD may fail
to converge because $\E\left[\left\Vert \bep_{t}^{u}\right\Vert ^{2}\right]=\E\left[\left\Vert \bxi_{t}\right\Vert ^{2}\right]$
could be $+\infty$ under Assumption \ref{assu:general-heavy-noise}.
\item In contrast, loosely speaking, setting $\tau_{t}<+\infty$ would ensure
that both $\E\left[\left\Vert \bep_{t}^{u}\right\Vert ^{2}\right]$
and $\E\left[\left\Vert \bep_{t}^{b}\right\Vert ^{2}\right]$ are
finite, whose upper bounds could be further controlled by picking
$\tau_{t}$ properly. Finally, Clipped SGD would converge under carefully
designed $\eta_{t}$ and $\tau_{t}$.
\end{itemize}
From the above comparison (which though is not strictly rigorous),
one can intuitively think that the key thing done by gradient clipping
is making the second moment of the error term $\E\left[\left\Vert \bep_{t}\right\Vert ^{2}\right]$
in (\ref{eq:clipped-sgd-2}) be bounded even when the noise $\bxi_{t}$
only has a finite $\p$-th moment.

\subsection{What does Gradient Normalization do?\label{subsec:normalization}}

By the discussion in the previous subsection, we can see that gradient
clipping is used to control the second moment of the error term. A
natural thought could be that we may avoid gradient clipping if the
error term $\left\Vert \bep_{t}\right\Vert ^{2}$ in (\ref{eq:clipped-sgd-2})
is in a lower order, for example, say $\left\Vert \bep_{t}\right\Vert ^{\p}$
which can be bounded in expectation directly without clipping. However,
because $\p$ is not necessarily known, we could aim to decrease the
order of $\left\Vert \bep_{t}\right\Vert $ from $2$ to $1$, i.e.,
the extreme case.

The above thought experiment may immediately help the reader who is
familiar with the optimization literature recall the gradient normalization
technique, in the analysis of which first-order terms always show
up. As such, it is reasonable to expect that Batched NSGDM can converge
under heavy-tailed noises even without knowing $\p$ due to gradient
normalization.

From now on, we start analyzing Algorithm \ref{alg:NSGDM} rigorously
and formally showing how gradient normalization overcomes heavy-tailed
noises.
\begin{lem}
\label{lem:main-descent-lemma}Under Assumptions \ref{assu:lb} and
\ref{assu:smooth} (with $L_{1}=0$), let $\Delta_{1}\defeq F(\bx_{1})-F_{*}$,
then Algorithm \ref{alg:NSGDM} guarantees
\begin{equation}
\sum_{t=1}^{T}\eta_{t}\E\left[\left\Vert \nabla F(\bx_{t})\right\Vert \right]\leq\Delta_{1}+\frac{L_{0}\sum_{t=1}^{T}\eta_{t}^{2}}{2}+\sum_{t=1}^{T}2\eta_{t}\E\left[\left\Vert \bep_{t}\right\Vert \right],\label{eq:main-descent-lemma}
\end{equation}
where $\bep_{t}\defeq\bm{_{t}}-\nabla F(\bx_{t}),\forall t\in\left[T\right]$.
\end{lem}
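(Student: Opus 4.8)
The plan is to carry out the textbook normalized-descent argument along every sample path and then telescope. Since $L_1=0$, the constraint $\|\bx_{t+1}-\bx_t\|\le 1/L_1$ in Lemma~\ref{lem:smooth-ineq} is vacuous, so for each $t$ it gives
\[
F(\bx_{t+1})\le F(\bx_t)+\langle\nabla F(\bx_t),\bx_{t+1}-\bx_t\rangle+\frac{L_0}{2}\|\bx_{t+1}-\bx_t\|^2 .
\]
On the event $\bm_t\ne\bzero$ the update reads $\bx_{t+1}-\bx_t=-\eta_t\bm_t/\|\bm_t\|$, so $\|\bx_{t+1}-\bx_t\|=\eta_t$ and the linear term equals $-\eta_t\langle\nabla F(\bx_t),\bm_t/\|\bm_t\|\rangle$, turning the right-hand side into $F(\bx_t)-\eta_t\langle\nabla F(\bx_t),\bm_t/\|\bm_t\|\rangle+L_0\eta_t^2/2$.

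The one substantive step is to lower bound $\langle\nabla F(\bx_t),\bm_t/\|\bm_t\|\rangle$ by $\|\nabla F(\bx_t)\|$ minus a first-order error. Writing $\nabla F(\bx_t)=\bm_t-\bep_t$ and using Cauchy--Schwarz followed by the reverse triangle inequality,
\[
\left\langle\nabla F(\bx_t),\frac{\bm_t}{\|\bm_t\|}\right\rangle=\|\bm_t\|-\left\langle\bep_t,\frac{\bm_t}{\|\bm_t\|}\right\rangle\ge\|\bm_t\|-\|\bep_t\|\ge\|\nabla F(\bx_t)\|-2\|\bep_t\| .
\]
Substituting this into the smoothness bound and rearranging yields the per-step estimate $\eta_t\|\nabla F(\bx_t)\|\le F(\bx_t)-F(\bx_{t+1})+2\eta_t\|\bep_t\|+L_0\eta_t^2/2$.

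It remains to handle the degenerate event and to assemble the sum. On $\{\bm_t=\bzero\}$ the convention $\bzero/\|\bzero\|\defeq\bzero$ forces $\bx_{t+1}=\bx_t$ and $\bep_t=-\nabla F(\bx_t)$, so the right-hand side of the per-step estimate equals $2\eta_t\|\nabla F(\bx_t)\|+L_0\eta_t^2/2\ge\eta_t\|\nabla F(\bx_t)\|$ and the estimate again holds; hence it holds on every sample path. Summing over $t\in[T]$, telescoping the differences $F(\bx_t)-F(\bx_{t+1})$, and using $F(\bx_{T+1})\ge F_*$ gives $\sum_{t=1}^T\eta_t\|\nabla F(\bx_t)\|\le\Delta_1+\frac{L_0}{2}\sum_{t=1}^T\eta_t^2+\sum_{t=1}^T2\eta_t\|\bep_t\|$ pathwise; taking expectations (the $\eta_t$ are deterministic and $\bx_1$ is fixed) gives \eqref{eq:main-descent-lemma}. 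There is no genuine obstacle in this lemma: its content is exactly the first-order nature of the normalized step, and the real work of the paper --- bounding $\E[\|\bep_t\|]$ for the momentum error under only a finite $\p$-th moment --- is postponed to the subsequent lemmas.
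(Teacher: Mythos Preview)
Your argument is correct and essentially identical to the paper's: both apply the smoothness inequality, rewrite $\langle\nabla F(\bx_t),\bm_t/\|\bm_t\|\rangle=\|\bm_t\|-\langle\bep_t,\bm_t/\|\bm_t\|\rangle$, bound the cross term by $\|\bep_t\|$ via Cauchy--Schwarz, apply $\|\bm_t\|\ge\|\nabla F(\bx_t)\|-\|\bep_t\|$, then telescope and take expectations. Your explicit treatment of the $\bm_t=\bzero$ case is a small addition the paper leaves implicit, but otherwise the proofs coincide.
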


Lemma \ref{lem:main-descent-lemma} provides a basic inequality used
in the analysis of Batched NSGDM. We remark that this inequality is
not new and has been proved many times before, even when $L_{1}>0$
(e.g., see \citet{pmlr-v119-cutkosky20b,jin2021non}). But for completeness,
the proof is provided in Appendix \ref{sec:Full-Theorems}.

As mentioned earlier, the first moment $\E\left[\left\Vert \bep_{t}\right\Vert \right]$
is exactly what we want, which we hope could be finite even under
Assumption \ref{assu:general-heavy-noise}. To bound this term, we
first recall the following widely used decomposition when studying
gradient normalization \citep{pmlr-v119-cutkosky20b,cutkosky2021high,jin2021non,pmlr-v195-liu23c}.
\begin{lem}
\label{lem:main-decomposition}Algorithm \ref{alg:NSGDM} guarantees
\begin{equation}
\bep_{t}=\beta_{1:t}\bep_{0}+\sum_{s=1}^{t}\beta_{s:t}\bD_{s}+\sum_{s=1}^{t}(1-\beta_{s})\beta_{s+1:t}\bxi_{s},\forall t\in\left[T\right],\label{eq:main-decomposition}
\end{equation}
where 
\begin{eqnarray*}
\bep_{0}\triangleq\bg_{1}-\nabla F(\bx_{1}), & \bD_{t}\defeq\begin{cases}
\nabla F(\bx_{t-1})-\nabla F(\bx_{t}) & 2\leq t\leq T\\
\bzero & t=1
\end{cases}, & \bxi_{t}\defeq\bg_{t}-\nabla F(\bx_{t}).
\end{eqnarray*}
\end{lem}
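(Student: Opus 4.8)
The plan is to prove \eqref{eq:main-decomposition} by first extracting a one-step recursion for $\bep_{t}$ directly from the momentum update, and then unrolling that recursion by induction on $t$. Everything here is elementary algebra; the only real work is keeping the telescoping-product indices straight.

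First I would rewrite the update $\bm_{t}=\beta_{t}\bm_{t-1}+(1-\beta_{t})\bg_{t}$ in terms of the error $\bep_{t}\defeq\bm_{t}-\nabla F(\bx_{t})$. Substituting $\bg_{t}=\nabla F(\bx_{t})+\bxi_{t}$ and $\bm_{t-1}=\bep_{t-1}+\nabla F(\bx_{t-1})$ and cancelling the common $\nabla F(\bx_{t})$ terms gives
\[
\bep_{t}=\beta_{t}\bep_{t-1}+\beta_{t}\bigl(\nabla F(\bx_{t-1})-\nabla F(\bx_{t})\bigr)+(1-\beta_{t})\bxi_{t}=\beta_{t}\bep_{t-1}+\beta_{t}\bD_{t}+(1-\beta_{t})\bxi_{t}
\]
for every $t\in[T]$. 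For $t=1$ the convention $\bm_{0}\defeq\bg_{1}$ makes $\bep_{0}=\bg_{1}-\nabla F(\bx_{1})=\bxi_{1}$ and $\bD_{1}=\bzero$, so the recursion still holds there (and is consistent with $\bep_{1}=\bxi_{1}$, which one can check directly).

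Then I would unroll this affine recursion: a routine induction shows that iterating it $t$ times produces the factor $\beta_{1:t}$ in front of $\bep_{0}$ and leaves $\sum_{s=1}^{t}\beta_{s+1:t}\bigl(\beta_{s}\bD_{s}+(1-\beta_{s})\bxi_{s}\bigr)$, which becomes exactly the claimed identity after absorbing $\beta_{s}$ into the product via $\beta_{s}\beta_{s+1:t}=\beta_{s:t}$. I do not expect any analytic obstacle; the only thing requiring care is the bookkeeping in the product notation $\beta_{s:t}=\prod_{\ell=s}^{t}\beta_{\ell}$ together with the boundary conventions $\beta_{t+1:t}=1$, $\bD_{1}=\bzero$, and $\bm_{0}=\bg_{1}$, so that the initialization contributes the single term $\beta_{1:t}\bep_{0}$ and no spurious "$\bxi_{0}$" term appears.
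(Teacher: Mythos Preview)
Your proposal is correct and matches the paper's own proof essentially line for line: the paper also derives the one-step recursion $\bep_{t}=\beta_{t}\bep_{t-1}+\beta_{t}\bD_{t}+(1-\beta_{t})\bxi_{t}$, verifies it at $t=1$ via $\bm_{0}=\bg_{1}$, and then unrolls recursively. There is nothing to add.
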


\begin{proof}
By the definition of $\bep_{t}$ when $t\geq2$,
\begin{align*}
\bep_{t} & =\bm{_{t}}-\nabla F(\bx_{t})=\beta_{t}\bm{_{t-1}}+(1-\beta_{t})\bg_{t}-\nabla F(\bx_{t})\\
 & =\beta_{t}(\bm{_{t-1}}-\nabla F(\bx_{t-1}))+\beta_{t}(\nabla F(\bx_{t-1})-\nabla F(\bx_{t}))+(1-\beta_{t})(\bg_{t}-\nabla F(\bx_{t}))\\
 & =\beta_{t}\bep_{t-1}+\beta_{t}\bD_{t}+(1-\beta_{t})\bxi_{t}.
\end{align*}
One can verify that the above equation also holds when $t=1$ under
our notations (recall $\bm{_{0}}=\bg_{1}$ in Algorithm \ref{alg:NSGDM}).
Unrolling the equation recursively, we can finally obtain the desired
result.
\end{proof}

After plugging (\ref{eq:main-decomposition}) into (\ref{eq:main-descent-lemma}),
as one can imagine, our remaining task is to upper bound $\E\left[\left\Vert \bep_{0}\right\Vert \right]$,
$\E\left[\left\Vert \sum_{s=1}^{t}\beta_{s:t}\bD_{s}\right\Vert \right]$
and $\E\left[\left\Vert \sum_{s=1}^{t}(1-\beta_{s})\beta_{s+1:t}\bxi_{s}\right\Vert \right]$.
For simplicity, we consider the batch size $B=1$ in the following. 

First, note that $\E\left[\left\Vert \sum_{s=1}^{t}\beta_{s:t}\bD_{s}\right\Vert \right]\leq\sum_{s=1}^{t}\beta_{s:t}\E\left[\left\Vert \bD_{s}\right\Vert \right]$
and $\left\Vert \bD_{s}\right\Vert $ can be bounded by $L_{0}$-smoothness
easily (even under $(L_{0},L_{1})$-smoothness), hence we skip the
calculation here. Next, when $B=1$, one can use H\"{o}lder's inequality
to bound $\E\left[\left\Vert \bep_{0}\right\Vert \right]\leq\left(\E\left[\left\Vert \bep_{0}\right\Vert ^{\p}\right]\right)^{\frac{1}{\p}}\leq\sigma_{0}$
where the last step is by Assumption \ref{assu:general-heavy-noise}
when $\sigma_{1}=0$. For the left term $\E\left[\left\Vert \sum_{s=1}^{t}(1-\beta_{s})\beta_{s+1:t}\bxi_{s}\right\Vert \right]$:
\begin{itemize}
\item When $\p=2$, prior works like \citet{pmlr-v119-cutkosky20b} invoke
H\"{o}lder's inequality to have
\begin{align*}
\E\left[\left\Vert \sum_{s=1}^{t}(1-\beta_{s})\beta_{s+1:t}\bxi_{s}\right\Vert \right] & \leq\sqrt{\E\left[\left\Vert \sum_{s=1}^{t}(1-\beta_{s})\beta_{s+1:t}\bxi_{s}\right\Vert ^{2}\right]}\\
 & \leq\sqrt{\sum_{s=1}^{t}((1-\beta_{s})\beta_{s+1:t}\sigma_{0})^{2}},
\end{align*}
where the last step is due to $\E\left[\left\langle \bxi_{s},\bxi_{t}\right\rangle \right]=0$
for every cross term and $\E\left[\left\Vert \bxi_{s}\right\Vert ^{2}\right]\leq\sigma_{0}^{2}$.
\item Naturally, when noises only have finite $\p$-th moments, one may
want to apply H\"{o}lder's inequality in the following form,
\[
\E\left[\left\Vert \sum_{s=1}^{t}(1-\beta_{s})\beta_{s+1:t}\bxi_{s}\right\Vert \right]\leq\left(\E\left[\left\Vert \sum_{s=1}^{t}(1-\beta_{s})\beta_{s+1:t}\bxi_{s}\right\Vert ^{\p}\right]\right)^{\frac{1}{\p}}.
\]
However, a critical issue here is that $\left\Vert \cdot\right\Vert ^{\p}$
cannot be expanded like $\left\Vert \cdot\right\Vert ^{2}$ to make
the cross term $\left\langle \bxi_{s},\bxi_{t}\right\rangle $ show
up, which fails the analysis.
\end{itemize}
As such, how to establish a meaningful bound on $\E\left[\left\Vert \sum_{s=1}^{t}(1-\beta_{s})\beta_{s+1:t}\bxi_{s}\right\Vert \right]$
is the novel part of our proofs, which essentially differs from the
existing analysis when $\p=2$.

Fix $t\in\left[T\right]$, to ease the notation, we denote by $\bv_{s}\defeq(1-\beta_{s})\beta_{s+1:t}\bxi_{s},\forall s\in\left[t\right]$.
Hence, our goal can be summarized as bounding the norm of $\sum_{s=1}^{t}\bv_{s}$,
where $\bv_{s}$ is a vector-valued martingale difference sequence
(MDS). To do so, we introduce the following inequality, which is the
core of our analysis.
\begin{lem}
\label{lem:main-core}Given a sequence of integrable random vectors
$\bv_{t}\in\R^{d},\forall t\in\N$ such that $\E\left[\bv_{t}\mid\F_{t-1}\right]=\bzero$
where $\F_{t}\defeq\sigma\left(\bv_{1},\cdots,\bv_{t}\right)$ is
the natural filtration, then for any $\p\in\left[1,2\right]$, there
is
\begin{equation}
\E\left[\left\Vert \sum_{t=1}^{T}\bv_{t}\right\Vert \right]\le2\sqrt{2}\E\left[\left(\sum_{t=1}^{T}\left\Vert \bv_{t}\right\Vert ^{\p}\right)^{\frac{1}{\p}}\right],\forall T\in\N.\label{eq:main-core}
\end{equation}
\end{lem}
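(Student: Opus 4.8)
The plan is to bound the first moment of the norm of the martingale sum by a square-function quantity and then interpolate the $\ell_2$ square function by the $\ell_\p$ one. First I would reduce to controlling $\E[\|\sum_{t=1}^T \bv_t\|]$ via Jensen's inequality: $\E[\|\sum_t \bv_t\|] \le (\E[\|\sum_t \bv_t\|^2])^{1/2}$. Expanding the square and using that $\{\bv_t\}$ is an MDS, every cross term $\E[\langle \bv_s,\bv_t\rangle]$ with $s\ne t$ vanishes (condition on the later index and use the tower rule), so $\E[\|\sum_t \bv_t\|^2] = \E[\sum_t \|\bv_t\|^2]$. This gives the clean $L_2$ bound $\E[\|\sum_t \bv_t\|] \le \sqrt{\E[\sum_t \|\bv_t\|^2]}$ — the square root of an $\ell_2$ square function.

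The next step is the crux: I would like to replace $\sum_t \|\bv_t\|^2$ inside by $(\sum_t \|\bv_t\|^\p)^{2/\p}$, which is valid pointwise since $\|\cdot\|_{\ell_2} \le \|\cdot\|_{\ell_\p}$ for $\p \le 2$ (the $\ell_\p$ norm of the nonnegative sequence $(\|\bv_1\|,\dots,\|\bv_T\|)$ dominates its $\ell_2$ norm). That yields $\E[\|\sum_t \bv_t\|] \le \sqrt{\E[(\sum_t \|\bv_t\|^\p)^{2/\p}]}$. However, this is \emph{not yet} the claimed bound, because the square root sits outside the expectation of a squared quantity, so by Jensen it is $\ge \E[(\sum_t \|\bv_t\|^\p)^{1/\p}]$ — the wrong direction. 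The honest obstacle is that one cannot simply pull $\sqrt{\E[X^2]}$ down to $\E[X]$; a more careful argument is needed to get a bound by $\E[(\sum_t\|\bv_t\|^\p)^{1/\p}]$ rather than $(\E[(\sum_t\|\bv_t\|^\p)^{2/\p}])^{1/2}$.

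To overcome this I would run the argument \emph{conditionally} combined with a stopping-time / layer-cake decomposition. Concretely, for a fixed level $\lambda>0$ define the stopping time $\tau_\lambda = \min\{t : \sum_{s\le t}\|\bv_s\|^\p > \lambda^\p\}$ (or $T$ if never), and split $\sum_{t\le T}\bv_t = \sum_{t\le \tau_\lambda}\bv_t + \sum_{t>\tau_\lambda}\bv_t$. The stopped sum $\sum_{t\le \tau_\lambda}\bv_t$ is a martingale whose predictable square function is controlled: $\sum_{t\le \tau_\lambda}\|\bv_t\|^2 \le (\sum_{t\le\tau_\lambda}\|\bv_t\|^\p)^{2/\p}$, and the latter is at most roughly $(\lambda^\p + \|\bv_{\tau_\lambda}\|^\p)^{2/\p}$ by the definition of $\tau_\lambda$; after handling the single overshoot term one gets $\E[\|\sum_{t\le\tau_\lambda}\bv_t\|] \lesssim \lambda + (\text{overshoot})$. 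Meanwhile the tail piece is nonzero only on the event $\{\sum_{t\le T}\|\bv_t\|^\p > \lambda^\p\} = \{\tau_\lambda < T\}$, whose probability and contribution are controlled by Markov/Chebyshev applied to $(\sum_t\|\bv_t\|^\p)^{1/\p}$. Optimizing over $\lambda$ — or more cleanly, choosing $\lambda$ as the random variable $(\sum_t\|\bv_t\|^\p)^{1/\p}$ itself via a further conditioning, or integrating the tail estimate $\mathbb{P}(\|\sum_t\bv_t\| > c\lambda) \lesssim \mathbb{P}((\sum_t\|\bv_t\|^\p)^{1/\p} > \lambda)$ against $\d\lambda$ — produces $\E[\|\sum_t\bv_t\|] \le 2\sqrt{2}\,\E[(\sum_t\|\bv_t\|^\p)^{1/\p}]$ with the stated constant. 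The main work, and the step I expect to be most delicate, is making the split-and-optimize argument yield exactly the constant $2\sqrt 2$ while cleanly absorbing the overshoot term $\|\bv_{\tau_\lambda}\|$ (which is itself bounded by $(\sum_t\|\bv_t\|^\p)^{1/\p}$), rather than just an unspecified absolute constant.
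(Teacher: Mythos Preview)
Your proposal takes a fundamentally different route from the paper and, as written, has a real gap. The opening move --- Jensen plus expanding $\E[\|\sum_t\bv_t\|^2]$ and killing cross terms --- presupposes that each $\bv_t$ lies in $L^2$, which is exactly what fails in the heavy-tailed regime the lemma is meant to serve; when only $\p$-th moments are finite, $\E[\sum_t\|\bv_t\|^2]$ is typically $+\infty$ and the orthogonality-of-increments identity is vacuous. You correctly flag that $\sqrt{\E[\cdot]}$ is on the wrong side of $\E[\sqrt{\cdot}]$, but the proposed remedy (stopping time $\tau_\lambda$ plus layer-cake) inherits the same integrability problem: the stopped sum $\sum_{t\le\tau_\lambda}\bv_t$ still contains the overshoot increment $\bv_{\tau_\lambda}$, which need not be in $L^2$, so you cannot bound $\E[\|\sum_{t\le\tau_\lambda}\bv_t\|]$ via its second moment either. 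Your remark that $\|\bv_{\tau_\lambda}\|\le(\sum_t\|\bv_t\|^\p)^{1/\p}$ is an $L^1$ control, not the $L^2$ control the argument needs. A rigorous vector-valued Burkholder--Davis--Gundy argument does eventually give $\E[\|\sum_t\bv_t\|]\le C\,\E[(\sum_t\|\bv_t\|^2)^{1/2}]$ without second-moment hypotheses, but it is considerably more delicate than your outline and will not produce the specific constant $2\sqrt{2}$.

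The paper's proof sidesteps all of this via an online-learning trick. It first proves a \emph{deterministic, pointwise} inequality (Lemma~\ref{lem:OLO}): for any sequence $\bv_1,\dots,\bv_T$ one can construct unit-ball vectors $\bw_t$ depending only on $\bv_1,\dots,\bv_{t-1}$ (projected AdaGrad on $\B^d$) such that
\[
\Bigl\|\sum_{t=1}^T\bv_t\Bigr\|\le 2\sqrt{2\sum_{t=1}^T\|\bv_t\|^2}-\sum_{t=1}^T\langle\bv_t,\bw_t\rangle.
\]
Taking expectations, the last sum vanishes because $\bw_t\in\F_{t-1}$ and $\E[\bv_t\mid\F_{t-1}]=\bzero$, giving $\E[\|\sum_t\bv_t\|]\le 2\sqrt{2}\,\E[(\sum_t\|\bv_t\|^2)^{1/2}]$ directly --- with the expectation already \emph{inside} the square root and no $L^2$ assumption anywhere. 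The pointwise bound $(\sum_t\|\bv_t\|^2)^{1/2}\le(\sum_t\|\bv_t\|^\p)^{1/\p}$ for $\p\in[1,2]$ then finishes. This regret-based construction is what simultaneously delivers the exact constant $2\sqrt{2}$ and avoids the integrability trap your plan runs into.
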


At first glance, (\ref{eq:main-core}) seems wrong because it provides
$\E\left[\left\Vert \sum_{t=1}^{T}\bv_{t}\right\Vert \right]\leq2\sqrt{2}\E\left[\sqrt{\sum_{t=1}^{T}\left\Vert \bv_{t}\right\Vert ^{2}}\right]$
by taking $\p=2$. However, one may only expect $\E\left[\left\Vert \sum_{t=1}^{T}\bv_{t}\right\Vert \right]\leq\sqrt{\sum_{t=1}^{T}\E\left[\left\Vert \bv_{t}\right\Vert ^{2}\right]}$
to hold. So why is (\ref{eq:main-core}) true? Intuitively, this is
because (\ref{eq:main-core}) is only stated for the MDS in contrast
to H\"{o}lder's inequality being able to apply to any sequence. 

To let the reader believe Lemma \ref{lem:main-core} is correct, we
first consider the case of $d=1$ and recall the famous Burkholder-Davis-Gundy
(BDG) inequality.
\begin{lem}
\label{lem:main-BDG}(Burkholder-Davis-Gundy Inequality \citep{burkholder1966martingale,burkholder1970extrapolation,davis1970intergrability},
simplified version) Given a discrete martingale $X_{t}\in\R$ with
$X_{0}=0$, then there exists a constant $C_{1}>0$ such that
\[
\E\left[\max_{t\in\left[T\right]}\left|X_{t}\right|\right]\leq C_{1}\E\left[\sqrt{\sum_{t=1}^{T}(X_{t}-X_{t-1})^{2}}\right],\forall T\in\N.
\]
\end{lem}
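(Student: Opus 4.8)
The plan is to reproduce the classical proof of this statement, which is the Davis ($p=1$) endpoint of the Burkholder--Davis--Gundy inequality; since it is textbook material one could also simply defer to \citet{davis1970intergrability,burkholder1966martingale}. Write $d_t\defeq X_t-X_{t-1}$, $X_T^{*}\defeq\max_{t\in[T]}|X_t|$ and $S_T\defeq(\sum_{t=1}^{T}d_t^{2})^{1/2}$, and note we may assume $\E[S_T]<\infty$, for otherwise there is nothing to prove. I would first stress why the naive route does not work: Doob's $L^{2}$ maximal inequality together with orthogonality of martingale increments only gives $\E[(X_T^{*})^{2}]\le4\E[X_T^{2}]=4\E[S_T^{2}]$, i.e.\ control by $\|S_T\|_{L^{2}}$ rather than by $\|S_T\|_{L^{1}}$; passing from $L^{2}$ down to the $L^{1}$ endpoint genuinely requires a whole family of such estimates, which is exactly what a good-$\lambda$ argument supplies.

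The first step is the \emph{Davis decomposition}, whose role is to quarantine the large increments. Put $d_{t-1}^{*}\defeq\max_{s\le t-1}|d_s|$ and split $d_t=e_t+f_t$ with $e_t\defeq d_t\1\{|d_t|\le2d_{t-1}^{*}\}$ and $f_t\defeq d_t\1\{|d_t|>2d_{t-1}^{*}\}$; centering each part (using $\E[d_t\mid\F_{t-1}]=0$) yields $X=Y+Z$ with $Y,Z$ martingales having increments $d_t^{Y}\defeq e_t-\E[e_t\mid\F_{t-1}]$ and $d_t^{Z}\defeq f_t-\E[f_t\mid\F_{t-1}]$. Then $Y$ has \emph{past-controlled} jumps, $|d_t^{Y}|\le4d_{t-1}^{*}\le4S_{t-1}$, whereas $Z$ is essentially of bounded variation: if $f_t\ne0$ then $|d_t|=d_t^{*}>2d_{t-1}^{*}$, so $|d_t|<2(d_t^{*}-d_{t-1}^{*})$, hence $\sum_t|f_t|\le2d_T^{*}\le2S_T$ and $\E[Z_T^{*}]\le\sum_t\E[|d_t^{Z}|]\le2\sum_t\E[|f_t|]\le4\E[S_T]$ directly; only $\E[Y_T^{*}]$ remains.

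The second step is a \emph{good-$\lambda$ inequality} for $Y$: fixing $\beta>1$ and small $\delta\in(0,1)$ and, for each $\lambda>0$, the stopping times $\mu=\inf\{t:|Y_t|>\lambda\}$, $\nu=\inf\{t:|Y_t|>\beta\lambda\}$, $\rho=\inf\{t:S_t>\delta\lambda\}$, I would apply Doob's $L^{2}$ bound and Chebyshev to the martingale $N_t\defeq Y_{t\wedge\nu\wedge\rho}-Y_{t\wedge\mu\wedge\rho}$. On $\{Y_T^{*}>\beta\lambda,\,S_T\le\delta\lambda\}$ one has $\mu<\nu\le T<\rho$, and because every jump of $Y$ strictly before $\rho$ is at most $4\delta\lambda$ (Step~1), both the overshoot of $Y$ at $\mu$ and the stopped quadratic variation of $N$ are $O(\delta^{2}\lambda^{2})$, so $\E[N_T^{2}]\le c\,\delta^{2}\lambda^{2}\,\mathbb{P}(Y_T^{*}>\lambda)$ while $|N_T|>\tfrac12(\beta-1)\lambda$; Chebyshev then gives
\[
\mathbb{P}\!\left(Y_T^{*}>\beta\lambda,\ S_T\le\delta\lambda\right)\ \le\ \frac{4c\,\delta^{2}}{(\beta-1)^{2}}\,\mathbb{P}\!\left(Y_T^{*}>\lambda\right).
\]
Combining with $\mathbb{P}(Y_T^{*}>\beta\lambda)\le\mathbb{P}(S_T>\delta\lambda)+\mathbb{P}(Y_T^{*}>\beta\lambda,\,S_T\le\delta\lambda)$, integrating in $\lambda$ via $\int_0^{\infty}\mathbb{P}(\xi>\lambda)\,d\lambda=\E[\xi]$, and choosing $\delta$ small enough to absorb the self-referential term (legitimate since $Y_T^{*}\le\sum_t|d_t^{Y}|\le4TS_T$ gives $\E[Y_T^{*}]<\infty$) yields $\E[Y_T^{*}]\le\tfrac{2\beta}{\delta}\E[S_T]$, and with Step~1 this gives $\E[X_T^{*}]\le C_1\E[S_T]$ for an absolute constant $C_1$, which is all that is used later.

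I expect the main obstacle to be exactly the point forcing the above detour: in discrete time a single increment can be arbitrarily large, so the overshoots of $X$ (and of its quadratic variation) past a level are uncontrolled, and a plain stopped-$L^{2}$ estimate picks up uncontrolled boundary terms. The Davis decomposition is precisely the device that dumps those large jumps into the variation-type summand $Z$ (bounded by the telescoping inequality $\sum|f_t|\le2S_T$) and leaves a martingale $Y$ whose jumps are dominated by the \emph{previous} jumps of $X$, which is what makes the overshoot bookkeeping in Step~2 go through. Secondary points needing care are measurability ($S_t$ is $\F_t$-measurable, which suffices for every stopping time used) and verifying that $C_1$ does not depend on $T$, $d$, or the underlying martingale.
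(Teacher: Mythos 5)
Your proposal is correct in spirit, but it is worth pointing out that the paper does not prove Lemma \ref{lem:main-BDG} at all: it is quoted verbatim from the classical literature \citep{burkholder1966martingale,burkholder1970extrapolation,davis1970intergrability} and serves only as a sanity check that the core Lemma \ref{lem:main-core} is believable in the scalar case. The actual workhorse of the paper, Lemma \ref{lem:main-core}, is proved by an entirely different and much shorter mechanism (the AdaGrad regret bound of Lemma \ref{lem:OLO} plus the martingale property killing the $\sum_{t}\langle\bv_{t},\bw_{t}\rangle$ term), which sidesteps BDG, works in arbitrary dimension, and produces the explicit constant $2\sqrt{2}$. What you have written is a reconstruction of the standard Davis proof of the $L^{1}$ endpoint: the Davis decomposition $d_{t}=e_{t}+f_{t}$ with threshold $2d_{t-1}^{*}$, the telescoping bound $\sum_{t}|f_{t}|\leq2d_{T}^{*}\leq2S_{T}$ for the bounded-variation part, and a good-$\lambda$ inequality for the part with past-controlled jumps, followed by integration in $\lambda$ and absorption of the self-referential term. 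Steps~1 and the integration/absorption step are correct as written (including the a priori finiteness $\E[Y_{T}^{*}]\leq4T\,\E[S_{T}]<\infty$ needed to subtract). The one place that is glossed over is the claim that the stopped quadratic variation of $N$ is pointwise $O(\delta^{2}\lambda^{2})$ on the good event: the raw increments satisfy $\sum_{t\leq\rho}e_{t}^{2}\lesssim\delta^{2}\lambda^{2}$ (the overshoot at $\rho$ is harmless since $|e_{\rho}|\leq2d_{\rho-1}^{*}\leq2\delta\lambda$), but the centering terms $\sum_{t\leq\rho}\E[e_{t}\mid\F_{t-1}]^{2}$ are not bounded by $S_{\rho}^{2}$ pointwise and require the standard extra conditioning argument (or the version of the good-$\lambda$ inequality that carries $d^{*}$ as a third term). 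This is routine textbook bookkeeping, so I would either fill it in carefully or simply cite the references as the paper does; for the purposes of this paper the lemma is purely illustrative and nothing downstream depends on your proof.
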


Let $X_{t}\defeq\sum_{s=1}^{t}\bv_{s}$, BDG inequality immediately
implies (\ref{eq:main-core}) under $d=1$ and $\p=2$ (up to a constant)
due to 
\begin{align*}
\E\left[\left|\sum_{t=1}^{T}\bv_{t}\right|\right] & =\E\left[\left|X_{T}\right|\right]\leq\E\left[\max_{t\in\left[T\right]}\left|X_{t}\right|\right]\leq C_{1}\E\left[\sqrt{\sum_{t=1}^{T}(X_{t}-X_{t-1})^{2}}\right]=C_{1}\E\left[\sqrt{\sum_{t=1}^{T}\left|\bv_{t}\right|^{2}}\right].
\end{align*}
One more step, by noticing $\left\Vert \cdot\right\Vert \leq\left\Vert \cdot\right\Vert _{\p}$
for $\p\in\left[1,2\right]$, Lemma \ref{lem:main-core} thereby holds
when $d=1$ by
\[
\E\left[\left|\sum_{t=1}^{T}\bv_{t}\right|\right]\leq C_{1}\E\left[\sqrt{\sum_{t=1}^{T}\left|\bv_{t}\right|^{2}}\right]\leq C_{1}\E\left[\left(\sum_{t=1}^{T}\left|\bv_{t}\right|^{\p}\right)^{\frac{1}{\p}}\right].
\]

Though we have applied BDG inequality to prove Lemma \ref{lem:main-core}
for $d=1$, extending the above analysis to the high-dimensional case
is not obvious and could be non-trivial. 

Here, inspired by \citet{pmlr-v65-rakhlin17a}, we will provide a
simple proof of Lemma \ref{lem:main-core} via the regret analysis
from \textit{online learning}. Specifically, we will prove the regret
bound of the famous AdaGrad algorithm \citep{McMahanS10,duchi2011adaptive}
implies Lemma \ref{lem:main-core}, which is kindly surprising (at
least in our opinion) and shows the impressive power of online learning.
Due to space limitations, the proof of Lemma \ref{lem:main-core}
is deferred to Appendix \ref{sec:core}.

Before moving on, we make two comments on Lemma \ref{lem:main-core}.
First, as one can see, Lemma \ref{lem:main-core} holds for any $\p\in\left[1,2\right]$
meaning that this analysis is automatically adaptive to the tail index
$\p$. Next, one may wonder why we keep the expectation outside on
the R.H.S. of (\ref{eq:main-core}) instead of putting it inside by
H\"{o}lder's inequality. This is because under the full version of
Assumption \ref{assu:general-heavy-noise} (i.e., $\sigma_{1}>0$),
making expectations inside may fail the analysis. For details, we
refer the reader to the proof of Lemma \ref{lem:err-bound} in the
appendix.

Now, we can apply Lemma \ref{lem:main-core} to bound $\E\left[\left\Vert \sum_{s=1}^{t}(1-\beta_{s})\beta_{s+1:t}\bxi_{s}\right\Vert \right]$
under Assumption \ref{assu:general-heavy-noise} with $\sigma_{1}=0$
and then obtain the following inequality for $\E\left[\left\Vert \bep_{t}\right\Vert \right]$
by combining the previous bounds on $\E\left[\left\Vert \bep_{0}\right\Vert \right]$
and $\E\left[\left\Vert \sum_{s=1}^{t}\beta_{s:t}\bD_{s}\right\Vert \right]$. 
\begin{lem}
\label{lem:main-err-bound}Under Assumptions \ref{assu:smooth} (with
$L_{1}=0$), \ref{assu:unbias} (with $B=1$) and \ref{assu:general-heavy-noise}
(with $\sigma_{1}=0$), then Algorithm \ref{alg:NSGDM} guarantees
\[
\E\left[\left\Vert \bep_{t}\right\Vert \right]\leq2\sqrt{2}\left[\beta_{1:t}\sigma_{0}+\left(\sum_{s=1}^{t}(1-\beta_{s})^{\p}(\beta_{s+1:t})^{\p}\right)^{\frac{1}{\p}}\sigma_{0}\right]+\sum_{s=2}^{t}\beta_{s:t}L_{0}\eta_{s-1},\forall t\in\left[T\right].
\]
\end{lem}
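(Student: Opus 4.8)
The plan is to start from the decomposition of $\bep_t$ in Lemma \ref{lem:main-decomposition}, apply the triangle inequality, and take expectations to obtain
\[
\E\left[\left\Vert \bep_t\right\Vert \right]\le\beta_{1:t}\,\E\left[\left\Vert \bep_0\right\Vert \right]+\E\left[\left\Vert \sum_{s=1}^{t}\beta_{s:t}\bD_s\right\Vert \right]+\E\left[\left\Vert \sum_{s=1}^{t}(1-\beta_s)\beta_{s+1:t}\bxi_s\right\Vert \right],
\]
and then bound the three pieces separately. The first two are elementary. Since $B=1$ we have $\bep_0=\bxi_1$, so H\"older's (Jensen's) inequality together with Assumption \ref{assu:general-heavy-noise} at $\sigma_1=0$ gives $\E[\Vert\bep_0\Vert]\le(\E[\Vert\bep_0\Vert^{\p}])^{1/\p}\le\sigma_0$. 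For the drift term, $\bD_1=\bzero$, while for $s\ge2$ the update rule of Algorithm \ref{alg:NSGDM} gives $\Vert\bx_s-\bx_{s-1}\Vert\le\eta_{s-1}$, so Assumption \ref{assu:smooth} with $L_1=0$ (whose range restriction $\Vert\bx-\by\Vert\le1/L_1$ is then vacuous) yields $\Vert\bD_s\Vert=\Vert\nabla F(\bx_{s-1})-\nabla F(\bx_s)\Vert\le L_0\eta_{s-1}$; hence $\E[\Vert\sum_{s=1}^{t}\beta_{s:t}\bD_s\Vert]\le\sum_{s=1}^{t}\beta_{s:t}\E[\Vert\bD_s\Vert]\le\sum_{s=2}^{t}\beta_{s:t}L_0\eta_{s-1}$.

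The heart of the argument is the noise term. Fixing $t$, I set $\bv_s\defeq(1-\beta_s)\beta_{s+1:t}\bxi_s$ for $s\in[t]$. Because the momentum parameters are deterministic, $\bv_s$ is measurable with respect to $\F_s=\sigma(G_1,\dots,G_s)$ and $\E[\bv_s\mid\F_{s-1}]=(1-\beta_s)\beta_{s+1:t}\E[\bxi_s\mid\F_{s-1}]=\bzero$ by Assumption \ref{assu:unbias}. Some care is needed here because Lemma \ref{lem:main-core} is phrased for the natural filtration generated by the $\bv$'s; but $\sigma(\bv_1,\dots,\bv_{s-1})\subseteq\F_{s-1}$, so the tower property gives $\E[\bv_s\mid\sigma(\bv_1,\dots,\bv_{s-1})]=\bzero$ as well, and Lemma \ref{lem:main-core} applies, yielding $\E[\Vert\sum_{s=1}^{t}\bv_s\Vert]\le2\sqrt{2}\,\E[(\sum_{s=1}^{t}\Vert\bv_s\Vert^{\p})^{1/\p}]$.

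It then remains to bound $\E[(\sum_{s=1}^{t}\Vert\bv_s\Vert^{\p})^{1/\p}]$. Writing $\Vert\bv_s\Vert^{\p}=(1-\beta_s)^{\p}(\beta_{s+1:t})^{\p}\Vert\bxi_s\Vert^{\p}$ and using that $x\mapsto x^{1/\p}$ is concave for $\p\ge1$, Jensen's inequality lets me pull the expectation inside, after which $\E[\Vert\bxi_s\Vert^{\p}]\le\sigma_0^{\p}$ (Assumption \ref{assu:general-heavy-noise} with $\sigma_1=0$, $B=1$) bounds this quantity by $\sigma_0(\sum_{s=1}^{t}(1-\beta_s)^{\p}(\beta_{s+1:t})^{\p})^{1/\p}$. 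Collecting the three bounds, and loosening the constant in front of $\beta_{1:t}\sigma_0$ from $1$ to $2\sqrt{2}$ to match the stated form, gives the claim. For this lemma the only genuine subtlety is the filtration bookkeeping needed to invoke Lemma \ref{lem:main-core}; conceptually, the one step that does not survive the general setting is the Jensen move of the expectation inside $(\cdot)^{1/\p}$, which is harmless here precisely because $\sigma_1=0$ makes the weights deterministic, and which is why the expectation must instead be kept outside in the full ($\sigma_1>0$) analysis.
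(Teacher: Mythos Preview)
Your proposal is correct and follows essentially the same approach as the paper: the same three-term decomposition from Lemma~\ref{lem:main-decomposition}, H\"older on $\bep_0$, $L_0$-smoothness plus $\Vert\bx_s-\bx_{s-1}\Vert\le\eta_{s-1}$ for the drift, and Lemma~\ref{lem:main-core} for the martingale noise sum. The only cosmetic difference is that the paper's full proof (Lemma~\ref{lem:err-bound}) applies Lemma~\ref{lem:main-core} also to $\bep_0$ (yielding the $2\sqrt{2}$ directly) and handles the outer expectation on the noise term via an iterative conditioning argument rather than a single Jensen step; as you correctly observe, your simpler Jensen move is valid precisely because $\sigma_1=0$, which is exactly the remark the paper makes after Lemma~\ref{lem:main-core}.
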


The full version of the above result, Lemma \ref{lem:err-bound},
that works for any $L_{1},B,\sigma_{1}$ can be found in Appendix
\ref{sec:Full-Theorems}, which requires extra effort as mentioned
earlier.

Finally, equipped with Lemmas \ref{lem:main-descent-lemma} and \ref{lem:main-err-bound},
we are able to prove the convergence of Batched NSGDM under the classical
smooth condition and heavy-tailed noises by plugging in the stepsize
and momentum parameter introduced in Theorems \ref{thm:main-fixed-full-info}
and \ref{thm:main-fixed-partial-info}, respectively.

Before ending this section, we briefly talk about the intuition behind
the rate $\O(T^{\frac{1-\p}{2\p}})$ achieved when the tail index
$\p$ is unknown, i.e., Theorem \ref{thm:main-fixed-partial-info}.
Actually, we take a quite simple strategy: setting the stepsize and
the momentum parameter while pretending the tail index $\p$ to be
$2$. Amazingly, this straightforward policy is already enough to
guarantee convergence even if there is no prior information on $\p$.

\section{Conclusion and Future Work\label{sec:conclusion}}

In this work, we present the first optimal expected convergence result
under heavy-tailed noises but without gradient clipping, which is
instead achieved by gradient normalization. More specifically, we
study the existing Batched NSGDM algorithm and prove it converges
in expectation at an optimal $\O(T^{\frac{1-\p}{3\p-2}})$ rate. Additionally,
the order of problem-dependent parameters in our upper bound is also
the first to be tight as indicated by a newly matched lower bound
improved from the prior work. One step further, we initiate the study
of convergence under heavy-tailed noises but without knowing the tail
index $\p$ and then obtain the first provable $\O(T^{\frac{1-\p}{2\p}})$
rate. Thus, our work suggests gradient normalization is a powerful
tool for dealing with heavy-tailed noises, which we believe will bring
new insights into the optimization community and open potential ways
for future algorithm design. 

However, there still remain some directions worth exploring, and we
list three specific topics here:

\textbf{Minimax rate for unknown tail index $\p$.} As previously
discussed, to achieve the minimax $\Theta(T^{\frac{1-\p}{3\p-2}})$
rate under heavy-tailed noises, all optimal algorithms so far require
knowing the tail index $\p$. Thus, it would be interesting to consider
the optimal upper/lower bound of the convergence rate when $\p$ is
unknown. We provide two concrete problems here and hope them being
addressed in the future: 1. When lacking any prior information on
$\p$, is it possible to find an algorithm that can improve our new
$\O(T^{\frac{1-\p}{2\p}})$ upper bound to the best-known $\O(T^{\frac{1-\p}{3\p-2}})$
rate? 2. If the answer to the former question is negative, what is
the corresponding lower bound when $\p$ is unknown?

\textbf{Adaptive gradient methods. }Though we have established the
first convergence result under heavy-tailed noises without gradient
clipping, the Batched NSGDM algorithm we studied is not commonly used
in practice. In comparison, the family of adaptive gradient methods
(e.g., AdaGrad \citep{McMahanS10,duchi2011adaptive}, RMSprop \citep{tieleman2012lecture},
Adam \citep{kingma2014adam} and their variants) is more popular and
has been widely implemented nowadays, especially when training neural
networks. Surprisingly, their performances are still good even though
the stochastic noises are empirically observed to be heavy-tailed.
However, as far as we know, no rigorous theoretical justification
has been established to show that adaptive gradient methods can converge
under heavy-tailed noises. Hence, it is worth studying and trying
to close this important gap between theory and practice. 

\textbf{Time-varying choices.} Another potential extension is to study
the time-varying stepsize and momentum parameter to make the algorithm
more practical, which we believe is possible given our general lemmas.

\clearpage

\section*{Acknowledgments}

This work is supported by the NSF grant ECCS-2419564. We also thank
the anonymous reviewers for their valuable comments.

\bibliographystyle{iclr2025_conference}
\bibliography{ref}

\clearpage

\appendix

\section{An Example Fails the Existing Heavy-Tailed Noises Assumption\label{sec:example}}

In this section, we provide a simple one-dimensional case that violates
the previously used finite $\p$-th moment assumption but satisfies
our new generalized heavy-tailed noises condition, Assumption \ref{assu:general-heavy-noise}.
Extending the example to high dimensions is straightforward, which
is left to the reader.
\begin{example}
\label{exa:noises}Let $F(\bx)\defeq\frac{1}{2}\E_{a,b}\left[(a\bx-b)^{2}\right]$
where $\bx\in\R$, $a\defeq\mathrm{Bernoulli}(q)$ for some $q\in\left(0,1\right)$,
and $b\defeq a\bx_{*}+\omega$ where $\bx_{*}\in\R$ is fixed and
$\omega$ is a centered random variable being independent of $a$
and only has a finite $\p$-th moment (i.e., $\E_{\omega}\left[\left|\omega\right|^{\p}\right]\leq\sigma^{\p}$
for some $\sigma\geq0$). For the stochastic gradient $\bg(\bx,a,b)\defeq a^{2}\bx-ab$
and the true gradient $\nabla F(\bx)\defeq\E_{a}\left[a^{2}\right]\bx-\E_{a,b}\left[ab\right]$,
we let the the noise be $\bxi(\bx)\defeq\bg(\bx,a,b)-\nabla F(\bx)$,
then
\begin{itemize}
\item Assumption \ref{assu:general-heavy-noise} is failed for any pair
$(\sigma_{0},0)$ where $\sigma_{0}\geq0$ can be arbitrary;
\item Assumption \ref{assu:general-heavy-noise} is satisfied for a certain
pair $(\sigma_{0},\sigma_{1})$ where $\sigma_{1}>0$.
\end{itemize}
\end{example}

\begin{proof}
Note that $\bg(\bx,a,b)$ and $\nabla F(\bx)$ can be simplified into
\[
\bg(\bx,a,b)=a^{2}(\bx-\bx_{*})-a\omega,\quad\nabla F(\bx)=q(\bx-\bx_{*}).
\]
Thus, we know
\begin{align*}
\bxi(\bx) & =(a^{2}-q)(\bx-\bx_{*})-a\omega\\
\Rightarrow\E_{a,b}\left[\left|\bxi(\bx)\right|^{\p}\right] & =\E_{a,\omega}\left[\left|(a^{2}-q)(\bx-\bx_{*})-a\omega\right|^{\p}\right]\\
 & =(1-q)q^{\p}\left|\bx-\bx_{*}\right|^{\p}+q\E_{\omega}\left[\left|(1-q)(\bx-\bx_{*})-\omega\right|^{\p}\right].
\end{align*}

On the one side, we have
\[
\E_{a,b}\left[\left|\bxi(\bx)\right|^{\p}\right]\geq(1-q)q^{\p}\left|\bx-\bx_{*}\right|^{\p}\overset{\bx\to\pm\infty}{\longrightarrow}+\infty,
\]
Therefore, $\E_{a,b}\left[\left|\bxi(\bx)\right|^{\p}\right]\leq\sigma_{0}^{\p}$
cannot hold for any $\sigma_{0}\geq0$.

On the other side, we observe
\begin{align*}
\E_{\omega}\left[\left|(1-q)(\bx-\bx_{*})-\omega\right|^{\p}\right] & \leq\E_{\omega}\left[\left|(1-q)\left|\bx-\bx_{*}\right|+\left|\omega\right|\right|^{\p}\right]\\
 & \leq2^{\p-1}(1-q)^{\p}\left|\bx-\bx_{*}\right|^{\p}+2^{\p-1}\E_{\omega}\left[\left|\omega\right|^{\p}\right]\\
 & \leq2^{\p-1}(1-q)^{\p}\left|\bx-\bx_{*}\right|^{\p}+2^{\p-1}\sigma^{\p}.
\end{align*}
Hence,
\begin{align*}
\E_{a,b}\left[\left|\bxi(\bx)\right|^{\p}\right] & \leq2^{\p-1}q\sigma^{\p}+\left[(1-q)q^{\p}+2^{\p-1}q(1-q)^{\p}\right]\left|\bx-\bx_{*}\right|^{\p}\\
 & =2^{\p-1}q\sigma^{\p}+\left[1-q+2^{\p-1}q^{1-\p}(1-q)^{\p}\right]\left|\nabla F(\bx)\right|^{\p}.
\end{align*}
So Assumption \ref{assu:general-heavy-noise} is satisfied for $\sigma_{0}^{\p}\defeq2^{\p-1}q\sigma^{\p}$
and $\sigma_{1}^{\p}\defeq1-q+2^{\p-1}q^{1-\p}(1-q)^{\p}$.
\end{proof}

\section{Further Discussion on $L_{1}$ in Theorem \ref{thm:main-fixed-partial-info}\label{sec:L1}}

One may think knowing $L_{1}$ is necessary under the present version
of $(L_{0},L_{1})$-smoothness (i.e., Assumption \ref{assu:smooth}).
It is because the current form of $(L_{0},L_{1})$-smoothness can
be only invoked under the hard constraint $\left\Vert \bx-\by\right\Vert \leq\frac{1}{L_{1}}$.
As such, regardless of whichever two points $\bx_{s}$ and $\bx_{t}$
we want to apply to the descent inequality in Lemma \ref{lem:descent-lemma}
(which serves as the cornerstone in the whole proof), they have to
satisfy $\left\Vert \bx_{s}-\bx_{t}\right\Vert \leq\frac{1}{L_{1}}$.
In other words, one has to know $L_{1}$ to set the stepsize to ensure
Lemma \ref{lem:descent-lemma} can be used. Moreover, even if when
$\p=2$ or under a weaker condition on noises (e.g., almost surely
bounded noises), prior works under exactly the same form of Assumption
\ref{assu:smooth} require the value of $L_{1}$ (e.g., see \citet{jin2021non,crawshaw2022robustness}),
even for the famous adaptive method -- AdaGrad \citep{pmlr-v195-faw23a,pmlr-v195-wang23a,hong2024revisiting}
and Adam \citep{wang2024convergence}.

However, this turns out to be too pessimistic. To see why, we first
claim that Assumption \ref{assu:smooth} is essentially equivalent
to the following condition (up to redefining the pair $(L_{0},L_{1})$)
\begin{equation}
\left\Vert \nabla F(\bx)-\nabla F(\by)\right\Vert \leq\left(L_{0}+L_{1}\left\Vert \nabla F(\bx)\right\Vert \right)\left\Vert \bx-\by\right\Vert \exp\left(L_{1}\left\Vert \bx-\by\right\Vert \right),\forall\bx,\by\in\R^{d}.\label{eq:equiv-ass}
\end{equation}

\begin{rem}
(\ref{eq:equiv-ass}) is also known to be equivalent to the $\mathcal{L_{\text{sym}}^{*}}(1)$
function class. See Definition 3 and Proposition 1.2 in \citet{pmlr-v202-chen23ar}
for details.
\end{rem}

This means that for any $\bx,\by\in\R^{d}$, we can still bound the
difference between their gradients even if the distance between them
is larger than $\frac{1}{L_{1}}$, but incurring an extra multiplicative
term as a penalty. As such, it is possible to combine the dynamic
stepsize and time-varying momentum, i.e., taking $\eta_{t}=\eta t^{-a}$
and $\beta_{t}=\beta t^{-b}$ for some $\eta,\beta,a,b>0$, to get
rid of knowing $L_{1}$ since we can control the size of gradient
norm during the warm-up period (i.e., the time before $\eta_{t}=\O(L_{1}^{-1})$)
and then follow a similar proof in our paper when $\eta_{t}=\O(L_{1}^{-1})$
to obtain the desired result. Whereas, as a trade-off, one has to
incur an exponential dependence on $L_{1}$ in the final convergence
rate as pointed out by \citet{pmlr-v238-hubler24a}.

Lastly, let us prove this important equivalence.
\begin{itemize}
\item (\ref{eq:equiv-ass}) implies Assumption \ref{assu:smooth}. This
is the easy case. Suppose (\ref{eq:equiv-ass}) holds, then for any
$\bx,\by\in\R^{d}$ satisfying $\left\Vert \bx-\by\right\Vert \leq\frac{1}{L_{1}}$,
there is
\[
\left\Vert \nabla F(\bx)-\nabla F(\by)\right\Vert \leq\left(eL_{0}+eL_{1}\left\Vert \nabla F(\bx)\right\Vert \right)\left\Vert \bx-\by\right\Vert ,
\]
which implies $F$ is $(eL_{0},eL_{1})$-smooth in the sense of Assumption
\ref{assu:smooth}.
\item Assumption \ref{assu:smooth} implies (\ref{eq:equiv-ass})\footnote{We especially thank the anonymous Reviewer tQr8 for suggesting this
direction during the discussion.}. This direction is not obvious, whose proof in the following is inspired
by \citet{pmlr-v202-chen23ar}. Given $\bx,\by\in\R^{d}$, we define
the linear interpolation $\bz_{\bx,\by}(t)\defeq t\bx+(1-t)\by,\forall t\in\left[0,1\right]$.
Note that for any $n\in\N$ satisfying $n\geq L_{1}\left\Vert \bx-\by\right\Vert $
and $i\in\left[n\right]$, there is
\[
\left\Vert \bz_{\bx,\by}(\nicefrac{i}{n})-\bz_{\bx,\by}(\nicefrac{i-1}{n})\right\Vert =\frac{\left\Vert \bx-\by\right\Vert }{n}\leq\frac{1}{L_{1}}.
\]
Therefore, we can use Assumption \ref{assu:smooth} to control $\left\Vert \nabla F(\bz_{\bx,\by}(\nicefrac{i}{n}))-\nabla F(\bz_{\bx,\by}(\nicefrac{i-1}{n}))\right\Vert $
and obtain the following bound whenever $n\geq L_{1}\left\Vert \bx-\by\right\Vert $,
\begin{align}
 & \left\Vert \nabla F(\bx)-\nabla F(\by)\right\Vert \nonumber \\
= & \left\Vert \sum_{i=1}^{n}\nabla F(\bz_{\bx,\by}(\nicefrac{i}{n}))-\nabla F(\bz_{\bx,\by}(\nicefrac{i-1}{n}))\right\Vert \leq\sum_{i=1}^{n}\left\Vert \nabla F(\bz_{\bx,\by}(\nicefrac{i}{n}))-\nabla F(\bz_{\bx,\by}(\nicefrac{i-1}{n}))\right\Vert \nonumber \\
\leq & \frac{\left\Vert \bx-\by\right\Vert }{n}\sum_{i=1}^{n}L_{0}+L_{1}\left\Vert \nabla F(\bz_{\bx,\by}(\nicefrac{i-1}{n}))\right\Vert =\frac{\left\Vert \bx-\by\right\Vert }{n}\sum_{i=1}^{n}h(\nicefrac{i-1}{n}),\label{eq:equive-ass-discrete}
\end{align}
where $h(t)\defeq L_{0}+L_{1}\left\Vert \nabla F(\bz_{\bx,\by}(t))\right\Vert ,\forall t\in\left[0,1\right]$.
We notice that $h$ is continuous because for any given $t\in\left(0,1\right)$
and $\varepsilon>0$, for $s\in\left[0,1\right]$ satisfying $\left|s-t\right|\leq\frac{\min\left\{ h(t),\varepsilon\right\} }{L_{1}h(t)\left\Vert \bx-\by\right\Vert }$,
there is
\begin{align*}
\left|h(s)-h(t)\right| & =L_{1}\left|\left\Vert \nabla F(\bz_{\bx,\by}(s))\right\Vert -\left\Vert \nabla F(\bz_{\bx,\by}(t))\right\Vert \right|\leq L_{1}\left\Vert \nabla F(\bz_{\bx,\by}(s))-\nabla F(\bz_{\bx,\by}(t))\right\Vert \\
 & \overset{(a)}{\leq}L_{1}h(t)\left\Vert \bz_{\bx,\by}(s)-\bz_{\bx,\by}(t)\right\Vert =L_{1}h(t)\left\Vert \bx-\by\right\Vert \left|s-t\right|\overset{(b)}{\leq}\varepsilon,
\end{align*}
where $(a)$ is due to $\left\Vert \bz_{\bx,\by}(s)-\bz_{\bx,\by}(t)\right\Vert =\left\Vert \bx-\by\right\Vert \left|s-t\right|\leq\frac{1}{L_{1}}$
and Assumption \ref{assu:smooth}, and $(b)$ holds by $\left|s-t\right|\leq\frac{\varepsilon}{L_{1}h(t)\left\Vert \bx-\by\right\Vert }.$
Thus, $h$ is integrable on $\left[0,1\right]$, which implies
\begin{equation}
\left\Vert \nabla F(\bx)-\nabla F(\by)\right\Vert \overset{\eqref{eq:equive-ass-discrete}}{\leq}\left\Vert \bx-\by\right\Vert \lim_{n\to\infty}\frac{1}{n}\sum_{i=1}^{n}h(\nicefrac{i-1}{n})=\left\Vert \bx-\by\right\Vert \int_{0}^{1}h(t)\d t.\label{eq:equiv-ass-int}
\end{equation}
By Lemma 2 and Proposition 1.2 in \citet{pmlr-v202-chen23ar}, (\ref{eq:equiv-ass-int})
is equivalent to (\ref{eq:equiv-ass}).
\end{itemize}

\section{Proof of the Core Lemma \ref{lem:main-core} and Some Discussions\label{sec:core}}

Before proving Lemma \ref{lem:main-core}, we need the following technique
tool, which is based on the regret analysis of the AdaGrad algorithm
as mentioned in Subsection \ref{subsec:normalization}.
\begin{lem}
\label{lem:OLO}(Based on Lemma 2 in \citet{pmlr-v65-rakhlin17a})
Given a sequence of vectors $\bv_{t}\in\R^{d},\forall t\in\N$, then
there exists a sequence of vectors $\bw_{t}\in\R^{d}$ such that $\left\Vert \bw_{t}\right\Vert \leq1$
and every $\bw_{t}$ only depends on $\bv_{1}$ to $\bv_{t-1}$ satisfying
\[
\left\Vert \sum_{t=1}^{T}\bv_{t}\right\Vert \le2\sqrt{2\sum_{t=1}^{T}\left\Vert \bv_{t}\right\Vert ^{2}}-\sum_{t=1}^{T}\left\langle \bv_{t},\bw_{t}\right\rangle ,\forall T\in\N.
\]
\end{lem}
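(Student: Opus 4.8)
The plan is to realize the predictable sequence $\bw_t$ as the iterates of an online linear optimization (OLO) algorithm --- adaptive online gradient descent, i.e.\ AdaGrad --- played over the unit ball $\B\defeq\{\bw\in\R^{d}:\left\Vert \bw\right\Vert \le1\}$ against the linear losses $\bw\mapsto\langle\bv_{t},\bw\rangle$. Rearranged, the inequality to be proved reads $\left\Vert \sum_{t=1}^{T}\bv_{t}\right\Vert +\sum_{t=1}^{T}\langle\bv_{t},\bw_{t}\rangle\le2\sqrt{2\sum_{t=1}^{T}\left\Vert \bv_{t}\right\Vert ^{2}}$, and this is exactly the assertion that such an algorithm has regret at most $2\sqrt{2\sum_{t=1}^{T}\left\Vert \bv_{t}\right\Vert ^{2}}$ against \emph{every} comparator in $\B$, once one observes that $\min_{\left\Vert \bu\right\Vert \le1}\sum_{t=1}^{T}\langle\bv_{t},\bu\rangle=-\left\Vert \sum_{t=1}^{T}\bv_{t}\right\Vert $, with minimizer $\bu^{\star}=-\sum_{t=1}^{T}\bv_{t}/\left\Vert \sum_{t=1}^{T}\bv_{t}\right\Vert $ when $\sum_{t=1}^{T}\bv_{t}\neq\bzero$ (and the claim being immediate otherwise, taking $\bu=\bzero$).

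Concretely, I would set $\bw_{1}\defeq\bzero$ and, for $t\ge1$, $\bw_{t+1}\defeq\Pi_{\B}\!\left(\bw_{t}-\eta_{t}\bv_{t}\right)$ with the adaptive stepsize $\eta_{t}\defeq\sqrt{2}/\sqrt{\sum_{s=1}^{t}\left\Vert \bv_{s}\right\Vert ^{2}}$, where $\Pi_{\B}$ is the Euclidean projection onto $\B$ (and where one sets $\bw_{t+1}=\bw_{t}$ while $\bv_{1}=\dots=\bv_{t}=\bzero$, which costs nothing on either side of the target inequality). Since $\eta_{t}$ depends only on $\bv_{1},\dots,\bv_{t}$, each $\bw_{t}$ depends only on $\bv_{1},\dots,\bv_{t-1}$, as required, and $\left\Vert \bw_{t}\right\Vert \le1$ by construction.

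The regret estimate is the standard analysis of projected gradient descent with a non-increasing stepsize. For any $\bu\in\B$, nonexpansiveness of $\Pi_{\B}$ gives the per-step bound $\langle\bv_{t},\bw_{t}-\bu\rangle\le\frac{\left\Vert \bw_{t}-\bu\right\Vert ^{2}-\left\Vert \bw_{t+1}-\bu\right\Vert ^{2}}{2\eta_{t}}+\frac{\eta_{t}}{2}\left\Vert \bv_{t}\right\Vert ^{2}$; summing over $t$ and telescoping, using $\left\Vert \bw_{t}-\bu\right\Vert \le2$ and the monotonicity $1/\eta_{t}\ge1/\eta_{t-1}$, the first group of terms is bounded by $\frac{D^{2}}{2\eta_{T}}=\sqrt{2}\sqrt{\sum_{t=1}^{T}\left\Vert \bv_{t}\right\Vert ^{2}}$ with $D=2$ the diameter of $\B$, while the second group is bounded by $\frac{\sqrt{2}}{2}\sum_{t=1}^{T}\frac{\left\Vert \bv_{t}\right\Vert ^{2}}{\sqrt{\sum_{s\le t}\left\Vert \bv_{s}\right\Vert ^{2}}}\le\sqrt{2}\sqrt{\sum_{t=1}^{T}\left\Vert \bv_{t}\right\Vert ^{2}}$ via the classical inequality $\sum_{t}a_{t}/\sqrt{\sum_{s\le t}a_{s}}\le2\sqrt{\sum_{t}a_{t}}$. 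Adding the two yields $\sum_{t=1}^{T}\langle\bv_{t},\bw_{t}-\bu\rangle\le2\sqrt{2\sum_{t=1}^{T}\left\Vert \bv_{t}\right\Vert ^{2}}$ for every $T\in\N$ and every $\bu\in\B$; specializing $\bu=\bu^{\star}$ (per horizon $T$) gives the lemma.

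I do not expect a genuine obstacle here: the argument is pure OLO bookkeeping. The points requiring care are (i) keeping $\bw_{t}$ predictable, which is automatic since $\eta_{t}$ is permitted to see $\bv_{t}$ while $\bw_{t}$ is not; (ii) the degenerate cases $\sum_{s\le t}\left\Vert \bv_{s}\right\Vert ^{2}=0$ and $\sum_{t\le T}\bv_{t}=\bzero$; and (iii) pinning down the constant so it comes out exactly as $2\sqrt{2(\cdot)}$ rather than a larger multiple, which is what dictates the choice $\eta_{t}\propto\sqrt{2}/\sqrt{\sum_{s\le t}\left\Vert \bv_{s}\right\Vert ^{2}}$ over the ball of diameter $2$. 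Alternatively, one may invoke the corresponding AdaGrad regret bound of \citet{pmlr-v65-rakhlin17a} essentially verbatim.
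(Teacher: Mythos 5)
Your proposal is correct and matches the paper's own proof essentially step for step: the same AdaGrad/projected-OGD construction on the unit ball with stepsize $\sqrt{2}/\sqrt{\sum_{s\le t}\left\Vert \bv_{s}\right\Vert ^{2}}$, the same telescoping regret bound with the two groups each contributing $\sqrt{2}\sqrt{\sum_{t}\left\Vert \bv_{t}\right\Vert ^{2}}$, and the same final step of optimizing the comparator to produce $\left\Vert \sum_{t}\bv_{t}\right\Vert $. The only cosmetic difference is that you derive the per-step inequality from nonexpansiveness of the projection rather than from its first-order optimality condition; these are equivalent.
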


\begin{proof}
W.l.o.g., we assume $\left\Vert \bv_{1}\right\Vert >0$. Otherwise,
we let $\tau\defeq\argmin\left\{ t\in\N:\left\Vert \bv_{t}\right\Vert >0\right\} $,
set $\bw_{t}\defeq\mathbf{0}\in\R^{d},\forall t\in\left[\tau-1\right]$,
and start the following proof at time $\tau$.

Let $\bw_{1}\defeq\mathbf{0}$ and recursively define $\bw_{t+1}\defeq\Pi_{\B^{d}}\left(\bw_{t}-\gamma_{t}\bv_{t}\right)$
where $\Pi_{\B^{d}}$ denotes the Euclidean projection operator onto
the unit ball $\B^{d}$ in $\R^{d}$ and $\gamma_{t}\defeq\sqrt{\frac{2}{\sum_{s=1}^{t}\left\Vert \bv_{s}\right\Vert ^{2}}}$.
Clearly, $\left\Vert \bw_{t}\right\Vert \leq1$ and $\bw_{t}$ only
depends on $\bv_{1}$ to $\bv_{t-1}$ by construction. Next, observe
that
\[
\bw_{t+1}=\Pi_{\B^{d}}\left(\bw_{t}-\gamma_{t}\bv_{t}\right)=\argmin_{\bw\in\B^{d}}\left\Vert \bw-\bw_{t}+\gamma_{t}\bv_{t}\right\Vert ^{2},
\]
which by the optimality condition implies for any $\bu\in\B^{d}$,
\begin{align*}
\left\langle \bw_{t+1}-\bw_{t}+\gamma_{t}\bv_{t},\bw_{t+1}-\bu\right\rangle  & \leq0\\
\Rightarrow\left\langle \bv_{t},\bw_{t+1}-\bu\right\rangle  & \leq\frac{\left\langle \bw_{t}-\bw_{t+1},\bw_{t+1}-\bu\right\rangle }{\gamma_{t}}\\
 & =\frac{\left\Vert \bu-\bw_{t}\right\Vert ^{2}-\left\Vert \bu-\bw_{t+1}\right\Vert ^{2}-\left\Vert \bw_{t}-\bw_{t+1}\right\Vert ^{2}}{2\gamma_{t}}\\
\Rightarrow\left\langle \bv_{t},\bw_{t}-\bu\right\rangle  & \leq\frac{\left\Vert \bu-\bw_{t}\right\Vert ^{2}-\left\Vert \bu-\bw_{t+1}\right\Vert ^{2}-\left\Vert \bw_{t}-\bw_{t+1}\right\Vert ^{2}}{2\gamma_{t}}+\left\langle \bv_{t},\bw_{t}-\bw_{t+1}\right\rangle \\
 & \overset{(a)}{\leq}\frac{\left\Vert \bu-\bw_{t}\right\Vert ^{2}-\left\Vert \bu-\bw_{t+1}\right\Vert ^{2}}{2\gamma_{t}}+\frac{\gamma_{t}\left\Vert \bv_{t}\right\Vert ^{2}}{2},
\end{align*}
where $(a)$ is by the Arithmetic Mean-Geometric Mean inequality.
Hence, for any $\bu\in\B^{d}$ and $T\in\N$,
\begin{align}
\sum_{t=1}^{T}\left\langle \bv_{t},\bw_{t}-\bu\right\rangle  & \leq\sum_{t=1}^{T}\frac{\left\Vert \bu-\bw_{t}\right\Vert ^{2}-\left\Vert \bu-\bw_{t+1}\right\Vert ^{2}}{2\gamma_{t}}+\frac{\gamma_{t}\left\Vert \bv_{t}\right\Vert ^{2}}{2}\nonumber \\
 & =\frac{\left\Vert \bu-\bw_{1}\right\Vert ^{2}}{2\gamma_{1}}-\frac{\left\Vert \bu-\bw_{T+1}\right\Vert ^{2}}{2\gamma_{T}}+\sum_{t=2}^{T}\frac{\left\Vert \bu-\bw_{t}\right\Vert ^{2}}{2}\left(\frac{1}{\gamma_{t}}-\frac{1}{\gamma_{t-1}}\right)+\sum_{t=1}^{T}\frac{\gamma_{t}\left\Vert \bv_{t}\right\Vert ^{2}}{2}\nonumber \\
 & \overset{(b)}{\leq}\frac{2}{\gamma_{1}}+\sum_{t=2}^{T}2\left(\frac{1}{\gamma_{t}}-\frac{1}{\gamma_{t-1}}\right)+\sum_{t=1}^{T}\frac{\gamma_{t}\left\Vert \bv_{t}\right\Vert ^{2}}{2}=\frac{2}{\gamma_{T}}+\sum_{t=1}^{T}\frac{\gamma_{t}\left\Vert \bv_{t}\right\Vert ^{2}}{2},\label{eq:OLO-1}
\end{align}
where $(b)$ is due to $\frac{1}{\gamma_{t}}-\frac{1}{\gamma_{t-1}}\geq0,\forall t\geq2$
by the definition of $\gamma_{t}=\sqrt{\frac{2}{\sum_{s=1}^{t}\left\Vert \bv_{s}\right\Vert ^{2}}}$
and $\left\Vert \bu-\bw_{t}\right\Vert \leq2$ since $\bu$ and $\bw_{t}$
are both in $\B^{d}$. In addition, let $\frac{1}{\gamma_{0}}\defeq0$,
we notice that
\begin{equation}
\sum_{t=1}^{T}\frac{\gamma_{t}\left\Vert \bv_{t}\right\Vert ^{2}}{2}=\sum_{t=1}^{T}\gamma_{t}\left(\frac{1}{\gamma_{t}^{2}}-\frac{1}{\gamma_{t-1}^{2}}\right)\leq\sum_{t=1}^{T}2\left(\frac{1}{\gamma_{t}}-\frac{1}{\gamma_{t-1}}\right)=\frac{2}{\gamma_{T}}.\label{eq:OLO-2}
\end{equation}

Combining (\ref{eq:OLO-1}) and (\ref{eq:OLO-2}) and using $\gamma_{T}=\sqrt{\frac{2}{\sum_{t=1}^{T}\left\Vert \bv_{t}\right\Vert ^{2}}}$,
we have
\[
\left\langle \sum_{t=1}^{T}\bv_{t},-\bu\right\rangle \leq2\sqrt{2\sum_{t=1}^{T}\left\Vert \bv_{t}\right\Vert ^{2}}-\sum_{t=1}^{T}\left\langle \bv_{t},\bw_{t}\right\rangle ,\forall\bu\in\B^{d},T\in\N.
\]
Finally, we use $\max_{\bu\in\B^{d}}\left\langle \sum_{t=1}^{T}\bv_{t},-\bu\right\rangle =\left\Vert \sum_{t=1}^{T}\bv_{t}\right\Vert $
to obtain
\[
\left\Vert \sum_{t=1}^{T}\bv_{t}\right\Vert \le2\sqrt{2\sum_{t=1}^{T}\left\Vert \bv_{t}\right\Vert ^{2}}-\sum_{t=1}^{T}\left\langle \bv_{t},\bw_{t}\right\rangle ,\forall T\in\N.
\]
\end{proof}

Now we are ready to prove Lemma \ref{lem:main-core}.

\begin{proof}[Proof of Lemma \ref{lem:main-core}]
By Lemma \ref{lem:OLO}, there exists a sequence of random vectors
$\bw_{t}\in\R^{d}$ such that $\bw_{t}\in\F_{t-1}$ satisfying 
\[
\left\Vert \sum_{t=1}^{T}\bv_{t}\right\Vert \le2\sqrt{2\sum_{t=1}^{T}\left\Vert \bv_{t}\right\Vert ^{2}}-\sum_{t=1}^{T}\left\langle \bv_{t},\bw_{t}\right\rangle ,\forall T\in\N.
\]
We take expectations on both sides to get
\[
\E\left[\left\Vert \sum_{t=1}^{T}\bv_{t}\right\Vert \right]\leq2\sqrt{2}\E\left[\sqrt{\sum_{t=1}^{T}\left\Vert \bv_{t}\right\Vert ^{2}}\right]-\sum_{t=1}^{T}\E\left[\left\langle \bv_{t},\bw_{t}\right\rangle \right].
\]
By noticing that 
\[
\E\left[\left\langle \bv_{t},\bw_{t}\right\rangle \right]=\E\left[\E\left[\left\langle \bv_{t},\bw_{t}\right\rangle \mid\F_{t-1}\right]\right]=\E\left[\left\langle \E\left[\bv_{t}\mid\F_{t-1}\right],\bw_{t}\right\rangle \right]=0,
\]
we find
\[
\E\left[\left\Vert \sum_{t=1}^{T}\bv_{t}\right\Vert \right]\leq2\sqrt{2}\E\left[\sqrt{\sum_{t=1}^{T}\left\Vert \bv_{t}\right\Vert ^{2}}\right].
\]
Finally, by observing $\sqrt{\sum_{t=1}^{T}\left\Vert \bv_{t}\right\Vert ^{2}}\leq\left(\sum_{t=1}^{T}\left\Vert \bv_{t}\right\Vert ^{\p}\right)^{\frac{1}{\p}},\forall\p\in\left[1,2\right]$,
the proof is hence complete.
\end{proof}

Lastly, we briefly talk about the difference between our Lemma \ref{lem:main-core}
and Lemma 4 in \citet{NEURIPS2023_ca24eb48}, the latter of which
essentially states an inequality $\E\left[\left\Vert \sum_{t=1}^{T}\bv_{t}\right\Vert ^{\p}\right]\leq2T\sigma^{\p}$
under the same setting as in our Lemma \ref{lem:main-core} and an
additional condition $\E\left[\left\Vert \bv_{t}\right\Vert ^{\p}\right]\leq\sigma,\forall t\in\left[T\right]$.
Though it is possible to employ their inequality to prove convergence
under the traditional heavy-tailed noises assumption (i.e., $\sigma_{1}=0$
in Assumption \ref{assu:general-heavy-noise}), whether it is applicable
under our new Assumption \ref{assu:general-heavy-noise} remains unclear
since dealing with the new term in the generalized heavy-tailed noises
assumption requires a finer conditional expectation argument as shown
in the proof of Lemma \ref{lem:err-bound}. Hence, we believe that
our new Lemma \ref{lem:main-core} is more general.

\section{Full Theorems and Other Missing Proofs\label{sec:Full-Theorems}}

\subsection{Upper Bounds}
\begin{thm}
\label{thm:fixed-full-info}(Full version of Theorem \ref{thm:main-fixed-full-info})
Under Assumptions \ref{assu:lb}, \ref{assu:smooth}, \ref{assu:unbias}
and \ref{assu:general-heavy-noise}, let $\Delta_{1}\defeq F(\bx_{1})-F_{*}$,
then for any $T\in\N$, by taking
\begin{align*}
\beta_{t} & \equiv\beta=1-\min\left\{ 1,\max\left\{ \left(\frac{\Delta_{1}L_{1}B^{\frac{\p-1}{\p}}+\sigma_{0}+\sigma_{1}\left\Vert \nabla F(\bx_{1})\right\Vert }{\sigma_{0}T}\right)^{\frac{\p}{2\p-1}},\left(\frac{\Delta_{1}L_{0}B^{\frac{2(\p-1)}{\p}}}{\sigma_{0}^{2}T}\right)^{\frac{\p}{3\p-2}}\right\} \right\} ,\\
\eta_{t} & \equiv\eta=\min\left\{ \sqrt{\frac{(1-\beta)\Delta_{1}}{L_{0}T}},\frac{1-\beta}{8L_{1}}\right\} ,\quad B=\max\left\{ \left\lceil (16\sqrt{2}\sigma_{1})^{\frac{\p}{\p-1}}\right\rceil ,1\right\} ,
\end{align*}
Algorithm \ref{alg:NSGDM} guarantees
\begin{align*}
\frac{1}{T}\sum_{t=1}^{T}\E\left[\left\Vert \nabla F(\bx_{t})\right\Vert \right]= & \O\left(\frac{\Delta_{1}L_{1}+(\sigma_{0}+\sigma_{1}\left\Vert \nabla F(\bx_{1})\right\Vert )/B^{\frac{\p-1}{\p}}}{T}+\sqrt{\frac{\Delta_{1}L_{0}}{T}}\right.\\
 & \left.+\frac{(\Delta_{1}L_{1}+(\sigma_{0}+\sigma_{1}\left\Vert \nabla F(\bx_{1})\right\Vert )/B^{\frac{\p-1}{\p}})^{\frac{\p-1}{2\p-1}}\sigma_{0}^{\frac{\p}{2\p-1}}}{(BT)^{\frac{\p-1}{2\p-1}}}+\frac{(\Delta_{1}L_{0})^{\frac{\p-1}{3\p-2}}\sigma_{0}^{\frac{\p}{3\p-2}}}{(BT)^{\frac{\p-1}{3\p-2}}}\right).
\end{align*}
\end{thm}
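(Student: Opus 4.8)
The strategy is to combine the (full versions of the) descent inequality of Lemma~\ref{lem:main-descent-lemma} and the momentum error bound of Lemma~\ref{lem:main-err-bound}, now valid for $L_1>0$, $B\geq1$ and $\sigma_1\geq0$, with the martingale inequality of Lemma~\ref{lem:main-core}, and then to tune $\beta$ and $\eta$ by a two-parameter balancing. I would first establish the descent inequality: since the normalized update gives $\|\bx_{t+1}-\bx_t\|=\eta_t\equiv\eta\leq\frac{1-\beta}{8L_1}\leq\frac1{L_1}$, Lemma~\ref{lem:smooth-ineq} applies at every step, and combining it with the elementary estimate $\langle\nabla F(\bx_t),\bm_t/\|\bm_t\|\rangle\geq\|\nabla F(\bx_t)\|-2\|\bep_t\|$ and absorbing the extra $\frac{L_1\eta^2}{2}\|\nabla F(\bx_t)\|$ into $\eta\|\nabla F(\bx_t)\|$ (legitimate since $L_1\eta\leq\frac18$), telescoping over $t$ reproduces \eqref{eq:main-descent-lemma}, i.e., $\sum_t\eta\,\E\|\nabla F(\bx_t)\|\lesssim\Delta_1+L_0\eta^2T+\sum_t 2\eta\,\E\|\bep_t\|$.

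\textbf{Bounding $\E\|\bep_t\|$.} Using the decomposition of Lemma~\ref{lem:main-decomposition}, I would bound its three pieces. For $\E\|\bep_0\|$, I apply Lemma~\ref{lem:main-core} to the $B$ independent mean-zero samples $\bxi_1^i$, then Jensen on $(\cdot)^{1/\p}$ and Assumption~\ref{assu:general-heavy-noise}, giving $\E\|\bep_0\|\lesssim B^{-\frac{\p-1}{\p}}(\sigma_0+\sigma_1\|\nabla F(\bx_1)\|)$. The drift $\sum_s\beta_{s:t}\bD_s$ is controlled deterministically by $(L_0,L_1)$-smoothness and $\|\bx_s-\bx_{s-1}\|=\eta$: its $L_0$-part produces the $L_0\eta\sum_{s\leq t}\beta^{t-s}$ term, and its $L_1$-part, of size $\lesssim\eta L_1\sum_{s\leq t}\beta^{t-s+1}\E\|\nabla F(\bx_{s-1})\|$, will be moved to the left-hand side later using $\eta\leq\frac{1-\beta}{8L_1}$. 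The substantive piece is the noise term: I apply Lemma~\ref{lem:main-core} to the vector martingale difference sequence $\{\frac{(1-\beta_s)\beta_{s+1:t}}{B}\bxi_s^i\}_{s\leq t,\,i\leq B}$ (ordered lexicographically), and --- this is exactly why the ``expectation-outside'' form of Lemma~\ref{lem:main-core} matters --- I write $\|\bxi_s^i\|^{\p}=(\sigma_0^{\p}+\sigma_1^{\p}\|\nabla F(\bx_s)\|^{\p})(\zeta_s^i)^{\p}$ with the self-normalized $\zeta_s^i$ obeying $\E[(\zeta_s^i)^{\p}\mid\F_{s-1}]\leq1$. Splitting $\sigma_0^{\p}+\sigma_1^{\p}\|\nabla F(\bx_s)\|^{\p}$ into its two summands, the $\sigma_0$-part is bounded by Jensen, keeping both the $s$- and $i$-sums inside, which yields a $B^{-\frac{\p-1}{\p}}(1-\beta)^{\frac{\p-1}{\p}}\sigma_0$ contribution; the $\sigma_1$-part is bounded by the pointwise inequality $(\sum_s a_s)^{1/\p}\leq\sum_s a_s^{1/\p}$ applied \emph{before} the expectation, which makes $\|\nabla F(\bx_s)\|$ appear linearly, so that $\E[\|\nabla F(\bx_s)\|\,\zeta_s^i]\leq\E\|\nabla F(\bx_s)\|$ leaves a term $\lesssim\frac{\sigma_1}{B^{(\p-1)/\p}}\sum_{s\leq t}(1-\beta)\beta^{t-s}\E\|\nabla F(\bx_s)\|$. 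Collecting these gives the error bound on $\E\|\bep_t\|$ needed below.

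\textbf{Assembling and tuning.} Substituting the error bound into the descent inequality, summing $\sum_{t\geq s}\beta^{t-s}\leq\frac1{1-\beta}$, and choosing $B\geq(16\sqrt{2}\,\sigma_1)^{\p/(\p-1)}$ so that $4\sqrt2\,\sigma_1/B^{(\p-1)/\p}\leq\frac14$ --- together with $\eta\leq\frac{1-\beta}{8L_1}$ for the $L_1$-drift --- lets me absorb all contributions carrying $\E\|\nabla F(\bx_s)\|$ on the right into the left-hand side, leaving $\frac\eta2\sum_t\E\|\nabla F(\bx_t)\|\lesssim\Delta_1+\frac{L_0\eta^2T}{1-\beta}+\eta T\,\sigma_0(1-\beta)^{\frac{\p-1}{\p}}B^{-\frac{\p-1}{\p}}+\frac{\eta(\sigma_0+\sigma_1\|\nabla F(\bx_1)\|)B^{-\frac{\p-1}{\p}}}{1-\beta}$. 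Dividing by $\eta T$ and using $\eta=\min\{\sqrt{(1-\beta)\Delta_1/(L_0T)},(1-\beta)/(8L_1)\}$ leaves a bound of the form $\frac{\Delta_1L_1+(\sigma_0+\sigma_1\|\nabla F(\bx_1)\|)B^{-(\p-1)/\p}}{(1-\beta)T}+\sqrt{\frac{\Delta_1L_0}{(1-\beta)T}}+\frac{\sigma_0(1-\beta)^{(\p-1)/\p}}{B^{(\p-1)/\p}}$. Finally I optimize over $1-\beta\in(0,1]$: balancing the first term against the third gives $1-\beta\asymp\big(\frac{\Delta_1L_1B^{(\p-1)/\p}+\sigma_0+\sigma_1\|\nabla F(\bx_1)\|}{\sigma_0T}\big)^{\p/(2\p-1)}$, balancing the second against the third gives $1-\beta\asymp\big(\frac{\Delta_1L_0B^{2(\p-1)/\p}}{\sigma_0^2T}\big)^{\p/(3\p-2)}$, and taking the larger of these capped at $1$ --- the cap producing the deterministic $\Delta_1L_1/T+\sqrt{\Delta_1L_0/T}$ terms --- reproduces exactly the stated $\beta$ and the claimed rate.

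\textbf{Main obstacle.} The delicate step is the noise term when $\sigma_1>0$: the conditional $\p$-th moment bound in Assumption~\ref{assu:general-heavy-noise} contains the random gradient norm $\|\nabla F(\bx_s)\|$, and the routine move of pulling the expectation inside the $(\cdot)^{1/\p}$ turns its contribution into the $L^{\p}$-norm $(\E\|\nabla F(\bx_s)\|^{\p})^{1/\p}$, which cannot be absorbed into the degree-one quantity $\sum_t\eta_t\E\|\nabla F(\bx_t)\|$ on the left. Obtaining instead a bound linear in $\E\|\nabla F(\bx_s)\|$ --- via self-normalization and the expectation-outside form of Lemma~\ref{lem:main-core} --- while still extracting the batch gain $B^{-(\p-1)/\p}$ from the $\sigma_0$-part and calibrating $B$ and $\eta$ so that the leftover $\E\|\nabla F\|$-terms cancel, is where the real work lies; the rest is geometric-series bookkeeping and a two-parameter AM--GM-style optimization.
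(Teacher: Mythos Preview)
Your plan matches the paper's proof almost step for step: both combine the full descent lemma with the full error bound on $\E\|\bep_t\|$, sum via geometric series, absorb all $\E\|\nabla F(\bx_s)\|$-terms on the right using $\eta\le\frac{1-\beta}{8L_1}$ and $B\ge(16\sqrt2\,\sigma_1)^{\p/(\p-1)}$, and then optimize $\eta$ and $1-\beta$ by the same two-way balancing. The only technical divergence is in how the $\sigma_1$-contribution is extracted from the martingale sum after Lemma~\ref{lem:main-core}: the paper peels recursively (condition on $\F_{t-1}$, H\"older, Assumption~\ref{assu:general-heavy-noise}, split via $(a+b)^{1/\p}\le a^{1/\p}+b^{1/\p}$, iterate), whereas you self-normalize $\|\bxi_s^i\|=(\sigma_0^\p+\sigma_1^\p\|\nabla F(\bx_s)\|^\p)^{1/\p}\zeta_s^i$ and split globally. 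Both ideas work and give the same bound.

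One point to tighten: as written, you apply the pointwise subadditivity $(\sum a)^{1/\p}\le\sum a^{1/\p}$ to \emph{both} the $s$- and $i$-sums (you invoke $\E[\|\nabla F(\bx_s)\|\,\zeta_s^i]\le\E\|\nabla F(\bx_s)\|$ with a single $i$), and summing over $i=1,\dots,B$ then cancels the $1/B$ and leaves $\sigma_1\sum_s(1-\beta)\beta^{t-s}\E\|\nabla F(\bx_s)\|$ with \emph{no} $B^{-(\p-1)/\p}$ factor. That would make the absorption succeed only when $\sigma_1\le\tfrac{1}{16\sqrt2}$, regardless of $B$, so the stated choice of $B$ would be vacuous. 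To recover the $B^{-(\p-1)/\p}$ you claim (and need), apply subadditivity only across $s$, keep the $i$-sum inside, and use $\E\big[(\sum_i(\zeta_s^i)^\p)^{1/\p}\mid\F_{s-1}\big]\le B^{1/\p}$ by Jensen; this yields exactly $\tfrac{\sigma_1}{B^{(\p-1)/\p}}\sum_s(1-\beta)\beta^{t-s}\E\|\nabla F(\bx_s)\|$, matching the paper's recursive argument.
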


\begin{proof}
Because $\eta_{t}\equiv\eta\leq\frac{1-\beta}{8L_{1}}\leq\frac{1}{L_{1}},\forall t\in\left[T\right]$,
we can safely invoke Lemma \ref{lem:descent-lemma} with $\eta_{t}\equiv\eta,\forall t\in\left[T\right]$
to obtain
\begin{equation}
\sum_{t=1}^{T}\eta\E\left[\left\Vert \nabla F(\bx_{t})\right\Vert \right]\leq\Delta_{1}+\frac{\eta^{2}L_{0}T}{2}+\sum_{t=1}^{T}2\eta\E\left[\left\Vert \bep_{t}\right\Vert \right]+\sum_{t=1}^{T}\frac{\eta^{2}L_{1}}{2}\E\left[\left\Vert \nabla F(\bx_{t})\right\Vert \right].\label{eq:fixed-full-info-1}
\end{equation}

Next, by Lemma \ref{lem:err-bound} with $\eta_{t}\equiv\eta,\beta_{t}\equiv\beta,\forall t\in\left[T\right]$,
we have for any $t\in\left[T\right]$,
\begin{align*}
\E\left[\left\Vert \bep_{t}\right\Vert \right]\leq & \frac{2\sqrt{2}}{B^{\frac{\p-1}{\p}}}\left[\beta^{t}\left(\sigma_{0}+\sigma_{1}\left\Vert \nabla F(\bx_{1})\right\Vert \right)+(1-\beta)\left(\sum_{s=1}^{t}\beta^{\p(t-s)}\right)^{\frac{1}{\p}}\sigma_{0}\right]\\
 & +\sum_{s=2}^{t}\beta^{t-s+1}\left(L_{0}+L_{1}\E\left[\left\Vert \nabla F(\bx_{s-1})\right\Vert \right]\right)\eta+\frac{2\sqrt{2}}{B^{\frac{\p-1}{\p}}}\sum_{s=1}^{t}(1-\beta)\beta^{t-s}\sigma_{1}\E\left[\left\Vert \nabla F(\bx_{s})\right\Vert \right]\\
\leq & \frac{2\sqrt{2}}{B^{\frac{\p-1}{\p}}}\left[\beta^{t}\left(\sigma_{0}+\sigma_{1}\left\Vert \nabla F(\bx_{1})\right\Vert \right)+\frac{1-\beta}{(1-\beta^{\p})^{\frac{1}{\p}}}\sigma_{0}\right]+\frac{\beta\eta L_{0}}{1-\beta}\\
 & +\sum_{s=2}^{t}\beta^{t-s+1}\eta L_{1}\E\left[\left\Vert \nabla F(\bx_{s-1})\right\Vert \right]+\frac{2\sqrt{2}}{B^{\frac{\p-1}{\p}}}\sum_{s=1}^{t}(1-\beta)\beta^{t-s}\sigma_{1}\E\left[\left\Vert \nabla F(\bx_{s})\right\Vert \right]\\
\leq & \frac{2\sqrt{2}}{B^{\frac{\p-1}{\p}}}\left[\beta^{t}\left(\sigma_{0}+\sigma_{1}\left\Vert \nabla F(\bx_{1})\right\Vert \right)+(1-\beta)^{\frac{\p-1}{\p}}\sigma_{0}\right]+\frac{\beta\eta L_{0}}{1-\beta}\\
 & +\sum_{s=2}^{t}\beta^{t-s+1}\eta L_{1}\E\left[\left\Vert \nabla F(\bx_{s-1})\right\Vert \right]+\frac{2\sqrt{2}}{B^{\frac{\p-1}{\p}}}\sum_{s=1}^{t}(1-\beta)\beta^{t-s}\sigma_{1}\E\left[\left\Vert \nabla F(\bx_{s})\right\Vert \right],
\end{align*}
where we use $\beta^{\p}\leq\beta$ when $\p\geq1$ and $\beta\leq1$
in the last step. As such, we know 
\begin{align}
\sum_{t=1}^{T}2\eta\E\left[\left\Vert \bep_{t}\right\Vert \right]\leq & \frac{4\sqrt{2}\eta}{B^{\frac{\p-1}{\p}}}\sum_{t=1}^{T}\left[\beta^{t}\left(\sigma_{0}+\sigma_{1}\left\Vert \nabla F(\bx_{1})\right\Vert \right)+(1-\beta)^{\frac{\p-1}{\p}}\sigma_{0}\right]+\sum_{t=1}^{T}\frac{2\beta\eta^{2}L_{0}}{1-\beta}\nonumber \\
 & +\sum_{t=1}^{T}\sum_{s=2}^{t}2\beta^{t-s+1}\eta^{2}L_{1}\E\left[\left\Vert \nabla F(\bx_{s-1})\right\Vert \right]+\frac{4\sqrt{2}\eta}{B^{\frac{\p-1}{\p}}}\sum_{t=1}^{T}\sum_{s=1}^{t}(1-\beta)\beta^{t-s}\sigma_{1}\E\left[\left\Vert \nabla F(\bx_{s})\right\Vert \right]\nonumber \\
\leq & \frac{4\sqrt{2}\eta}{B^{\frac{\p-1}{\p}}}\left[\frac{\beta\left(\sigma_{0}+\sigma_{1}\left\Vert \nabla F(\bx_{1})\right\Vert \right)}{1-\beta}+(1-\beta)^{\frac{\p-1}{\p}}T\sigma_{0}\right]+\frac{2\beta\eta^{2}L_{0}T}{1-\beta}\nonumber \\
 & +\sum_{t=1}^{T}\left(\frac{2\beta\eta^{2}L_{1}}{1-\beta}+\frac{4\sqrt{2}\eta\sigma_{1}}{B^{\frac{\p-1}{\p}}}\right)\E\left[\left\Vert \nabla F(\bx_{t})\right\Vert \right],\label{eq:fixed-full-info-2}
\end{align}
where in the last step we use $\sum_{t=1}^{T}\sum_{s=i}^{t}\cdot=\sum_{s=i}^{T}\sum_{t=s}^{T}\cdot\leq\sum_{s=i}^{T}\sum_{t=s}^{\infty}\cdot$
when the summands are non-negative for $i\in\left\{ 1,2\right\} $.

Now plugging (\ref{eq:fixed-full-info-2}) into (\ref{eq:fixed-full-info-1})
to get
\begin{align}
 & \sum_{t=1}^{T}\eta\E\left[\left\Vert \nabla F(\bx_{t})\right\Vert \right]\nonumber \\
\leq & \Delta_{1}+\frac{(1+3\beta)\eta^{2}L_{0}T}{2(1-\beta)}+\frac{4\sqrt{2}\eta}{B^{\frac{\p-1}{\p}}}\left[\frac{\beta\left(\sigma_{0}+\sigma_{1}\left\Vert \nabla F(\bx_{1})\right\Vert \right)}{1-\beta}+(1-\beta)^{\frac{\p-1}{\p}}T\sigma_{0}\right]\nonumber \\
 & +\sum_{t=1}^{T}\left(\frac{(1+3\beta)\eta^{2}L_{1}}{2(1-\beta)}+\frac{4\sqrt{2}\eta\sigma_{1}}{B^{\frac{\p-1}{\p}}}\right)\E\left[\left\Vert \nabla F(\bx_{t})\right\Vert \right]\nonumber \\
\overset{(a)}{\leq} & \Delta_{1}+\frac{2\eta^{2}L_{0}T}{1-\beta}+\frac{4\sqrt{2}\eta}{B^{\frac{\p-1}{\p}}}\left[\frac{\sigma_{0}+\sigma_{1}\left\Vert \nabla F(\bx_{1})\right\Vert }{1-\beta}+(1-\beta)^{\frac{\p-1}{\p}}T\sigma_{0}\right]\nonumber \\
 & +\sum_{t=1}^{T}\left(\frac{2\eta^{2}L_{1}}{1-\beta}+\frac{4\sqrt{2}\eta\sigma_{1}}{B^{\frac{\p-1}{\p}}}\right)\E\left[\left\Vert \nabla F(\bx_{t})\right\Vert \right]\nonumber \\
\overset{(b)}{\leq} & \Delta_{1}+\frac{2\eta^{2}L_{0}T}{1-\beta}+\frac{4\sqrt{2}\eta}{B^{\frac{\p-1}{\p}}}\left[\frac{\sigma_{0}+\sigma_{1}\left\Vert \nabla F(\bx_{1})\right\Vert }{1-\beta}+(1-\beta)^{\frac{\p-1}{\p}}T\sigma_{0}\right]+\sum_{t=1}^{T}\frac{\eta}{2}\E\left[\left\Vert \nabla F(\bx_{t})\right\Vert \right],\label{eq:fixed-full-info-3}
\end{align}
where we use $\beta\leq1$ in $(a)$ and $\eta\leq\frac{1-\beta}{8L_{1}},B\geq(16\sqrt{2}\sigma_{1})^{\frac{\p}{\p-1}}$
in $(b)$. We observe that (\ref{eq:fixed-full-info-3}) implies
\begin{align}
 & \sum_{t=1}^{T}\E\left[\left\Vert \nabla F(\bx_{t})\right\Vert \right]\nonumber \\
\leq & \frac{2\Delta_{1}}{\eta}+\frac{4\eta L_{0}T}{1-\beta}+\frac{8\sqrt{2}}{B^{\frac{\p-1}{\p}}}\left[\frac{\sigma_{0}+\sigma_{1}\left\Vert \nabla F(\bx_{1})\right\Vert }{1-\beta}+(1-\beta)^{\frac{\p-1}{\p}}T\sigma_{0}\right]\label{eq:fixed-full-info-4}\\
\overset{(c)}{=} & \O\left(\frac{\Delta_{1}L_{1}+(\sigma_{0}+\sigma_{1}\left\Vert \nabla F(\bx_{1})\right\Vert )/B^{\frac{\p-1}{\p}}}{1-\beta}+\sqrt{\frac{\Delta L_{0}T}{1-\beta}}+\frac{(1-\beta)^{\frac{\p-1}{\p}}\sigma_{0}T}{B^{\frac{\p-1}{\p}}}\right)\nonumber \\
\overset{(d)}{=} & \O\left(\Delta_{1}L_{1}+(\sigma_{0}+\sigma_{1}\left\Vert \nabla F(\bx_{1})\right\Vert )/B^{\frac{\p-1}{\p}}+\sqrt{\Delta L_{0}T}\right.\nonumber \\
 & \left.+\frac{(\Delta_{1}L_{1}+(\sigma_{0}+\sigma_{1}\left\Vert \nabla F(\bx_{1})\right\Vert )/B^{\frac{\p-1}{\p}})^{\frac{\p-1}{2\p-1}}(\sigma_{0}T)^{\frac{\p}{2\p-1}}}{B^{\frac{\p-1}{2\p-1}}}+\frac{(\Delta_{1}L_{0})^{\frac{\p-1}{3\p-2}}\sigma_{0}^{\frac{\p}{3\p-2}}T^{\frac{2\p-1}{3\p-2}}}{B^{\frac{\p-1}{3\p-2}}}\right),\label{eq:fixed-full-info-5}
\end{align}
where we plug in $\eta=\min\left\{ \sqrt{\frac{(1-\beta)\Delta_{1}}{L_{0}T}},\frac{1-\beta}{8L_{1}}\right\} $
in $(c)$ and use the following estimation in $(d)$: Let $U\defeq\left(\frac{\Delta_{1}L_{1}B^{\frac{\p-1}{\p}}+\sigma_{0}+\sigma_{1}\left\Vert \nabla F(\bx_{1})\right\Vert }{\sigma_{0}T}\right)^{\frac{\p}{2\p-1}}$
and $V\defeq\left(\frac{\Delta_{1}L_{0}B^{\frac{2(\p-1)}{\p}}}{\sigma_{0}^{2}T}\right)^{\frac{\p}{3\p-2}}$,
we know $1-\beta=\min\left\{ 1,\max\left\{ U,V\right\} \right\} $.
Then there is
\begin{align*}
 & \frac{C_{1}}{1-\beta}+\sqrt{\frac{C_{2}}{1-\beta}}+C_{3}(1-\beta)^{\frac{\p-1}{\p}}\\
\le & C_{1}\left(1+\frac{1}{\max\left\{ U,V\right\} }\right)+\sqrt{C_{2}\left(1+\frac{1}{\max\left\{ U,V\right\} }\right)}+C_{3}\left(\max\left\{ U,V\right\} \right)^{\frac{\p-1}{\p}}\\
\le & C_{1}\left(1+\frac{1}{U}\right)+\sqrt{C_{2}\left(1+\frac{1}{V}\right)}+C_{3}\left(U^{\frac{\p-1}{\p}}+V^{\frac{\p-1}{\p}}\right)\\
\le & C_{1}+\sqrt{C_{2}}+\frac{C_{1}}{U}+\sqrt{\frac{C_{2}}{V}}+C_{3}\left(U^{\frac{\p-1}{\p}}+V^{\frac{\p-1}{\p}}\right),
\end{align*}
where
\begin{eqnarray*}
C_{1}\defeq\Delta_{1}L_{1}+(\sigma_{0}+\sigma_{1}\left\Vert \nabla F(\bx_{1})\right\Vert )/B^{\frac{\p-1}{\p}}, & C_{2}\defeq\Delta L_{0}T, & C_{3}\defeq\frac{\sigma_{0}T}{B^{\frac{\p-1}{\p}}}.
\end{eqnarray*}
Finally, divide both sides of (\ref{eq:fixed-full-info-5}) by $T$
to obtain the desired result.
\end{proof}

\begin{thm}
\label{thm:fixed-partial-info}(Full version of Theorem \ref{thm:main-fixed-partial-info})
Under Assumptions \ref{assu:lb}, \ref{assu:smooth}, \ref{assu:unbias}
and \ref{assu:general-heavy-noise}, let $\Delta_{1}\defeq F(\bx_{1})-F_{*}$,
then for any $T\in\N$, by taking
\begin{eqnarray*}
\beta_{t}\equiv\beta=1-\frac{1}{T^{\frac{1}{2}}}, & \eta_{t}\equiv\eta=\min\left\{ \frac{1}{T^{\frac{3}{4}}},\frac{1}{8L_{1}T^{\frac{1}{2}}}\right\} , & B=\max\left\{ \left\lceil (16\sqrt{2}\sigma_{1})^{\frac{\p}{\p-1}}\right\rceil ,1\right\} ,
\end{eqnarray*}
Algorithm \ref{alg:NSGDM} guarantees
\[
\frac{1}{T}\sum_{t=1}^{T}\E\left[\left\Vert \nabla F(\bx_{t})\right\Vert \right]=\O\left(\frac{\Delta_{1}L_{1}+(\sigma_{0}+\sigma_{1}\left\Vert \nabla F(\bx_{1})\right\Vert )/B^{\frac{\p-1}{\p}}}{\sqrt{T}}+\frac{\Delta_{1}+L_{0}}{T^{\frac{1}{4}}}+\frac{\sigma_{0}}{(BT)^{\frac{\p-1}{\p}}}\right).
\]
In particular, if $\sigma_{1}\leq\frac{1}{16\sqrt{2}}$, we can always
set $B=1$ to get rid of the tail index $\p$.
\end{thm}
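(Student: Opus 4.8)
The plan is to run, almost line for line, the argument already used for Theorem~\ref{thm:fixed-full-info}, and to deviate from it only in the final step, where the problem-dependent tuning of $\beta,\eta$ is replaced by a fixed, $\p$-oblivious choice. For Steps~1--2 I would proceed exactly as there. Since the prescribed parameters give $1-\beta=T^{-1/2}$ and $\eta=\min\{T^{-3/4},(8L_{1}T^{1/2})^{-1}\}\leq\tfrac{1-\beta}{8L_{1}}\leq L_{1}^{-1}$, the descent Lemma~\ref{lem:descent-lemma} applies with the constant stepsize and yields~\eqref{eq:fixed-full-info-1}. Feeding in the error estimate of Lemma~\ref{lem:err-bound} (with $\beta_{t}\equiv\beta$, $\eta_{t}\equiv\eta$), bounding the geometric sums crudely through $\beta^{\p}\leq\beta\leq1$ (so $\sum_{s}\beta^{\p(t-s)}\leq(1-\beta^{\p})^{-1}$ and $(1-\beta)(1-\beta^{\p})^{-1/\p}\leq(1-\beta)^{(\p-1)/\p}$) and exchanging the double summation $\sum_{t\leq T}\sum_{s\leq t}=\sum_{s\leq T}\sum_{t\geq s}$, reproduces~\eqref{eq:fixed-full-info-2}--\eqref{eq:fixed-full-info-3}. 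The self-referential terms $\sum_{t}\big(\tfrac{2\eta^{2}L_{1}}{1-\beta}+\tfrac{4\sqrt{2}\eta\sigma_{1}}{B^{(\p-1)/\p}}\big)\E[\|\nabla F(\bx_{t})\|]$ on the right-hand side are swallowed by $\sum_{t}\tfrac{\eta}{2}\E[\|\nabla F(\bx_{t})\|]$ exactly because $\eta\leq\tfrac{1-\beta}{8L_{1}}$ and $B\geq(16\sqrt{2}\sigma_{1})^{\p/(\p-1)}$; this is the sole point at which $B$ (and hence the only point at which $\p$ could conceivably be needed) enters. One is then left with~\eqref{eq:fixed-full-info-4}, i.e.
\[
\sum_{t=1}^{T}\E\left[\left\Vert \nabla F(\bx_{t})\right\Vert \right]\leq\frac{2\Delta_{1}}{\eta}+\frac{4\eta L_{0}T}{1-\beta}+\frac{8\sqrt{2}}{B^{\frac{\p-1}{\p}}}\Big(\frac{\sigma_{0}+\sigma_{1}\left\Vert \nabla F(\bx_{1})\right\Vert }{1-\beta}+(1-\beta)^{\frac{\p-1}{\p}}T\sigma_{0}\Big).
\]

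Step~3 is where the proof differs: instead of minimizing the right-hand side over $\p,\Delta_{1},L_{0},\sigma_{0}$, I would simply substitute $1-\beta=T^{-1/2}$ and $\eta=\min\{T^{-3/4},(8L_{1}T^{1/2})^{-1}\}$ and divide by $T$. Using $\eta^{-1}\leq T^{3/4}+8L_{1}T^{1/2}$, the first term gives $\O(\Delta_{1}T^{-1/4}+\Delta_{1}L_{1}T^{-1/2})$; the second gives $\O(L_{0}T^{-1/4})$ since $\eta\leq T^{-3/4}$; the initialization term gives $\O\big((\sigma_{0}+\sigma_{1}\|\nabla F(\bx_{1})\|)B^{-(\p-1)/\p}T^{-1/2}\big)$; and the heavy-tailed term gives $\O\big(\sigma_{0}B^{-(\p-1)/\p}(1-\beta)^{(\p-1)/\p}\big)$, which under $1-\beta=T^{-1/2}$ is the announced $\O(T^{(1-\p)/(2\p)})$ contribution. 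Summing the four pieces yields the stated bound. For the ``in particular'' clause, when $\sigma_{1}\leq(16\sqrt{2})^{-1}$ one has $(16\sqrt{2}\sigma_{1})^{\p/(\p-1)}\leq1$, hence $B=1$, and then none of $\beta,\eta,B$ --- nor anything in Steps~1--2 --- depends on $\p$ (or on $\Delta_{1},L_{0},\sigma_{0}$), which is precisely the claimed freedom from the tail index.

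I expect no genuine obstacle here: all the structural work has already been absorbed into Lemmas~\ref{lem:descent-lemma} and~\ref{lem:err-bound}, and what remains is careful bookkeeping --- the geometric-sum estimates and the verification that the $\E[\|\nabla F(\bx_{t})\|]$ feedback terms are absorbable for this particular $(\beta,\eta,B)$. The one subtlety worth flagging, and the real reason the recipe succeeds without knowing $\p$, is that Lemma~\ref{lem:err-bound} (through Lemma~\ref{lem:main-core}) must be used with the expectation kept \emph{outside} the $\p$-th root: the factor $(1-\beta)^{(\p-1)/\p}$ then emerges on its own from the fixed choice $1-\beta=T^{-1/2}$ instead of having to be tuned, so that the very same $(\beta,\eta)$ one would pick while pretending $\p=2$ still produces a finite, non-vacuous guarantee for every $\p\in(1,2]$.
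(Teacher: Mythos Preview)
Your proposal is correct and follows essentially the same route as the paper: the paper's proof simply observes that inequality~\eqref{eq:fixed-full-info-4} from the proof of Theorem~\ref{thm:fixed-full-info} remains valid under the present choice of $(\beta,\eta,B)$ (since $\eta=\min\{\sqrt{(1-\beta)/T},(1-\beta)/(8L_{1})\}$), then substitutes $1-\beta=T^{-1/2}$ and divides by $T$, exactly as you outline. Your additional commentary on why the absorption of the feedback terms goes through and why the $\p$-oblivious tuning still succeeds is accurate and matches the paper's intent.
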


\begin{rem}
We note that it is possible to improve the threshold $\frac{1}{16\sqrt{2}}$
to a slightly bigger constant, but this still cannot help us to remove
the requirement of needing $\p$ when $\sigma_{1}$ becomes larger.
Thus, we do not put further effort into optimizing this constant but
keep it as it is.
\end{rem}

\begin{proof}
Note that (\ref{eq:fixed-full-info-4}) still holds under current
parameter choices since $\eta=\min\left\{ \frac{1}{T^{\frac{3}{4}}},\frac{1}{8L_{1}T^{\frac{1}{2}}}\right\} =\min\left\{ \sqrt{\frac{1-\beta}{T}},\frac{1-\beta}{8L_{1}}\right\} $.
Hence, we have
\begin{align*}
 & \sum_{t=1}^{T}\E\left[\left\Vert \nabla F(\bx_{t})\right\Vert \right]\\
\leq & \O\left(\frac{\Delta_{1}}{\eta}+\frac{\eta L_{0}T}{1-\beta}+\frac{1}{B^{\frac{\p-1}{\p}}}\left[\frac{\sigma_{0}+\sigma_{1}\left\Vert \nabla F(\bx_{1})\right\Vert }{1-\beta}+(1-\beta)^{\frac{\p-1}{\p}}T\sigma_{0}\right]\right)\\
\overset{(a)}{=} & \O\left(\frac{\Delta_{1}L_{1}+(\sigma_{0}+\sigma_{1}\left\Vert \nabla F(\bx_{1})\right\Vert )/B^{\frac{\p-1}{\p}}}{1-\beta}+(\Delta_{1}+L_{0})\sqrt{\frac{T}{1-\beta}}+\frac{(1-\beta)^{\frac{\p-1}{\p}}\sigma_{0}T}{B^{\frac{\p-1}{\p}}}\right)\\
\overset{(b)}{=} & \O\left(\left[\Delta_{1}L_{1}+(\sigma_{0}+\sigma_{1}\left\Vert \nabla F(\bx_{1})\right\Vert )/B^{\frac{\p-1}{\p}}\right]\sqrt{T}+(\Delta_{1}+L_{0})T^{\frac{3}{4}}+\frac{\sigma_{0}T^{\frac{\p+1}{2\p}}}{B^{\frac{\p-1}{\p}}}\right),
\end{align*}
where we use $\eta=\min\left\{ \frac{1}{T^{\frac{3}{4}}},\frac{1}{8L_{1}T^{\frac{1}{2}}}\right\} =\min\left\{ \sqrt{\frac{1-\beta}{T}},\frac{1-\beta}{8L_{1}}\right\} $
in $(a)$ and $1-\beta=\frac{1}{T^{\frac{1}{2}}}$ in $(b)$. We divide
both sides by $T$ to obtain the desired result.
\end{proof}

\subsection{Lower Bound}

In this subsection, we will prove the lower bound, Theorem \ref{thm:main-lower-bound}.
The proof is a simple variation of \citet{zhang2020adaptive}, which
itself is based on \citet{carmon2020lower,arjevani2023lower}.

\begin{proof}[Proof of Theorem \ref{thm:main-lower-bound}]
For any $\bx\in\R^{d}$ and $\alpha\in\left[0,1\right]$, we denote
by $\prog_{\alpha}(\bx)$ the highest index whose entry is $\alpha$-far
from $0$, i.e.,
\[
\prog_{\alpha}(\bx)\defeq\max\left\{ i\in\left[d\right]:\left|\bx[i]\right|>\alpha\right\} \text{ where }\max\emptyset\defeq0.
\]
Now given $d\in\N$, we introduce the following underlying function
originally proposed by \citet{carmon2020lower}.
\[
f_{d}(\bx)\defeq-\Psi(1)\Phi(\bx[1])+\sum_{i=2}^{d}\Psi(-\bx[i-1])\Phi(-\bx[i])-\Psi(\bx[i-1])\Phi(\bx[i]),
\]
where 
\[
\Psi(t)\defeq\begin{cases}
0 & t\leq\frac{1}{2}\\
\exp(1-(2t-1)^{-2}) & t>\frac{1}{2}
\end{cases},\quad\Phi(t)\defeq\sqrt{e}\int_{-\infty}^{t}\exp(-\tau^{2}/2)\d\tau.
\]
By Lemma 2 in \citet{arjevani2023lower}, $f_{d}$ admits the following
properties:
\begin{enumerate}
\item $f_{d}(\bzero)-f_{d,*}\leq\delta d$, where $f_{d,*}\defeq\inf_{\bx\in\R^{d}}f_{d}(\bx)$
and $\delta=12$.
\item $f_{d}$ is $\ell$-smooth, where $\ell=152$.
\item For all $\bx\in\R^{d}$, $\left\Vert \nabla f_{d}(\bx)\right\Vert _{\infty}\leq\gamma$,
where $\gamma=23$.
\item For all $\bx\in\R^{d}$, $\prog_{0}(\nabla f_{d}(\bx))\leq\prog_{\frac{1}{2}}(\bx)+1$.
\item For all $\bx\in\R^{d}$ and $i\defeq\prog_{\frac{1}{2}}(\bx)$, $\nabla f_{d}(\bx)=\nabla f_{d}(\bx_{\leq1+i})$
and $\left[\nabla f_{d}(\bx)\right]_{\leq i}=\left[\nabla f_{d}(\bx_{\leq i})\right]_{\leq i}$,
where $\by_{\leq i}[j]\defeq\by[j]\1[j\leq i]$.
\item For all $\bx\in\R^{d}$, if $\prog_{1}(\bx)<d$, then $\left\Vert \nabla f_{d}(\bx)\right\Vert >1$.
\end{enumerate}
Now we consider the following stochastic oracle introduced by \citet{arjevani2023lower},
\[
\boldsymbol{h}_{d}(\bx,z)[i]\defeq\nabla f_{d}(\bx)[i]\left(1+\1\left[i>\prog_{\frac{1}{4}}(\bx)\right]\left(\frac{z}{q}-1\right)\right),\forall i\in\left[d\right],
\]
where $z=\mathrm{Bernoulli}(q)$ and $q\in\left[0,1\right]$ will
be specified later. As one can check, $\E_{z}\left[\boldsymbol{h}_{d}(\bx,z)\right]=\nabla f_{d}(\bx),\forall\bx\in\R^{d}$.
Moreover, by Lemma 3 in \citet{arjevani2023lower}, we know $\boldsymbol{h}_{d}$
is a probability-$q$ zero-chain (see Definition 2 in \citet{arjevani2023lower}
for what it is) and satisfies almost surely
\begin{equation}
\left\Vert \boldsymbol{h}_{d}(\bx,z)-\nabla f_{d}(\bx)\right\Vert \leq\gamma\left|\frac{z}{q}-1\right|,\forall\bx\in\R^{d}.\label{eq:lower-1}
\end{equation}

Next, given $\p\in\left(1,2\right]$, $\Delta_{1},L_{0},\sigma_{0}>0$,
and small enough $\varepsilon$, we define $d\defeq\left\lfloor \frac{\Delta_{1}L_{0}}{4\delta\ell\varepsilon^{2}}\right\rfloor $
and
\[
F_{d}(\bx)\defeq\frac{L_{0}\lambda^{2}}{\ell}f_{d}\left(\frac{\bx}{\lambda}\right),\text{ where }\lambda\defeq\frac{2\ell\varepsilon}{L_{0}}.
\]
Moreover, we let
\[
\bg_{d}(\bx,z)\defeq\frac{L_{0}\lambda}{\ell}\boldsymbol{h}_{d}\left(\frac{\bx}{\lambda},z\right),\text{ and }q\defeq\left(\frac{4\gamma\varepsilon}{\sigma_{0}}\right)^{\frac{\p}{\p-1}}.
\]
$q\leq1$ can be true since we assume that $\varepsilon$ is small
enough. 

Note that $F_{d}$ is lower bounded since $f_{d}$ is lower bounded
and thus satisfies Assumption \ref{assu:lb}. In addition, we have
$\nabla F_{d}(\bx)=\frac{L_{0}\lambda}{\ell}\nabla f_{d}\left(\frac{\bx}{\lambda}\right)$,
which implies $F_{d}$ is $L_{0}$-smooth because $f_{d}$ is $\ell$-smooth.
So $F_{d}$ also satisfies Assumption \ref{assu:smooth} with $L_{1}=0$.
Moreover, we can find
\[
F_{d}(\bzero)-\inf_{\bx\in\R^{d}}F_{d}(\bx)=\frac{L_{0}\lambda^{2}}{\ell}\left(f_{d}\left(\frac{\bx}{\lambda}\right)-f_{d,*}\right)\leq\frac{L_{0}\lambda^{2}}{\ell}\cdot\delta d\leq\frac{L_{0}\lambda^{2}}{\ell}\cdot\delta\cdot\frac{\Delta_{1}L_{0}}{4\delta\ell\varepsilon^{2}}=\Delta_{1}.
\]
Now let us verify
\[
\E_{z}\left[\bg_{d}(\bx,z)\right]=\E_{z}\left[\frac{L_{0}\lambda}{\ell}\boldsymbol{h}_{d}\left(\frac{\bx}{\lambda},z\right)\right]=\frac{L_{0}\lambda}{\ell}\nabla f_{d}\left(\frac{\bx}{\lambda}\right)=\nabla F_{d}(\bx).
\]
Hence, $\bg_{d}(\bx,z)$ satisfies Assumption \ref{assu:unbias}.
Lastly, we know
\begin{align*}
\E_{z}\left[\left\Vert \bg_{d}(\bx,z)-\nabla F_{d}(\bx)\right\Vert ^{\p}\right] & =\left(\frac{L_{0}\lambda}{\ell}\right)^{\p}\E_{z}\left[\left\Vert \boldsymbol{h}_{d}\left(\frac{\bx}{\lambda},z\right)-\nabla f_{d}\left(\frac{\bx}{\lambda}\right)\right\Vert ^{\p}\right]\overset{\eqref{eq:lower-1}}{\leq}\left(\frac{L_{0}\lambda\gamma}{\ell}\right)^{\p}\E\left[\left|\frac{z}{q}-1\right|^{\p}\right]\\
 & =\left(\frac{L_{0}\lambda\gamma}{\ell}\right)^{\p}\left(1-q+\frac{(1-q)^{\p}}{q^{\p-1}}\right)=(2\gamma\varepsilon)^{\p}(1-q)\frac{q^{\p-1}+(1-q)^{\p-1}}{q^{\p-1}}\\
 & \leq\frac{(4\gamma\varepsilon)^{\p}}{q^{\p-1}}=\sigma_{0}^{\p}.
\end{align*}
Thus, $\bg_{d}(\bx,z)$ satisfies Assumption \ref{assu:general-heavy-noise}
with $\sigma_{1}=0$.

Finally, by Lemma 1 in \citet{arjevani2023lower}, for any zero-respecting
algorithm starting from $\bx_{1}=\bzero$, with probability at least
$\frac{1}{2}$, $\prog_{0}(\bx_{t})<d,\forall t\leq\frac{d-1}{2q}$.
By noticing that $\prog_{1}\left(\frac{\bx_{t}}{\lambda}\right)\leq\prog_{0}\left(\frac{\bx_{t}}{\lambda}\right)=\prog_{0}(\bx_{t})<d$,
we then have with probability at least $\frac{1}{2}$, $\left\Vert \nabla f_{d}\left(\frac{\bx_{t}}{\lambda}\right)\right\Vert >1,\forall t\leq\frac{d-1}{2q}$,
which implies
\[
\E\left[\left\Vert \nabla F_{d}(\bx_{t})\right\Vert \right]=\frac{L_{0}\lambda}{\ell}\E\left[\left\Vert \nabla f_{d}\left(\frac{\bx_{t}}{\lambda}\right)\right\Vert \right]>\frac{L_{0}\lambda}{2\ell}=\varepsilon,\forall t\leq\frac{d-1}{2q}.
\]
Thus, to output an $\varepsilon$-stationary point, the algorithm
need at least the following number of iterations
\[
\frac{d-1}{2q}=\frac{1}{2}\left(\left\lfloor \frac{\Delta_{1}L_{0}}{4\delta\ell\varepsilon^{2}}\right\rfloor -1\right)\cdot\left(\frac{\sigma_{0}}{4\gamma\varepsilon}\right)^{\frac{\p}{\p-1}}=\Omega\left(\Delta_{1}L_{0}\sigma_{0}^{\frac{\p}{\p-1}}\varepsilon^{-\frac{3\p-2}{\p-1}}\right).
\]
\end{proof}

\subsection{Helpful Lemmas}

We first recall the notations as follows

\begin{align}
\bep_{t} & \defeq\begin{cases}
\bm{_{t}}-\nabla F(\bx_{t}) & t\in\left[T\right]\\
\bm{_{0}}-\nabla F(\bx_{1}) & t=0
\end{cases},\label{eq:def-eps}\\
\bD_{t} & \defeq\1_{t\geq2}\left(\nabla F(\bx_{t-1})-\nabla F(\bx_{t})\right),\forall t\in\left[T\right],\label{eq:def-D}\\
\bxi_{t}^{i} & \defeq\bg_{t}^{i}-\nabla F(\bx_{t}),\forall i\in\left[B\right],\forall t\in\left[T\right],\label{eq:def-xi-i}\\
\bxi_{t} & \defeq\bg_{t}-\nabla F(\bx_{t})=\frac{1}{B}\sum_{i=1}^{B}\bxi_{t}^{i},\forall t\in\left[T\right].\label{eq:def-xi}
\end{align}
Now we are ready to start the proof.
\begin{lem}
\label{lem:descent-lemma}(Full version of Lemma \ref{lem:main-descent-lemma})
Under Assumptions \ref{assu:lb} and \ref{assu:smooth}, let $\Delta_{1}\defeq F(\bx_{1})-F_{*}$,
if $\eta_{t}\leq\frac{1}{L_{1}},\forall t\in\left[T\right]$, then
Algorithm \ref{alg:NSGDM} guarantees
\[
\sum_{t=1}^{T}\eta_{t}\E\left[\left\Vert \nabla F(\bx_{t})\right\Vert \right]\leq\Delta_{1}+\sum_{t=1}^{T}2\eta_{t}\E\left[\left\Vert \bep_{t}\right\Vert \right]+\sum_{t=1}^{T}\frac{L_{0}+L_{1}\E\left[\left\Vert \nabla F(\bx_{t})\right\Vert \right]}{2}\eta_{t}^{2}.
\]
\end{lem}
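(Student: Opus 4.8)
The plan is to establish a one-step descent inequality and then telescope. First I would check that the generalized smoothness inequality (Lemma \ref{lem:smooth-ineq}) applies to the consecutive iterates $\bx_t,\bx_{t+1}$: since $\bx_{t+1}-\bx_t=-\eta_t\frac{\bm_t}{\|\bm_t\|}$ (with the convention $\frac{\bzero}{\|\bzero\|}=\bzero$), we have $\|\bx_{t+1}-\bx_t\|\le\eta_t\le\frac{1}{L_1}$ by the hypothesis, so Lemma \ref{lem:smooth-ineq} yields
\[
F(\bx_{t+1})\le F(\bx_t)-\eta_t\left\langle\nabla F(\bx_t),\frac{\bm_t}{\|\bm_t\|}\right\rangle+\frac{L_0+L_1\|\nabla F(\bx_t)\|}{2}\eta_t^2 .
\]

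The heart of the argument is to lower bound the inner product $\langle\nabla F(\bx_t),\bm_t/\|\bm_t\|\rangle$ by $\|\nabla F(\bx_t)\|$ up to an error controlled by $\|\bep_t\|$. Writing $\nabla F(\bx_t)=\bm_t-\bep_t$, I would split
\[
\left\langle\nabla F(\bx_t),\frac{\bm_t}{\|\bm_t\|}\right\rangle=\|\bm_t\|-\left\langle\bep_t,\frac{\bm_t}{\|\bm_t\|}\right\rangle\ge\|\bm_t\|-\|\bep_t\|\ge\|\nabla F(\bx_t)\|-2\|\bep_t\|,
\]
using Cauchy--Schwarz and the reverse triangle inequality $\|\bm_t\|\ge\|\nabla F(\bx_t)\|-\|\bep_t\|$; the degenerate case $\bm_t=\bzero$ is handled separately, since then the left-hand side is $0$ while $\|\bep_t\|=\|\nabla F(\bx_t)\|$, so $\|\nabla F(\bx_t)\|-2\|\bep_t\|\le 0$ and the bound still holds. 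Substituting back and rearranging gives
\[
\eta_t\|\nabla F(\bx_t)\|\le F(\bx_t)-F(\bx_{t+1})+2\eta_t\|\bep_t\|+\frac{L_0+L_1\|\nabla F(\bx_t)\|}{2}\eta_t^2 .
\]

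Finally I would sum this over $t=1,\dots,T$, telescope $\sum_{t=1}^T\bigl(F(\bx_t)-F(\bx_{t+1})\bigr)=F(\bx_1)-F(\bx_{T+1})\le F(\bx_1)-F_*=\Delta_1$ by Assumption \ref{assu:lb}, and take expectations (legitimate since $\eta_t$ is deterministic), which produces exactly the claimed inequality. I do not anticipate a genuine obstacle: this is the standard normalized-momentum descent estimate, and the only points needing care are the verification of the step-size constraint $\eta_t\le 1/L_1$ that licenses Lemma \ref{lem:smooth-ineq} and the treatment of the $\bm_t=\bzero$ edge case, both of which are immediate.
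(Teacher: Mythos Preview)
Your proposal is correct and follows essentially the same approach as the paper's proof: apply Lemma~\ref{lem:smooth-ineq} after checking $\|\bx_{t+1}-\bx_t\|\le\eta_t\le 1/L_1$, use the decomposition $\nabla F(\bx_t)=\bm_t-\bep_t$ together with Cauchy--Schwarz and the reverse triangle inequality to get the one-step bound, then sum, telescope, and take expectations. If anything, you are slightly more careful than the paper in explicitly treating the degenerate case $\bm_t=\bzero$.
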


\begin{proof}
Note that $\left\Vert \bx_{t+1}-\bx_{t}\right\Vert =\left\Vert \eta_{t}\frac{\bm{_{t}}}{\left\Vert \bm{_{t}}\right\Vert }\right\Vert \leq\eta_{t}\leq\frac{1}{L_{1}}$
now, we then invoke Lemma (\ref{lem:smooth-ineq}) to obtain for any
$t\in\left[T\right]$,
\begin{align}
F(\bx_{t+1}) & \leq F(\bx_{t})+\left\langle \nabla F(\bx_{t}),\bx_{t+1}-\bx_{t}\right\rangle +\frac{L_{0}+L_{1}\left\Vert \nabla F(\bx_{t})\right\Vert }{2}\left\Vert \bx_{t+1}-\bx_{t}\right\Vert ^{2}\nonumber \\
 & =F(\bx_{t})-\eta_{t}\left\langle \nabla F(\bx_{t}),\nicefrac{\bm{_{t}}}{\left\Vert \bm{_{t}}\right\Vert }\right\rangle +\frac{L_{0}+L_{1}\left\Vert \nabla F(\bx_{t})\right\Vert }{2}\eta_{t}^{2}\nonumber \\
 & \overset{\eqref{eq:def-eps}}{=}F(\bx_{t})-\eta_{t}\left\Vert \bm{_{t}}\right\Vert +\eta_{t}\left\langle \bep_{t},\nicefrac{\bm{_{t}}}{\left\Vert \bm{_{t}}\right\Vert }\right\rangle +\frac{L_{0}+L_{1}\left\Vert \nabla F(\bx_{t})\right\Vert }{2}\eta_{t}^{2}\nonumber \\
 & \overset{(a)}{\leq}F(\bx_{t})-\eta_{t}\left\Vert \bm{_{t}}\right\Vert +\eta_{t}\left\Vert \bep_{t}\right\Vert +\frac{L_{0}+L_{1}\left\Vert \nabla F(\bx_{t})\right\Vert }{2}\eta_{t}^{2}\nonumber \\
 & \overset{(b)}{\leq}F(\bx_{t})-\eta_{t}\left\Vert \nabla F(\bx_{t})\right\Vert +2\eta_{t}\left\Vert \bep_{t}\right\Vert +\frac{L_{0}+L_{1}\left\Vert \nabla F(\bx_{t})\right\Vert }{2}\eta_{t}^{2},\label{eq:descent-lemma-1}
\end{align}
where $(a)$ is by $\left\langle \bep_{t},\nicefrac{\bm{_{t}}}{\left\Vert \bm{_{t}}\right\Vert }\right\rangle \leq\left\Vert \bep_{t}\right\Vert \left\Vert \nicefrac{\bm{_{t}}}{\left\Vert \bm{_{t}}\right\Vert }\right\Vert =\left\Vert \bep_{t}\right\Vert $
and $(b)$ is due to $\left\Vert \bm{_{t}}\right\Vert \overset{\eqref{eq:def-eps}}{=}\left\Vert \nabla F(\bx_{t})+\bep_{t}\right\Vert \geq\left\Vert \nabla F(\bx_{t})\right\Vert -\left\Vert \bep_{t}\right\Vert $.
Taking expectations on both sides of (\ref{eq:descent-lemma-1}) and
summing up from $t=1$ to $T$, we have
\[
\E\left[F(\bx_{T+1})\right]\leq F(\bx_{1})-\sum_{t=1}^{T}\eta_{t}\E\left[\left\Vert \nabla F(\bx_{t})\right\Vert \right]+\sum_{t=1}^{T}2\eta_{t}\E\left[\left\Vert \bep_{t}\right\Vert \right]+\sum_{t=1}^{T}\frac{L_{0}+L_{1}\E\left[\left\Vert \nabla F(\bx_{t})\right\Vert \right]}{2}\eta_{t}^{2}.
\]
Finally, we rearrange the terms, apply $\E\left[F(\bx_{T+1})\right]\geq F_{*}$
due to Assumption \ref{assu:lb} and use $\Delta_{1}=F(\bx_{1})-F_{*}$
to get the desired result.
\end{proof}

\begin{lem}
\label{lem:err-bound}(Full version of Lemma \ref{lem:main-err-bound})
Under Assumptions \ref{assu:smooth}, \ref{assu:unbias} and \ref{assu:general-heavy-noise},
if $\eta_{t}\leq\frac{1}{L_{1}},\forall t\in\left[T\right]$, then
Algorithm \ref{alg:NSGDM} guarantees
\begin{align*}
\E\left[\left\Vert \bep_{t}\right\Vert \right]\leq & \frac{2\sqrt{2}}{B^{\frac{\p-1}{\p}}}\left[\beta_{1:t}\left(\sigma_{0}+\sigma_{1}\left\Vert \nabla F(\bx_{1})\right\Vert \right)+\left(\sum_{s=1}^{t}(1-\beta_{s})^{\p}(\beta_{s+1:t})^{\p}\right)^{\frac{1}{\p}}\sigma_{0}\right]\\
 & +\sum_{s=2}^{t}\beta_{s:t}\left(L_{0}+L_{1}\E\left[\left\Vert \nabla F(\bx_{s-1})\right\Vert \right]\right)\eta_{s-1}\\
 & +\frac{2\sqrt{2}}{B^{\frac{\p-1}{\p}}}\sum_{s=1}^{t}(1-\beta_{s})\beta_{s+1:t}\sigma_{1}\E\left[\left\Vert \nabla F(\bx_{s})\right\Vert \right],\forall t\in\left[T\right].
\end{align*}
\end{lem}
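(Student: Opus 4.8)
The plan is to start from the batched momentum‑error identity of Lemma~\ref{lem:main-decomposition} together with $\bxi_s=\tfrac1B\sum_{i=1}^B\bxi_s^i$ and $\bep_0=\bg_1-\nabla F(\bx_1)=\tfrac1B\sum_{i=1}^B\bxi_1^i$, take norms and expectations, apply the triangle inequality, and bound the three resulting pieces separately. The drift piece is routine: $\E\bigl[\|\sum_s\beta_{s:t}\bD_s\|\bigr]\le\sum_{s\ge2}\beta_{s:t}\E[\|\bD_s\|]$, and since the update is normalized we have $\|\bx_s-\bx_{s-1}\|\le\eta_{s-1}\le 1/L_1$, so Assumption~\ref{assu:smooth} gives $\|\bD_s\|\le(L_0+L_1\|\nabla F(\bx_{s-1})\|)\eta_{s-1}$, producing exactly the middle term of the lemma. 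The initialization piece uses that $\tfrac1B\sum_i\bxi_1^i$ is a sum of conditionally independent mean‑zero vectors, so Lemma~\ref{lem:main-core} (with $T=B$ and exponent $\p$) gives $\E[\|\bep_0\|]\le\tfrac{2\sqrt2}{B}\E\bigl[(\sum_i\|\bxi_1^i\|^\p)^{1/\p}\bigr]$; because $\bx_1$ is deterministic, concavity of $t\mapsto t^{1/\p}$ and Assumption~\ref{assu:general-heavy-noise} collapse this to $\tfrac{2\sqrt2}{B^{(\p-1)/\p}}(\sigma_0+\sigma_1\|\nabla F(\bx_1)\|)$, and multiplying by $\beta_{1:t}$ gives the first term.

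The core of the argument is the noise piece $\tfrac1B\E\bigl[\|\sum_{s,i}(1-\beta_s)\beta_{s+1:t}\bxi_s^i\|\bigr]$. Flattening the index $(s,i)$ in lexicographic order, the vectors $(1-\beta_s)\beta_{s+1:t}\bxi_s^i$ form a vector‑valued martingale difference sequence with respect to $\F_{s-1}\vee\sigma(\bxi_s^1,\dots,\bxi_s^i)$ (by Assumption~\ref{assu:unbias}), so Lemma~\ref{lem:main-core} with exponent $\p$ yields
\[
\frac1B\,\E\!\left[\Bigl\|\sum_{s,i}(1-\beta_s)\beta_{s+1:t}\bxi_s^i\Bigr\|\right]\le\frac{2\sqrt2}{B}\,\E\!\left[\Bigl(\sum_{s,i}c_s\,\|\bxi_s^i\|^\p\Bigr)^{1/\p}\right],\qquad c_s:=\bigl((1-\beta_s)\beta_{s+1:t}\bigr)^\p .
\]
It then remains to bound $\E\bigl[(\sum_{s,i}c_s\|\bxi_s^i\|^\p)^{1/\p}\bigr]$ so that the $\sigma_0$‑contribution keeps its $\ell_\p$‑in‑time form — which is what later produces the optimal rate — while the $\sigma_1$‑contribution is allowed to degenerate to an $\ell_1$‑in‑time sum of $\E[\|\nabla F(\bx_s)\|]$, which is harmless since it gets absorbed into the left side of Lemma~\ref{lem:descent-lemma} once $B$ is taken large enough. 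The plan here is to keep the expectation on the outside and repeatedly condition, using the tower property together with concavity of $t\mapsto t^{1/\p}$ and Assumption~\ref{assu:general-heavy-noise} to replace $\|\bxi_s^i\|^\p$ by its predictable envelope $\sigma_0^\p+\sigma_1^\p\|\nabla F(\bx_s)\|^\p$, and then the subadditivity $(u+v)^{1/\p}\le u^{1/\p}+v^{1/\p}$ to peel off the deterministic term $\bigl(B\sum_s c_s\bigr)^{1/\p}\sigma_0=B^{1/\p}\bigl(\sum_s((1-\beta_s)\beta_{s+1:t})^\p\bigr)^{1/\p}\sigma_0$ and to bound the residue by $B^{1/\p}\sigma_1\sum_s c_s^{1/\p}\E[\|\nabla F(\bx_s)\|]=B^{1/\p}\sigma_1\sum_s(1-\beta_s)\beta_{s+1:t}\E[\|\nabla F(\bx_s)\|]$, again applying subadditivity pointwise before taking $\E$. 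Multiplying through by $\tfrac{2\sqrt2}{B}$ turns each $B^{1/\p}$ prefactor into $\tfrac{2\sqrt2}{B^{(\p-1)/\p}}$ and delivers exactly the $\sigma_0$‑term in the first bracket and the last term of the lemma; collecting the three pieces finishes the proof.

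I expect the main obstacle to be precisely this last maneuver under affine noise ($\sigma_1>0$). One is not allowed to push $\E$ inside the $(\cdot)^{1/\p}$ by H\"older, because that would leave $\E[\|\nabla F(\bx_s)\|^\p]^{1/\p}$, a quantity that cannot be upper bounded by $\E[\|\nabla F(\bx_s)\|]$ and would derail the eventual recursion with Lemma~\ref{lem:descent-lemma}; hence the splitting must be carried out with the expectation kept outside so that the $\sigma_1$‑part emerges linearly in $\E[\|\nabla F(\bx_s)\|]$, and some care is needed in ordering the conditioning since the envelope $\|\nabla F(\bx_s)\|$ is itself correlated with the later noise terms. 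This is exactly the point that distinguishes the proof of Lemma~\ref{lem:err-bound} from the $\sigma_1=0$ case (Lemma~\ref{lem:main-err-bound}), where Jensen could be applied freely.
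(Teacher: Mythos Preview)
Your proposal is correct and follows essentially the same route as the paper: the decomposition via Lemma~\ref{lem:main-decomposition}, the drift bound through Assumption~\ref{assu:smooth} and $\|\bx_s-\bx_{s-1}\|\le\eta_{s-1}$, the application of Lemma~\ref{lem:main-core} to the batch for $\bep_0$ and to the flattened $(s,i)$--indexed MDS for the noise piece, and the backward recursive conditioning (Jensen $+$ Assumption~\ref{assu:general-heavy-noise} $+$ subadditivity peel-off of the $\sigma_1$ contribution at each step) all coincide with the paper's argument. Your closing remark that the peel-off must occur inside each conditioning step---so that the $\sigma_1$ part emerges as $\E[\|\nabla F(\bx_s)\|]$ rather than $\E[\|\nabla F(\bx_s)\|^{\p}]^{1/\p}$---is exactly the point the paper emphasizes; just make sure when you write it up that the subadditivity split is interleaved with the conditioning (one time index at a time, starting from $s=t$ and working down), not applied once at the end after all envelopes have been substituted, since the latter would re-introduce the unwanted $\p$-th moment of the gradient.
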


\begin{proof}
Based on Lemma \ref{lem:main-decomposition}, we know for any $t\in\left[T\right]$,
\begin{align}
\left\Vert \bep_{t}\right\Vert  & \leq\beta_{1:t}\left\Vert \bep_{0}\right\Vert +\left\Vert \sum_{s=1}^{t}\beta_{s:t}\bD_{s}\right\Vert +\left\Vert \sum_{s=1}^{t}(1-\beta_{s})\beta_{s+1:t}\bxi_{s}\right\Vert \nonumber \\
\Rightarrow\E\left[\left\Vert \bep_{t}\right\Vert \right] & \leq\beta_{1:t}\E\left[\left\Vert \bep_{0}\right\Vert \right]+\E\left[\left\Vert \sum_{s=1}^{t}\beta_{s:t}\bD_{s}\right\Vert \right]+\E\left[\left\Vert \sum_{s=1}^{t}(1-\beta_{s})\beta_{s+1:t}\bxi_{s}\right\Vert \right].\label{eq:err-bound-1}
\end{align}

First, by the definition of $\bep_{0}\overset{\eqref{eq:def-eps}}{=}\bm{_{0}}-\nabla F(\bx_{1})=\bg_{1}-\nabla F(\bx_{1})\overset{\eqref{eq:def-xi-i}}{=}\frac{1}{B}\sum_{i=1}^{B}\bxi_{1}^{i}$,
we have
\begin{align}
\E\left[\left\Vert \bep_{0}\right\Vert \right] & =\frac{1}{B}\E\left[\left\Vert \sum_{i=1}^{B}\bxi_{1}^{i}\right\Vert \right]\overset{(a)}{\leq}\frac{2\sqrt{2}}{B}\E\left[\left(\sum_{i=1}^{B}\left\Vert \bxi_{1}^{i}\right\Vert ^{\p}\right)^{\frac{1}{\p}}\right]\nonumber \\
 & \overset{(b)}{\leq}\frac{2\sqrt{2}}{B}\left(\sum_{i=1}^{B}\E\left[\left\Vert \bxi_{1}^{i}\right\Vert ^{\p}\right]\right)^{\frac{1}{\p}}\overset{\text{Assumption }\ref{assu:general-heavy-noise}}{\leq}\frac{2\sqrt{2}}{B^{\frac{\p-1}{\p}}}\left(\sigma_{0}^{\p}+\sigma_{1}^{\p}\left\Vert \nabla F(\bx_{1})\right\Vert ^{\p}\right)^{\frac{1}{\p}}\nonumber \\
 & \overset{(c)}{\leq}\frac{2\sqrt{2}}{B^{\frac{\p-1}{\p}}}(\sigma_{0}+\sigma_{1}\left\Vert \nabla F(\bx_{1})\right\Vert ),\label{eq:err-bound-2}
\end{align}
where $(a)$ is by applying Lemma \ref{lem:main-core} with $\bv_{i}\defeq\bxi_{1}^{i},\forall i\in\left[B\right]$,
$(b)$ is due to H\"{o}lder's inequality, and $(c)$ is because of
$(x+y)^{\frac{1}{\p}}\leq x^{\frac{1}{\p}}+y^{\frac{1}{\p}}$ when
$\p\geq1$.

Next, we know 
\begin{align*}
\left\Vert \sum_{s=1}^{t}\beta_{s:t}\bD_{s}\right\Vert  & \leq\sum_{s=1}^{t}\beta_{s:t}\left\Vert \bD_{s}\right\Vert \overset{\eqref{eq:def-D}}{=}\sum_{s=1}^{t}\beta_{s:t}\left\Vert \nabla F(\bx_{s-1})-\nabla F(\bx_{s})\right\Vert \1_{s\geq2}\\
 & \overset{\text{Assumption }\ref{assu:smooth}}{\leq}\sum_{s=1}^{t}\beta_{s:t}\left(L_{0}+L_{1}\left\Vert \nabla F(\bx_{s-1})\right\Vert \right)\left\Vert \bx_{s}-\bx_{s-1}\right\Vert \1_{s\geq2}\\
 & \overset{(d)}{\leq}\sum_{s=1}^{t}\beta_{s:t}\left(L_{0}+L_{1}\left\Vert \nabla F(\bx_{s-1})\right\Vert \right)\eta_{s-1}\1_{s\geq2}\\
 & =\sum_{s=2}^{t}\beta_{s:t}\left(L_{0}+L_{1}\left\Vert \nabla F(\bx_{s-1})\right\Vert \right)\eta_{s-1},
\end{align*}
where $(d)$ is by $\left\Vert \bx_{s}-\bx_{s-1}\right\Vert =\left\Vert \eta_{s-1}\frac{\bm{_{s-1}}}{\left\Vert \bm{_{s-1}}\right\Vert }\right\Vert \leq\eta_{s-1}$
from the update rule of Algorithm \ref{alg:NSGDM}. Therefore, we
have
\begin{equation}
\E\left[\left\Vert \sum_{s=1}^{t}\beta_{s:t}\bD_{s}\right\Vert \right]\leq\sum_{s=2}^{t}\beta_{s:t}\left(L_{0}+L_{1}\E\left[\left\Vert \nabla F(\bx_{s-1})\right\Vert \right]\right)\eta_{s-1}.\label{eq:err-bound-3}
\end{equation}

Moreover, let $\bv_{(s-1)B+i}\defeq(1-\beta_{s})\beta_{s+1:t}\bxi_{s}^{i},\forall i\in\left[B\right],s\in\left[t\right]$
and note that this sequence satisfies the requirement of Lemma \ref{lem:main-core},
then there is
\begin{align}
\E\left[\left\Vert \sum_{s=1}^{t}(1-\beta_{s})\beta_{s+1:t}\bxi_{s}\right\Vert \right] & \overset{\eqref{eq:def-xi}}{=}\frac{1}{B}\E\left[\left\Vert \sum_{s=1}^{t}\sum_{i=1}^{B}(1-\beta_{s})\beta_{s+1:t}\bxi_{s}^{i}\right\Vert \right]=\frac{1}{B}\E\left[\left\Vert \sum_{s=1}^{t}\sum_{i=1}^{B}\bv_{(s-1)B+i}\right\Vert \right]\nonumber \\
 & \leq\frac{2\sqrt{2}}{B}\E\left[\left(\sum_{s=1}^{t}\sum_{i=1}^{B}\left\Vert \bv_{(s-1)B+i}\right\Vert ^{\p}\right)^{\frac{1}{\p}}\right]\nonumber \\
 & =\frac{2\sqrt{2}}{B}\E\left[\left(\sum_{s=1}^{t}\sum_{i=1}^{B}(1-\beta_{s})^{\p}(\beta_{s+1:t})^{\p}\left\Vert \bxi_{s}^{i}\right\Vert ^{\p}\right)^{\frac{1}{\p}}\right].\label{eq:err-bound-4}
\end{align}
Observe that
\begin{align}
 & \E\left[\left(\sum_{s=1}^{t}\sum_{i=1}^{B}(1-\beta_{s})^{\p}(\beta_{s+1:t})^{\p}\left\Vert \bxi_{s}^{i}\right\Vert ^{\p}\right)^{\frac{1}{\p}}\mid\F_{t-1}\right]\nonumber \\
\overset{(e)}{\leq} & \left(\E\left[\sum_{s=1}^{t}\sum_{i=1}^{B}(1-\beta_{s})^{\p}(\beta_{s+1:t})^{\p}\left\Vert \bxi_{s}^{i}\right\Vert ^{\p}\right]\mid\F_{t-1}\right)^{\frac{1}{\p}}\nonumber \\
= & \left(\E\left[\sum_{i=1}^{B}(1-\beta_{t})^{\p}(\beta_{t+1:t})^{\p}\left\Vert \bxi_{t}^{i}\right\Vert ^{\p}\mid\F_{t-1}\right]+\sum_{s=1}^{t-1}\sum_{i=1}^{B}(1-\beta_{s})^{\p}(\beta_{s+1:t})^{\p}\left\Vert \bxi_{s}^{i}\right\Vert ^{\p}\right)^{\frac{1}{\p}}\nonumber \\
\overset{\text{Assumption }\ref{assu:general-heavy-noise}}{\leq} & \left(B(1-\beta_{t})^{\p}(\beta_{t+1:t})^{\p}(\sigma_{0}^{\p}+\sigma_{1}^{\p}\left\Vert \nabla F(\bx_{t})\right\Vert ^{\p})+\sum_{s=1}^{t-1}\sum_{i=1}^{B}(1-\beta_{s})^{\p}(\beta_{s+1:t})^{\p}\left\Vert \bxi_{s}^{i}\right\Vert ^{\p}\right)^{\frac{1}{\p}}\nonumber \\
\leq & \left(B(1-\beta_{t})^{\p}(\beta_{t+1:t})^{\p}\sigma_{0}^{\p}+\sum_{s=1}^{t-1}\sum_{i=1}^{B}(1-\beta_{s})^{\p}(\beta_{s+1:t})^{\p}\left\Vert \bxi_{s}^{i}\right\Vert ^{\p}\right)^{\frac{1}{\p}}\nonumber \\
 & +B^{\frac{1}{\p}}(1-\beta_{t})\beta_{t+1:t}\sigma_{1}\left\Vert \nabla F(\bx_{t})\right\Vert ,\label{eq:err-bound-5}
\end{align}
where $(e)$ is by H\"{o}lder's inequality. Taking expectations on
both sides of (\ref{eq:err-bound-5}) to get
\begin{align*}
 & \E\left[\left(\sum_{s=1}^{t}\sum_{i=1}^{B}(1-\beta_{s})^{\p}(\beta_{s+1:t})^{\p}\left\Vert \bxi_{s}^{i}\right\Vert ^{\p}\right)^{\frac{1}{\p}}\right]\\
\leq & \E\left[\left(B(1-\beta_{t})^{\p}(\beta_{t+1:t})^{\p}\sigma_{0}^{\p}+\sum_{s=1}^{t-1}\sum_{i=1}^{B}(1-\beta_{s})^{\p}(\beta_{s+1:t})^{\p}\left\Vert \bxi_{s}^{i}\right\Vert ^{\p}\right)^{\frac{1}{\p}}\right]\\
 & +B^{\frac{1}{\p}}(1-\beta_{t})\beta_{t+1:t}\sigma_{1}\E\left[\left\Vert \nabla F(\bx_{t})\right\Vert \right].
\end{align*}
Recursively applying the above argument from $\F_{t-2}$ to $\F_{0}$,
we can finally obtain
\begin{align}
 & \E\left[\left(\sum_{s=1}^{t}\sum_{i=1}^{B}(1-\beta_{s})^{\p}(\beta_{s+1:t})^{\p}\left\Vert \bxi_{s}^{i}\right\Vert ^{\p}\right)^{\frac{1}{\p}}\right]\nonumber \\
\leq & B^{\frac{1}{\p}}\left(\sum_{s=1}^{t}(1-\beta_{s})^{\p}(\beta_{s+1:t})^{\p}\right)^{\frac{1}{\p}}\sigma_{0}+B^{\frac{1}{\p}}\sum_{s=1}^{t}(1-\beta_{s})\beta_{s+1:t}\sigma_{1}\E\left[\left\Vert \nabla F(\bx_{s})\right\Vert \right],\label{eq:err-bound-6}
\end{align}
which gives us
\begin{align}
 & \E\left[\left\Vert \sum_{s=1}^{t}(1-\beta_{s})\beta_{s+1:t}\bxi_{s}\right\Vert \right]\nonumber \\
\overset{\eqref{eq:err-bound-4}}{\leq} & \frac{2\sqrt{2}}{B}\E\left[\left(\sum_{s=1}^{t}\sum_{i=1}^{B}(1-\beta_{s})^{\p}(\beta_{s+1:t})^{\p}\left\Vert \bxi_{s}^{i}\right\Vert ^{\p}\right)^{\frac{1}{\p}}\right]\nonumber \\
\overset{\eqref{eq:err-bound-6}}{\leq} & \frac{2\sqrt{2}}{B^{\frac{\p-1}{\p}}}\left(\sum_{s=1}^{t}(1-\beta_{s})^{\p}(\beta_{s+1:t})^{\p}\right)^{\frac{1}{\p}}\sigma_{0}+\frac{2\sqrt{2}}{B^{\frac{\p-1}{\p}}}\sum_{s=1}^{t}(1-\beta_{s})\beta_{s+1:t}\sigma_{1}\E\left[\left\Vert \nabla F(\bx_{s})\right\Vert \right].\label{eq:err-bound-7}
\end{align}

Combining (\ref{eq:err-bound-1}), (\ref{eq:err-bound-2}), (\ref{eq:err-bound-3})
and (\ref{eq:err-bound-7}), we finally obtain for any $t\in\left[T\right]$,
\begin{align*}
\E\left[\left\Vert \bep_{t}\right\Vert \right]\leq & \frac{2\sqrt{2}}{B^{\frac{\p-1}{\p}}}\left[\beta_{1:t}\left(\sigma_{0}+\sigma_{1}\left\Vert \nabla F(\bx_{1})\right\Vert \right)+\left(\sum_{s=1}^{t}(1-\beta_{s})^{\p}(\beta_{s+1:t})^{\p}\right)^{\frac{1}{\p}}\sigma_{0}\right]\\
 & +\sum_{s=2}^{t}\beta_{s:t}\left(L_{0}+L_{1}\E\left[\left\Vert \nabla F(\bx_{s-1})\right\Vert \right]\right)\eta_{s-1}\\
 & +\frac{2\sqrt{2}}{B^{\frac{\p-1}{\p}}}\sum_{s=1}^{t}(1-\beta_{s})\beta_{s+1:t}\sigma_{1}\E\left[\left\Vert \nabla F(\bx_{s})\right\Vert \right].
\end{align*}
\end{proof}

\end{document}